\documentclass[a4paper]{article}

\usepackage[width=125mm]{geometry}
\usepackage{graphicx}
\usepackage[utf8]{inputenc}
\usepackage{amsmath, amsthm, amssymb, amsfonts,mathtools}

\usepackage{tikz-cd}

\usepackage{fourier}
\usepackage[french,english]{babel}
\usepackage{microtype}
\brokenpenalty10000\relax

\usepackage[backend=bibtex,style=alphabetic]{biblatex}

\newtheorem{theorem}{Theorem}[section]
\newtheorem{lemma}[theorem]{Lemma}
\newtheorem{proposition}[theorem]{Proposition}
\newtheorem*{proposition*}{Proposition}
\newtheorem*{theorem*}{Theorem}
\newtheorem{corollary}[theorem]{Corollary}
\newtheorem{definition}[theorem]{Definition}

\title{Geodesic convexity and  closed nilpotent similarity manifolds}

\author{Raphaël V.~{\sc Alexandre}\footnote{Institut de Math\'ematiques de Jussieu-Paris Rive Gauche, Sorbonne Université, 4 Place Jussieu, 75252 Paris Cédex, France. Email address: {\tt raphael.alexandre@imj-prg.fr}.}}

\AtBeginDocument{\maketitle}

\newcommand{\rT}{{\rm T}}
\newcommand{\cN}{{\mathcal N}}
\newcommand{\iI}{\mathopen{[}0\,,1\mathclose{]}}
\newcommand{\intv}[2]{\mathopen{[}#1\,,#2\mathclose{]}}
\newcommand{\eps}{\varepsilon}
\newcommand\R{\mathbf{R}}
\DeclareMathOperator{\Sim}{Sim}
\DeclareMathOperator\dd{d}
\DeclareMathOperator{\ad}{ad}
\DeclareMathOperator{\Ad}{Ad}

\DeclareMathOperator{\SO}{SO}
\DeclareMathOperator{\Aff}{Aff}
\DeclareMathOperator{\Aut}{Aut}

\DeclareMathOperator{\rO}{O}


\bibliography{ref.bib}

\begin{document}

\begin{abstract}
Some nilpotent Lie groups possess a transformation group analogous to the similarity group acting on the Euclidean space. We call such a pair a nilpotent similarity structure. It is notably the case for all Carnot groups and their dilatations.
We generalize a  theorem of Fried:  closed manifolds with  a nilpotent similarity structure are either complete or radiant and, in the latter case, complete for the structure of the space deprived of a point. 
The proof relies on a  generalization of convexity arguments in a setting where, in the coordinates given by the Lie algebra, we study geodesic segments instead of linear segments. 
We show  classic consequences for closed manifolds with a geometry modeled on the boundary  of a rank one symmetric space.
\end{abstract}

\section{Introduction}

Let $\Sim(\R^n)$ be the group of similarities of $\R^n$. That is to say, an element of $\Sim(\R^n)$ is of the form $x\mapsto\lambda P(x)+c$ where $\lambda>0$ is  a dilatation factor, $P\in \rO(n)$ is a rotation and $c\in \R^n$ describes a translation. Similarity manifolds are smooth manifolds equipped with an atlas of charts with values in $\R^n$ and transition maps in $\Sim(\R^n)$.

 More generally, a \emph{nilpotent similarity} group of transformations, $\Sim(\cN)$, acting on a nilpotent Lie group $\cN$, is described by the transformations of the form $x\mapsto \lambda P(x)+_\cN c$, where the addition $+_\cN$ is the group law of $\cN$, $\lambda>0$ is again a dilatation that can be seen through a coordinate system of the nilpotent Lie algebra  as $\lambda(x_1,\dots,x_n) = (\lambda^{d_1}x_1,\dots,\lambda^{d_n}x_n)$ with $d_i\geq 1$, $P$ is a rotation for the choice of the Euclidean metric associated to the coordinate system $(x_1,\dots,x_n)$ and $c\in \cN$ is again representing a translation. This larger class of geometries includes for example Carnot groups with their dilatations.

Fried~\cite{Fried} showed that every closed (real) similarity manifold is either Euclidean or radiant and therefore covered by a Hopf manifold. 
In other terms, if a closed similarity manifold is not complete, then it is radiant (the holonomy fixes a point and the developing map avoids this point).
It was also proven by alternative analytic methods in an independent work of Vaisman and Reischer \cite{Vaisman}. A generalization has been made for the Heisenberg group  by Miner \cite{Miner}.

The main obstruction to generalize Fried's theorem for any nilpotent similarity structure  is the need for a generalized version of convexity and geodesic structures. 
Convexity arguments are used in Fried's  proof \cite{Fried}, but not in a very explicit form. When Miner generalized the result for the Heisenberg group \cite{Miner}, he pointed out the convexity arguments in use. He attributed them to Carrière  and to Fried. Indeed, Carrière \cite{Carriere} used such convexity arguments, and refered to Fried's article but also to Koszul \cite{Koszul}, Benzécri \cite{Benzecri} and Kobayashi \cite{Kobayashi}. In Miner's article \cite{Miner} the convexity arguments rely on the fact that geodesics in the Heisenberg group are straight lines, and therefore a  classic point of view on convexity remains: it is the property of containing interior straight lines. But with a general nilpotent space (i.e. with a nilpotent rank at least three), it is no longer true that geodesics are straight lines, and therefore a generalization of convexity arguments  makes sense.

We  understand here by “geodesics” a class of curves parametrized by the tangent space and invariant by left-translation. The most natural class of such geodesics in a Lie group is given by the integral lines of left-invariant vector fields. 
In Fried's and Miner's proofs, it is a major requirement that such a class of curves and that special subsets (convex subsets) are available.

We will say that a
 geodesic structure $(X,\exp)$ is the data of a smooth manifold $X$ together with an exponential map $\exp\colon \rT X \to X$, which verifies some additional hypotheses. Such an  exponential map might not come from a Riemannian structure. The pair $(X,\exp)$ is thought to be a geometric model where geodesics are fully defined on $\R$.
On a smooth manifold $M$, a $(X,\exp)$-structure will be the additional data of a local diffeomorphism from the universal cover, $\widetilde M$, of $M$ to $X$, called a developing map. It gives  a sense of what geodesics are on $\widetilde M$ (they are curves which are developed into geodesics of $X$) but might not be fully defined on $\R$.
At a point $p\in \widetilde M$ there is a subset $V_p\subset \rT_p \widetilde M$, called the visible set from $p$. Its elements are the vectors that can be taken as initial speeds of (fully defined) geodesic segments on $\widetilde M$. 

Geodesic convexity in $X$ will be the property for a subset to contain a geodesic segment for each pair of points, together with a stability property (a sequence of geodesic segments based at a fixed point and with a converging endpoint gives again a geodesic segment).  In $\widetilde M$, the convexity is for a subset to be injectively developed into a convex subset of $X$.

\paragraph{}
One of  the classic results that is generalized in section \ref{sec-2} will be the following.

\begin{theorem*}[\ref{thm-convexity}]
Let $M$ be a connected $(X,\exp)$-geodesic manifold.
The following properties are equivalent.
\begin{enumerate}
\item The developing map $D\colon\widetilde M \to X$ is a diffeomorphism.
\item For all $p\in \widetilde M$, the subset $V_p$ is convex and equal to $\rT_p\widetilde M$.
\item There exists $p\in \widetilde M$ such that $V_p$ is convex and equal to $\rT_p\widetilde M$.
\end{enumerate}
\end{theorem*}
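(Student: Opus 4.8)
The plan is to run the cycle $(1)\Rightarrow(2)\Rightarrow(3)\Rightarrow(1)$, with all the substance concentrated in the last arrow. The implication $(2)\Rightarrow(3)$ is immediate, since $M$ is connected and hence $\widetilde M\neq\emptyset$, so one may take for $p$ any point. For $(1)\Rightarrow(2)$ I would argue by transport of structure: a diffeomorphism $D$ carries geodesics of $\widetilde M$ bijectively onto geodesics of $X$, and these are defined on all of $\R$; hence every $v\in\rT_p\widetilde M$ is the initial speed of a complete geodesic segment and $V_p=\rT_p\widetilde M$, while the convexity of $V_p$ follows because $D$, composed with the exponentials, develops $\widetilde M$ isomorphically onto the convex model $X$.

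All the content lies in $(3)\Rightarrow(1)$. Fix $p$ with $V_p=\rT_p\widetilde M$ convex and put $q=D(p)$. The key is the intertwining relation
\[
D\circ\exp_p=\exp_q\circ\,\dd D_p\qquad\text{on }\rT_p\widetilde M,
\]
valid because $D$ sends the geodesic $t\mapsto\exp_p(tv)$ to the geodesic of $X$ issued from $q$ with initial speed $\dd D_p(v)$, and such geodesics are determined by their initial data. Writing $\Phi:=\exp_q\circ\dd D_p$, and using that the convexity of the model makes $\exp_q\colon\rT_qX\to X$ a diffeomorphism (every point of $X$ is joined to $q$ by a unique complete geodesic), $\Phi$ is a diffeomorphism $\rT_p\widetilde M\to X$. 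The identity $D\circ\exp_p=\Phi$ then reduces everything to proving that $\exp_p\colon\rT_p\widetilde M\to\widetilde M$ is a diffeomorphism.

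I would establish this by showing $\exp_p$ is a bijective local diffeomorphism. It is a local diffeomorphism because locally $\exp_p=D^{-1}\circ\Phi$, so its image is open; by connectedness of $\widetilde M$ it then suffices to see the image is closed. If $\exp_p(v_n)\to x$, then $\Phi(v_n)=D(\exp_p(v_n))\to D(x)$, and since $\Phi$ is a homeomorphism one gets $v_n\to v:=\Phi^{-1}(D(x))$; because $V_p=\rT_p\widetilde M$ the map $\exp_p$ is defined and continuous on the whole tangent space, whence $x=\lim\exp_p(v_n)=\exp_p(v)$ lies in the image. Thus $\exp_p$ is onto and $\widetilde M=\exp_p(V_p)$. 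The convexity of $V_p$ says precisely that this set is injectively developed by $D$ into $X$, so $D$ is injective; being an injective local diffeomorphism with image $\Phi(\rT_p\widetilde M)=\exp_q(\rT_qX)=X$, it is a diffeomorphism, which is $(1)$.

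The step I expect to be the genuine obstacle is the closedness of the image of $\exp_p$, that is, the passage to the limit along a sequence of geodesic segments. In the clean form above it rests on $\Phi$ being a homeomorphism, and this is exactly where the stability clause of convexity in the model $X$ is essential; together with completeness $V_p=\rT_p\widetilde M$, which keeps the limiting vector $v$ visible, it produces the missing endpoint. The delicate bookkeeping I would carry out in full is to verify that the convexity of $X$ genuinely yields that each $\exp_q$ is a diffeomorphism, and that the two halves of the hypothesis on $V_p$—injective development and stability—map exactly onto, respectively, the injectivity of $D$ and the surjectivity of $\exp_p$.
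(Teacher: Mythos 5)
The skeleton of your argument ($(1)\Rightarrow(2)\Rightarrow(3)\Rightarrow(1)$ with all the work in the last arrow, reduced to showing $\exp_p(V_p)=\widetilde M$, then reading injectivity of $D$ off the convexity of $V_p$ and surjectivity off that of $\exp^X_{D(p)}$) matches the paper. But there is a genuine gap at the crux. You assert that convexity of the model makes $\exp_q\colon\rT_qX\to X$ a diffeomorphism because ``every point of $X$ is joined to $q$ by a unique complete geodesic.'' Nothing in definition \ref{def-geostruct} gives uniqueness: clause (a) of convexity only asserts the \emph{existence} of a geodesic segment between two points, and the paper explicitly warns that in general $\exp_x\colon\rT_xX\to X$ is neither injective nor a covering map --- uniqueness is precisely the extra ``injective structure'' hypothesis, introduced later and \emph{not} assumed in theorem \ref{thm-convexity} (the round sphere, where $\exp_q$ is far from injective yet $D=\id$ is a diffeomorphism, shows your claim is false). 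Worse, the definition only requires $\exp_x$ to be a local diffeomorphism \emph{around $0$}, so $\Phi=\exp_q\circ\dd D_p$ need not even be a local diffeomorphism away from the origin. Consequently your closedness argument --- ``$\Phi(v_n)\to D(x)$, and since $\Phi$ is a homeomorphism, $v_n\to\Phi^{-1}(D(x))$'' --- collapses: the $v_n$ could a priori escape to infinity in $\rT_p\widetilde M$ while $\Phi(v_n)$ converges, and even if they converged to some $v$ you could not identify $\exp_p(v)$ with $x$ without a local injectivity argument.

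The repair is the paper's intermediate proposition, which uses the stability clause (b) of convexity exactly where you hoped to invert $\exp_q$. Given $q\in\overline{\exp_p(V_p)}$, take a small convex open $C\ni q$ and $q_n\to q$ with $q_n\in C\cap\exp_p(V_p)$; by convexity of $D(\exp_p(V_p))$ in $X$, the geodesic segments from $D(p)$ to $D(q_n)$ admit a subsequence converging to a geodesic segment $\gamma$ ending at $D(q)$ with $\gamma(t)\in D(\exp_p(V_p))$ for $t<1$; lift $\gamma$ for $t<1$ and use the injectivity of $D$ on $\exp_p(V_p)\cap C$ (part of the convexity of $V_p$) to see that the lift accumulates at $q$ and hence extends to $t=1$. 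This yields $\exp_p(V_p)$ closed, hence equal to $\widetilde M$ by connectedness, without ever inverting $\exp_q$. The remainder of your proposal --- injectivity of $D$ on $\widetilde M=\exp_p(V_p)$ from the convexity of $V_p$, surjectivity from $D(\exp_p(\rT_p\widetilde M))=\exp^X_{D(p)}(\rT_{D(p)}X)=X$, and the two easy implications --- is correct and agrees with the paper.
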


This result should be compared with a more classic setting. If $(X,\exp)$ is the Euclidean space $\R^n$ together with its natural complete Riemannian structure, then it is well known that $D$ is a covering if, and only if, $V_p=\rT_p\widetilde M$. The injectivity of $D$ (i.e. when $D$ is a diffeomorphism) is then equivalent to the convexity of $V_p$ and we get the equivalence of the theorem. The same reasoning remains true if $\R^n$ is equipped with a complete pseudo-Riemannian structure. Note that in the classic case, geodesics are straight lines.

We will show that for some structures (that are said to be injective), in particular for the nilpotent similarity structures, $V_p=\rT_p\widetilde M$ implies that $V_p$ is convex.
Therefore, this theorem gives the first argument of Fried's theorem's proof: if the structure is not complete, that is to say $D$ is not a diffeomorphism, then at any point $p\in\widetilde M$, $V_p$ is not convex. 

This allows, as in the original version of Fried's proof, to find a maximal (geodesic convex) open ball in $V_p$, and to study its radius following $p$. The open balls of $\cN$, that are also geodesically convex,  are constructed in section \ref{sec-3}.  Their construction relies on a fine theorem of Hebisch and Sikora \cite{Hebisch}.

Section  \ref{sec-2} is also devoted to the other major convexity arguments. 
One of those convexity arguments that will be central is proposition \ref{prop-convG}. It allows to find vectors in $V_p$ with the initial data of a convex subset of $V_p$. In particular, with our previous open balls, the dynamic of an incomplete geodesic will describe $V_p$ more precisely: it will show that each $V_p$ is at least a half-space.

The last step, once we know that every $V_p$ is at least a half-space, consists of showing that the developing map will be a covering onto its image, and that the holonomy must be discrete. But a discrete holonomy implies the final result. This idea for the last argument is not due to Fried's original proof but appears in the survey of Matsumoto \cite{Matsumoto}.

Therefore, we will be able to prove the following generalization of Fried's theorem. The proof of theorem \ref{thm-nilpotent} will be given in section \ref{sec-4}.

\begin{theorem*}[\ref{thm-nilpotent}]
Let $M$ be a connected closed $(\Sim(\cN),\cN)$-manifold. If the developing map $D\colon\widetilde M \to \cN$ is not a diffeomorphism, then the holonomy group $\Gamma=\rho(\pi_1(M))$ fixes a point in $\cN$ and $D$ is in fact a covering onto the complement of this point.
\end{theorem*}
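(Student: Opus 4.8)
The plan is to follow the generalized Fried--Miner scheme outlined above, with Theorem~\ref{thm-convexity}, Proposition~\ref{prop-convG} and the geodesically convex balls of section~\ref{sec-3} as the main inputs. Write $\Omega := D(\widetilde M)\subseteq\cN$ for the developed image; since $D$ is an equivariant local diffeomorphism, $\Omega$ is a nonempty open $\Gamma$-invariant set whose boundary $\partial\Omega$ will carry all the relevant information. First I would record that incompleteness is ubiquitous: as $D$ is not a diffeomorphism, Theorem~\ref{thm-convexity} rules out any point at which $V_p$ is convex and equal to $\rT_p\widetilde M$, and the injectivity of the nilpotent similarity structure (whereby $V_p=\rT_p\widetilde M$ already forces $V_p$ convex) upgrades this to $V_p\subsetneq\rT_p\widetilde M$ for \emph{every} $p$. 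Equivalently, from each point some geodesic leaves $\widetilde M$ in finite time, so $\Omega\ne\cN$ and $\partial\Omega\ne\varnothing$.

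The core of the argument is to promote the strict inclusion $V_p\subsetneq\rT_p\widetilde M$ to the assertion that each $V_p$ contains a half-space. Here I would start from a maximal geodesically convex ball inside $V_p$ --- these are exactly the balls produced in section~\ref{sec-3} via the norm of Hebisch and Sikora~\cite{Hebisch} --- and follow an incomplete geodesic toward $\partial\Omega$. The dilation $\lambda P(\cdot)+_\cN c$ built into $\Sim(\cN)$ lets one rescale such a ball while keeping it geodesically convex, and Proposition~\ref{prop-convG} feeds the enlarged convex regions back into $V_p$; iterating and passing to the limit through the stability clause of geodesic convexity sweeps out a half-space of visible directions. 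I expect this to be the main obstacle, precisely because in a nilpotent group of rank at least three the geodesics are not straight lines, so the rescaling and the limiting step must be controlled with the generalized convexity of sections~\ref{sec-2}--\ref{sec-3} rather than with linear segments.

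Granting that every $V_p$ contains a half-space, I would next show, following Matsumoto~\cite{Matsumoto}, that $D\colon\widetilde M\to\Omega$ is a covering and that $\Gamma$ is discrete. Half-space visibility supplies enough extendability of geodesics to lift paths through $D$, which makes $D$ a covering onto $\Omega$; equivariance then identifies $M$ with $\Omega/\Gamma$, where $\Gamma$ acts freely and properly discontinuously, and in particular $\Gamma$ is discrete in $\Sim(\cN)$.

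It remains to locate a fixed point and to identify $\Omega$. Consider the dilation homomorphism $\lambda\colon\Gamma\to\R_{>0}$ sending $\lambda P(\cdot)+_\cN c$ to $\lambda$. If $\lambda\equiv 1$ the holonomy acts by isometries of the invariant metric while the model's geodesics remain complete, so the structure is complete by a standard argument and $V_p=\rT_p\widetilde M$, contradicting the first step; hence some $\gamma_0\in\Gamma$ has $\lambda(\gamma_0)\ne1$, and replacing it by its inverse if necessary it is a contraction with a unique fixed point $x_0\in\cN$. Because $\Gamma$ acts freely on $\Omega$, the contraction $\gamma_0$ fixes no point of $\Omega$, so $x_0\notin\Omega$; as $\gamma_0^n y\to x_0$ for every $y\in\Omega$, we get $x_0\in\partial\Omega$. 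Finally, using the half-space description of the visible sets together with the compactness of $M=\Omega/\Gamma$, I would show that $\partial\Omega$ cannot be larger than $\{x_0\}$; its $\Gamma$-invariance then makes $x_0$ a global fixed point of $\Gamma$ and yields $\Omega=\cN\setminus\{x_0\}$, so that $D$ is a covering onto $\cN\setminus\{x_0\}$, as claimed.
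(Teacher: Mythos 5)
Your outline follows the paper's overall scheme (incompleteness at every point, maximal geodesically convex balls, half-space visibility via Proposition \ref{prop-convG}, then covering, discreteness and a fixed point), but the two places where you defer the work are exactly where the proof lives, and as written neither step goes through. First, to apply Proposition \ref{prop-convG} you need elements $g\in\pi_1(M)$ with $gp\in\exp_p(B_p)$ whose holonomy is strongly contracting with almost no rotation; arbitrary dilations of $\Sim(\cN)$ are of no use, because only holonomy elements feed new visible vectors back into $V_p$. The paper manufactures these by projecting an incomplete geodesic to the compact $M$, extracting a recurrent point, and forming loops $g_{ij}$ from the return times; the estimates of Lemma \ref{lem-approxgeo}, controlled by the equivariant radius function $r$ and the invariant pseudo-distance $d_{\widetilde M}$, are what guarantee simultaneously $\lambda(g_{ij})\to 0$, $P(g_{ij})\to E$ and $g_{ij}p\in\exp_p(B_p)$. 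Without this recurrence construction, the ``rescale and iterate'' step has no group elements to iterate.

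Second, and more seriously, your final sentence --- that $\partial\Omega$ cannot be larger than $\{x_0\}$ --- is the crux of the theorem and is left unproved. The paper does not control $\partial\Omega$ directly: it splits $\partial H_p$ into visible and invisible directions, proves (Lemma \ref{lem-Invaff}) that the invisible directions develop onto a set $I$ independent of $p$ which is an intersection of boundaries of half-spaces, hence affine and dilation-invariant; it shows $D$ is a covering onto its image in $\cN-I$, and only then rules out $\dim I>0$ via Proposition \ref{prop-limdiscret}, since a discrete group cannot contain contractions with two distinct attracting points. Your discreteness claim is also asserted too quickly: a covering onto $\Omega$ yields a discrete holonomy only when the target is simply connected, and the paper must treat separately the case where $I$ has codimension $2$ (so $\cN-I$ is not simply connected) by lifting the structure to $H\times\R$. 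Relatedly, the assertion that $\Gamma$ acts freely on $\Omega$ presupposes injectivity of $D$, which you do not have at that stage. These are not cosmetic omissions; each requires a genuine argument that the sketch does not supply.
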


This theorem notably applies in the case of every Carnot group \cite{Pansu} and also Heinsenberg-type groups. This last family includes all hyperbolic boundary geometries \cite{Cowling}. In general, if $KAN$ is an Iwasawa decomposition of a semisimple group $G$, then $A$ can both be  seen as a maximal flat subspace of $G/K$ and as a dilatation group acting on $N$. When we see $N$ as the base space, we get a boundary geometry $MAN/MA$ where $M\subset N$ is the centralizer of $A$. When $A$ is of rank one,  the theorem holds. In section \ref{sec-3} we will discuss the framework of this theorem, examples and counter-examples.

Theorem \ref{thm-nilpotent} suggests that the study of nilpotent affine manifolds is close to the traditional  affine manifolds' study. By \emph{nilpotent affine manifold} we mean manifolds possessing a $(\Aff(\cN),\cN)$-structure, where $\Aff(\cN)=\Aut(\cN)\ltimes\cN$ is the group of the affine transformations of $\cN$.
In consequence, one could ask what becomes of the various central conjectures stated in the Euclidean affine geometry. For example, the Chern conjecture states that \emph{every  affine closed manifold has a vanishing Euler characteristic}. Does every nilpotent affine closed manifold also have a vanishing Euler class?

\paragraph{}
Finally in section \ref{sec-5}, we will show  classic consequences for the closed manifolds with a geometry modeled on the boundary  of a rank one symmetric space  $\mathbf H_{\mathbf F}^n$, $
\left( {\rm PU}_{\mathbf F}(n,1), \partial \mathbf H_{\mathbf F}^n \right)
$, 
where $\mathbf F$ can be the field of  real, or complex, or quaternionic or octonionic numbers. In the octonionic case, the only dimension considered is $n=2$.

\begin{theorem*}[\ref{thm-rank1}]
Let $M$ be a connected closed $\left({\rm PU}_{\mathbf F}(n,1),\partial \mathbf{H}^n_{\mathbf F}\right)$-manifold. 
If the developing map $D$ is not surjective then it is a covering onto its image. Furthermore,
 $D$ is a covering on its image if, and only if, $D(\widetilde M)$ is equal to a connected component of $\partial \mathbf{H}^n_{\mathbf F}-L(\Gamma)$, where $L(\Gamma)$ denotes the limit set of the holonomy group $\Gamma=\rho(\pi_1(M))$.
\end{theorem*}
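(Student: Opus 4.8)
The plan is to treat $\partial \mathbf{H}^n_{\mathbf F}$ as the visual boundary of a rank one symmetric space and to reduce the two assertions to the domain-of-discontinuity formalism, using the nilpotent picture to feed in the radiant case and discreteness. First I would fix the affine chart: removing one point $\infty$ from $\partial \mathbf{H}^n_{\mathbf F}$ identifies the complement with a Heisenberg-type group $\cN$, and the stabilizer $G_\infty$ of $\infty$ in $G={\rm PU}_{\mathbf F}(n,1)$ acts on $\cN$ exactly as $\Sim(\cN)$; since the rank is one, $\cN$ is of the type to which Theorem \ref{thm-nilpotent} applies. I would also record the two standard facts about the model: the $G$-action on $\partial \mathbf{H}^n_{\mathbf F}$ is a convergence action, and for discrete $\Gamma<G$ the limit set $L(\Gamma)$ is closed, $\Gamma$-invariant, and contained in every nonempty closed $\Gamma$-invariant subset when $\Gamma$ is non-elementary, while $\Gamma$ acts properly discontinuously on $\Omega(\Gamma)=\partial \mathbf{H}^n_{\mathbf F}\setminus L(\Gamma)$.

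I would prove the equivalence of the second statement first, as it is the soft part. If $D$ is a covering onto $D(\widetilde M)$, then, replacing $\Gamma$ by $\rho(\pi_1 M)$, the deck action makes $\Gamma$ act freely and properly discontinuously on $D(\widetilde M)$ with compact quotient $M$; hence $D(\widetilde M)\subset\Omega(\Gamma)$, and since $D(\widetilde M)$ is open, $\Gamma$-invariant, connected (as the image of the connected $\widetilde M$) and the action is cocompact, it is closed in $\Omega(\Gamma)$, so it is a single connected component of $\partial \mathbf{H}^n_{\mathbf F}\setminus L(\Gamma)$. Conversely, if $D(\widetilde M)$ is a component $\Omega_0$ of $\partial \mathbf{H}^n_{\mathbf F}\setminus L(\Gamma)$, then $\Gamma$ preserves $\Omega_0$ and acts properly discontinuously on it; as $M$ is closed the action is cocompact, and a $\Gamma$-equivariant local diffeomorphism from a free cocompact proper $\Gamma$-space to $\Omega_0$ is automatically proper, hence a covering onto $\Omega_0$.

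For the first statement I would start from $D$ non-surjective, choose $\infty\notin D(\widetilde M)$ and work in the chart $\cN=\partial \mathbf{H}^n_{\mathbf F}\setminus\{\infty\}$, so that $D(\widetilde M)\subset\cN$ is open, connected and $\Gamma$-invariant, with nonempty closed $\Gamma$-invariant complement $F\ni\infty$. Using that $M$ is closed I would write $\widetilde M=\Gamma\cdot K$ for a compact $K$, so $D(\widetilde M)=\Gamma\cdot C$ with $C=D(K)$ compact in $\cN$. The elementary case is exactly Theorem \ref{thm-nilpotent}: if $\Gamma$ fixes a point $\xi\in F$, then $\Gamma\subset G_\xi=\Sim(\cN_\xi)$ and $M$ is a genuine $(\Sim(\cN_\xi),\cN_\xi)$-manifold; since $D(\widetilde M)\subset\cN_\xi$, Theorem \ref{thm-nilpotent} shows $D$ is a covering onto $\cN_\xi=\partial \mathbf{H}^n_{\mathbf F}\setminus\{\xi\}$ (a diffeomorphism in the complete case), so $L(\Gamma)=\{\xi\}$ and $D(\widetilde M)$ is the unique component of $\partial \mathbf{H}^n_{\mathbf F}\setminus L(\Gamma)$. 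In the non-elementary case I would invoke convergence dynamics: as $C$ avoids the limit set, every accumulation point of the family $\{\gamma\cdot C\}_{\gamma\in\Gamma}$ lies in $L(\Gamma)$, so $L(\Gamma)\subset F$ and $D(\widetilde M)\subset\Omega(\Gamma)$; cocompactness then forces $D(\widetilde M)$ to be a full component of $\Omega(\Gamma)$, and the criterion of the previous paragraph yields that $D$ is a covering onto its image.

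The main obstacle is that this last step presupposes $\Gamma$ discrete, for only then are $L(\Gamma)$ and the proper action on $\Omega(\Gamma)$ available; proving discreteness is the heart of the matter and is where non-surjectivity and the closedness of $M$ must be combined with the structure theory of $G$. Arguing by contradiction, if $\Gamma$ were not discrete its closure $H=\overline\Gamma$ would be a positive-dimensional Lie subgroup preserving the proper open set $D(\widetilde M)=\Gamma\cdot C$ and the nonempty closed set $F$. One must then rule out the possible shapes of $H$: a subgroup contained in a parabolic and fixing a point $\xi$ returns us to the elementary situation, where the affine chart $\cN_\xi$, Theorem \ref{thm-nilpotent}, and the convexity theory of Section \ref{sec-2} (Theorem \ref{thm-convexity} and Proposition \ref{prop-convG}) force the holonomy of a closed nilpotent similarity manifold to be discrete, against $\dim H>0$; a reductive $H$ preserving a sub-boundary must instead be excluded by confronting its positive-dimensional orbits with the cocompact family $\Gamma\cdot C$ and the convergence dynamics on $\partial \mathbf{H}^n_{\mathbf F}$. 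Making this dichotomy precise, and thereby pinning down that $D(\widetilde M)$ fills a full component of $\Omega(\Gamma)$, is the step I expect to demand the most care.
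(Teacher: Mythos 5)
Your outline correctly isolates the elementary cases (where Theorem \ref{thm-nilpotent} applies) and the role of the limit set, but the crucial non-elementary case is left genuinely open. Your plan for it runs through convergence dynamics on $\Omega(\Gamma)=\partial\mathbf H^n_{\mathbf F}-L(\Gamma)$, which presupposes that $\Gamma$ is discrete; you acknowledge this and propose to establish discreteness by analysing the closure $H=\overline\Gamma$, but you only name the dichotomy (parabolic versus reductive $H$) without carrying it out, and you yourself flag it as ``the step I expect to demand the most care.'' That step is precisely where the content lies, so as written the argument does not close. The paper avoids the issue entirely: it uses that $L(\Gamma)$ (defined for an arbitrary subgroup as $\partial\mathbf H^n_{\mathbf F}\cap\overline{\Gamma\cdot p}$) is contained in every closed $\Gamma$-invariant set with at least two points, so when $D$ omits at least two points the image automatically avoids $L(\Gamma)$, and then invokes the Kulkarni--Pinkall cutting lemma (Lemma \ref{lem-cutting}): if $L(\Gamma)$ has at least two points and $D(\widetilde M)$ avoids it, then $D$ is a covering onto a connected component of the complement. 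No separate discreteness argument is needed. The remaining cases $L(\Gamma)=\emptyset$ (spherical, hence complete, contradicting non-surjectivity) and $L(\Gamma)=\{a\}$ (reduce to Theorem \ref{thm-nilpotent}) are exactly the ones you do handle.

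There is a second, smaller gap in your ``soft part.'' From the hypothesis that $D$ is a covering onto its image you conclude that $\Gamma$ acts freely and properly discontinuously on $D(\widetilde M)$ with quotient $M$. This is not automatic: the deck group of $D\colon\widetilde M\to D(\widetilde M)$ is $\pi_1(D(\widetilde M))$, which need only contain (not equal) the image of $\ker\rho\cap\pi_1(M)$, and a priori $\Gamma$ could fail to be discrete or to act freely on the image. The paper again routes this direction through a case analysis on how many points $D$ omits, using simple connectivity of $\partial\mathbf H^n_{\mathbf F}$ and of $\partial\mathbf H^n_{\mathbf F}-\{a\}$ to get a diffeomorphism in the small cases, and the cutting lemma otherwise. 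To repair your version you would either need to justify proper discontinuity of the $\Gamma$-action directly or fall back on the same lemma.
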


\paragraph{Acknowledgement}This work is part of the author's doctoral thesis, under the supervision of Elisha Falbel. The author is sincerely indebted to E.~Falbel for the many discussions and encouragements given.


\section{Convexity}\label{sec-2}

\subsection{Geodesic structures}

\begin{definition}[Geodesic structure]\label{def-geostruct}
Let $X$ be a smooth manifold. We will say that $(X,\exp)$ is a \emph{geodesic structure} if a smooth map $\exp\colon\rT X \to X$ is such that for any $x\in X$ fixed, 
 $\exp_x \colon \rT_xX\to X$ is a surjective open map and a local diffeomorphism around $0\in \rT_xX$.

We call \emph{geodesic segment} a curve $\gamma \colon \iI \to X$ such that $\gamma(t) = \exp_x(tv)$ with $x=\gamma(0)$ and $v\in \rT_xX$.
A subset $C\subset X$ is said to be \emph{convex} if:
\begin{enumerate}
\item[a.] for every pair of points $(x,y)\in C$ there is a geodesic segment from $x$ to $y$ fully contained in $C$;
\item[b.] for any sequence $(\gamma_n)$ of geodesic segments all based in $p\in C$, if $\gamma_n(1)$ tends to $q\in \overline C$, then a subsequence of $(\gamma_n)$ tends to a geodesic segment $\gamma\colon\iI \to C$ from $p$ to $q$ such that $\gamma(t)\in C$ for $t<1$.
\end{enumerate}

We ask that $(X,\exp)$ verifies two more conditions.
\begin{enumerate}
\item Let $x\in X$ and $u\in \rT_xX$.
Let $0\leq s<1$ and $y=\exp_x(su)$. Then there exists $v\in \rT_yX$ such that for any $0\leq t<1-s$, $
\exp_y(tv) = \exp_x((s+t)u)$.
\item The space $X$ is locally convex: for any neighborhood of $x$, there exists a subset of the neighborhood that is open, convex and contains $x$.
\end{enumerate}
\end{definition}

Condition \emph{1.} ensures that a geodesic segment $\gamma(t)$ based in $\gamma(0)$ is also a geodesic segment based in $\gamma(s)$ for any $s\geq 0$ and $t\geq s$.

Note that  a geodesic segment $\gamma\colon \iI \to X$ is univocally defined by $\gamma(0)$ and the first derivative $\gamma'(0)$. Any curve $c\colon \intv 0T\to X$ can be parametrized by $\widetilde c\colon\iI\to X$ by taking $\widetilde c(t) = c(tT)$. We can therefore say that $c$ is a geodesic segment if $\widetilde c$ is a geodesic segment.

It is important to note that in general, $\exp_x\colon\rT_xX\to X$ is neither injective and nor a covering map.

\paragraph{Example 1}Riemannian and pseudo-Riemannian complete structures give a Levi-Civita connection $\nabla_uv$. A geodesic is a curve $\gamma$ such that $\nabla_{\dot\gamma}\dot\gamma=0$. Such curves, which are therefore solutions to a first order partial differential equation, are parametrized by $\rT X$. It gives an exponential map as required. Therefore, any complete Riemannian or pseudo-Riemannian manifold gives a geodesic structure.

\paragraph{Example 2}A left-invariant geodesic structure on Lie groups with a surjective exponential map is a way to get geodesic structures.

\begin{definition}
Let $G$ be a Lie group and $\mathfrak g$ its Lie algebra. The \emph{Maurer-Cartan form} is the $\mathfrak g$-valued 1-form $\omega_G\colon \rT G \to \mathfrak g$ defined by
\begin{equation}
\forall v_g\in \rT_g G, \;\omega_G(v_g) = (L_{g^{-1}})_*v_g,
\end{equation}
where $L_g$ denotes the left-translation $L_g(x)=gx$.
\end{definition}

Let $X$ be a Lie group. The Maurer-Cartan form $\omega_X\colon \rT X\to \mathfrak x$ defines left-invariant vector fields $V$ by the condition that $\omega_X(V)$ is constant. Take as geodesics the integral lines of such vector fields. Those integral lines are given by $\exp_x(tv)=L_x\exp_e(tv)$ and are exactly the maps such that $\exp_x(tv)^*\omega_X=v$ with base point $x$. Such a structure gives a geodesic structure if the exponential map of the Lie algebra to the Lie group is surjective.

On some homogenous spaces, the same construction can be adapted. It is the case for the reductive homogeneous spaces with surjective exponential map, and for those, geodesic segments are the projection of integral lines of horizontal vector fields.

\subsubsection*{Geodesic manifolds}

\begin{definition}[Geodesic manifold]
Let  $(X,\exp)$ be a geodesic structure and $M$ be a connected smooth manifold. We will say that $M$ is a \emph{$(X,\exp)$-geodesic manifold} if there exists a local diffeomorphism $D\colon\widetilde M\to X$ called a \emph{developing map}.
A \emph{geodesic segment} in $\widetilde M$ is a smooth curve $\gamma\colon\iI\to \widetilde M$ such that $D(\gamma)\colon\iI\to X$ is a geodesic segment of $(X,\exp)$.
\end{definition}

Note that since $D$ is a local diffeomorphism, a geodesic segment $\gamma\colon\iI \to \widetilde M$ is univocally defined by $\gamma(0)$ and some vector $\gamma'(0)=v\in \rT_x\widetilde M$ given by $v = (\dd D_{\gamma(0)})^{-1}(u)$ such that we have $(D\circ\gamma)(t)=\exp_{D(\gamma(0))}(tu)$. 

\begin{definition}
If $\gamma\colon \iI \to \widetilde M$ is geodesic, then we denote $\gamma(t)$ by $\exp_{\gamma(0)}(tv)$ with $v=\gamma'(0)$.
\end{definition}

To distinguish the exponential map of $X$ from the last on $\widetilde M$, we will denote sometimes $\exp^X$ and $\exp^{\widetilde M}$.

It is worth to note that $D\colon\widetilde M \to X$ is independent from the choice of the geodesic structure $(X,\exp)$ but only dependent on $\widetilde M$ and $X$. In general, a developing map is hard to construct. A $(G,X)$-structure in the sense of Thurston~\cite{Thurston} is a way to give such a developing map.
Also according to a theorem of Whitehead~\cite{Whitehead}, any open manifold of dimension $n\leq 3$  has a local diffeomorphism with $\mathbf R^n$.
In the framework of Cartan geometries, a developing map corresponds to a flat Cartan connection.

\begin{proposition}[Definition of the visible set]
Let $p\in \widetilde M$. There exists a unique subset $V_p\subset \rT_p\widetilde M$ that is a neighborhood of $0$, star-shaped, maximal and on which $\exp_p\colon V_p\to \widetilde M$ is well defined. This set is called the \emph{visible set} of (or from) $p$.
\end{proposition}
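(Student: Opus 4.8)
The plan is to define $V_p$ explicitly as the natural domain of the map $\exp_p$ on $\widetilde M$, to check the four required properties directly, and then to deduce uniqueness as a formal consequence of maximality. First I would pin down what $\exp_p(v)$ means for a vector $v\in\rT_p\widetilde M$. Writing $u=\dd D_p(v)\in\rT_{D(p)}X$, the curve $c_v(t)=\exp^X_{D(p)}(tu)$ is a geodesic segment of $X$ defined for every $t\in\iI$, since $\exp^X$ is defined on all of $\rT X$. Because $D$ is a local diffeomorphism, I would lift $c_v$ through $D$ starting at $p$: there is a largest subinterval of $\iI$ of the form $\intv 0\tau$ carrying a continuous lift $\gamma_v$ with $\gamma_v(0)=p$ and $D\circ\gamma_v=c_v$, and this lift is unique because the set where two lifts agree is both open (local injectivity of $D$) and closed, hence all of its connected domain. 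I then declare that $v\in V_p$ exactly when the lift reaches $t=1$, and in that case set $\exp_p(v):=\gamma_v(1)$; by the definition of geodesic segments in $\widetilde M$, $\gamma_v$ is itself the geodesic segment $t\mapsto\exp_p(tv)$.

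With this definition, properties \emph{3.} and \emph{4.} are immediate: $V_p$ is by construction precisely the set of vectors on which $\exp_p$ is defined, so $\exp_p\colon V_p\to\widetilde M$ is well defined and $V_p$ is the largest set with this property. For the star-shaped property, let $v\in V_p$ and $0\le t_0\le 1$. The tautological homogeneity $\exp^X_{D(p)}\bigl(s(t_0u)\bigr)=\exp^X_{D(p)}\bigl((st_0)u\bigr)$ identifies $c_{t_0v}(s)$ with $c_v(st_0)$, so by uniqueness of lifts the lift of $c_{t_0v}$ is $s\mapsto\gamma_v(st_0)$, which is defined for all $s\in\iI$; hence $t_0v\in V_p$. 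For the neighborhood property, I would use that $\exp^X_{D(p)}$ is continuous with $\exp^X_{D(p)}(0)=D(p)$ and that $D$ restricts to a diffeomorphism on some neighborhood $U$ of $p$: for $v$ small enough the whole segment $c_v(\iI)$ stays inside $D(U)$, where the lift is simply $(D|_U)^{-1}\circ c_v$ and therefore exists on all of $\iI$, so $V_p$ contains an open neighborhood of $0$ (openness coming from continuity of $(v,t)\mapsto c_v(t)$ and compactness of $\iI$).

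Uniqueness then follows formally. If $V_p'$ enjoys the same four properties, then the mere fact that $\exp_p$ is well defined on $V_p'$ forces $V_p'\subseteq V_p$, since $V_p$ is the full domain of $\exp_p$; maximality of $V_p'$ upgrades this inclusion to an equality. The one genuinely technical ingredient, and the step I expect to be the main obstacle, is the unique maximal lifting of a path through $D$: here $D$ is only a local diffeomorphism and need not be a covering map, so I cannot invoke the homotopy lifting property and must instead run the standard open and closed connectedness argument on $\intv 0\tau$, together with a supremum argument, to produce the maximal interval and the unique lift. Everything else---star-shapedness, the neighborhood of $0$, and uniqueness---is then a routine formal consequence.
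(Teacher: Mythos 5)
Your proposal is correct and follows essentially the same route as the paper, whose own proof is a two-line sketch of exactly this construction: define $V_p$ as the maximal domain of the lift of $\exp^X_{D(p)}\circ\dd D_p$ through the local diffeomorphism $D$, with star-shapedness and the neighborhood property coming from homogeneity of $t\mapsto\exp_x(tv)$ and the local invertibility of $D$. You simply supply the details (unique maximal path lifting via the open-and-closed argument, the tube-lemma step for the neighborhood of $0$) that the paper leaves implicit.
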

\begin{proof}
By assumption, the space $X$ is locally convex, and the developing map is a local diffeomorphism. Hence a maximal $V_p$ is defined and non empty. It is also naturally star-shaped and must be a neighborhood of $0$ since the developing map is a local diffeomorphism.
\end{proof}

For all $p\in \widetilde M$ and $v\in V_p$, then by definition,
\begin{equation}
\forall t\in \iI, \; D\left(\exp_p^{\widetilde M}(tv)\right) = \exp_{D(p)}^{X}(\dd D_p(tv)).
\end{equation}
Therefore  if for $p\in \widetilde M$ we take $v\in\partial V_p-V_p$, then there can not exist a vector $u\in V_p$ such that for all $t\in \iI$, $D(\exp_p^{\widetilde M}(tu)) = \exp_{D(p)}^{X}(t\dd D_p (v))$.

\begin{definition}[Convexity]
A subset $C\subset \widetilde M$ is said to be \emph{convex} if the developing image $D(C)$ is convex and if the developing map restricted to $C$ is injective.
A subset $C_p\subset V_p$ for $p\in \widetilde M$ is said to be \emph{convex} if $\exp_p(C_p)$ is convex.
\end{definition}
 This injectivity hypothesis may seem strong. However, note that the developing map could be injective on $\exp_p(C_p)$ even if $\exp_p$ is not injective.

Since the developing map is a local diffeomorphism, and since $X$ is locally convex, we get the following lemma.

\begin{lemma}
The space $\widetilde M$ is locally convex: for every point $p\in \widetilde M$,  there exists an arbitrary small open neighborhood of $p$ that is convex in $\widetilde M$.\qed
\end{lemma}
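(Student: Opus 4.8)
The plan is to transport the local convexity of $X$ through the developing map, exploiting that $D$ is a local diffeomorphism. Fix $p\in\widetilde M$ together with an arbitrary neighborhood $\Omega$ of $p$; the goal is to produce a convex neighborhood of $p$ contained in $\Omega$, which will show that convex open sets form a neighborhood basis at $p$.

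First I would use that $D$ is a local diffeomorphism to select an open neighborhood $U\subset\Omega$ of $p$ on which $D$ restricts to a diffeomorphism onto an open subset $D(U)\subset X$ containing $D(p)$. In particular $D|_U$ is injective, which will take care of the injectivity clause in the definition of convexity on $\widetilde M$.

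Next, since $(X,\exp)$ is locally convex by condition \emph{2.} of Definition \ref{def-geostruct}, the open neighborhood $D(U)$ of $D(p)$ contains an open, convex subset $W\subset D(U)$ with $D(p)\in W$. I then set $C:=(D|_U)^{-1}(W)$. This $C$ is an open neighborhood of $p$ contained in $U\subset\Omega$; its developed image $D(C)=W$ is convex in $X$ by construction; and $D|_C$ is injective because $D|_U$ already is. By the definition of convexity in $\widetilde M$, the set $C$ is therefore convex. As $\Omega$ was arbitrary, this yields arbitrarily small convex neighborhoods of $p$.

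I do not expect a genuine obstacle: everything reduces to the local injectivity furnished by the local-diffeomorphism hypothesis and to the local convexity assumed on $X$. The only point needing a little care is to fix $U$ small enough that $D|_U$ is an honest diffeomorphism \emph{before} extracting the convex piece $W$ inside $D(U)$, so that both requirements of convexity in $\widetilde M$ — convexity of $D(C)$ in $X$ and injectivity of $D|_C$ — are secured at once.
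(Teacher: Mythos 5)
Your proposal is correct and is exactly the argument the paper intends: the paper gives no written proof, merely noting that the lemma follows since $D$ is a local diffeomorphism and $X$ is locally convex, and your pullback of a convex open set of $X$ through a neighborhood on which $D$ is a diffeomorphism fills in precisely those details. Both requirements of convexity in $\widetilde M$ (convexity of the developed image and injectivity of $D$ on the set) are secured as you describe.
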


Note that if $\eta\colon\iI\to X$ is a geodesic segment lifted to $\gamma\colon\iI \to \widetilde M$, that is to say $D(\gamma)=\eta$, then (since $D$ is a local diffeomorphism) $\gamma$ is unique as soon as $\gamma(0)$ is prescribed.

\begin{lemma}\label{lem-convvisible}
Let $C\subset \widetilde M$ be convex. For any $p\in C$ we have $C\subset \exp_p(V_p)$.
\end{lemma}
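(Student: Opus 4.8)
The plan is to fix $q\in C$ and to realise it as $\exp_p(v)$ for a suitable $v\in V_p$. First I would invoke the convexity of $D(C)$ in $X$: by condition (a) there is a geodesic segment $\eta\colon\iI\to D(C)$ with $\eta(0)=D(p)$ and $\eta(1)=D(q)$. Writing $u=\eta'(0)\in\rT_{D(p)}X$ and $v=(\dd D_p)^{-1}(u)\in\rT_p\widetilde M$ (legitimate since $D$ is a local diffeomorphism at $p$), the goal becomes to show that the geodesic $t\mapsto\exp_p^{\widetilde M}(tv)$ is defined on all of $\iI$, i.e. $v\in V_p$, and that its endpoint is exactly $q$. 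Equivalently, I want to lift $\eta$ to a geodesic segment $\gamma\colon\iI\to\widetilde M$ with $\gamma(0)=p$, $D\circ\gamma=\eta$ and, crucially, $\gamma(t)\in C$ for every $t$; the last requirement is what pins down the endpoint, because $D|_C$ is injective and $\eta(1)=D(q)$ then forces $\gamma(1)=q$.

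Because $D$ is only a local diffeomorphism (and not a covering in general), this lift is not automatic, so I would run a continuation argument on the set $J=\{t\in\iI:\ \eta|_{[0,t]}\text{ lifts to a geodesic segment in }C\text{ issued from }p\}$. By uniqueness of lifts through a local diffeomorphism such a lift is unique where it exists, so the partial lifts are compatible and $J$ is an interval containing $0$; let $\tau=\sup J$. For the openness of $J$ I would use condition (1) of Definition~\ref{def-geostruct} to prolong the lifted geodesic slightly past a given parameter $t_0$, together with local convexity of $\widetilde M$: in a small convex neighbourhood $W$ of the current endpoint $\gamma(t_0)\in C$ the map $D|_W$ is a diffeomorphism, and since $\eta$ stays in $D(C)$ and $D|_C$ is injective, the prolonged lift continues to coincide with the $C$-representative of its development for a little longer.

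The delicate point — and the step I expect to be the main obstacle — is the closedness of $J$, i.e. ruling out that the lift escapes $C$ (jumps to another sheet of $D$ or runs off $\widetilde M$) as $t\uparrow\tau$. Here I would appeal to the stability property, condition (b), applied to the convex set $D(C)$: the developments $\eta|_{[0,t_n]}$, reparametrised on $\iI$ for $t_n\uparrow\tau$, are geodesic segments based at $D(p)\in D(C)$ whose endpoints satisfy $\eta(t_n)\to\eta(\tau)\in D(C)$, so a subsequence converges to a geodesic segment $\delta$ of $X$ ending at $\eta(\tau)$. Lifting $\delta$ from $p$ through the local diffeomorphism and matching it with $\gamma$ on $[0,\tau)$ should extend the lift up to $\tau$; since $\tau\le 1$ the point $\eta(\tau)$ genuinely lies in $D(C)$ (not merely in its closure), so by injectivity of $D|_C$ the extended endpoint must be the $C$-representative $(D|_C)^{-1}(\eta(\tau))\in C$, whence $\gamma(\tau)\in C$ and $\tau\in J$.

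Combining openness and closedness gives $J=\iI$ and a geodesic segment $\gamma\colon\iI\to C$ with $\gamma(0)=p$ and $D\circ\gamma=\eta$. In particular $v=\gamma'(0)\in V_p$, and because $D(\gamma(1))=\eta(1)=D(q)$ with both $\gamma(1)$ and $q$ lying in $C$, the injectivity of $D|_C$ yields $\gamma(1)=q$. Hence $q=\exp_p(v)\in\exp_p(V_p)$, and as $q\in C$ was arbitrary, $C\subset\exp_p(V_p)$. The only genuine work is the control of the branch of the lift, which is precisely where the interplay between injectivity of $D|_C$, convexity of $D(C)$, and the stability axiom (b) is essential.
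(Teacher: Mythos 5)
Your argument is correct and uses the right ingredients (condition (a) for $D(C)$ to produce the geodesic downstairs, injectivity of $D|_C$ to control the branch of the lift), but it is considerably heavier than necessary, and the paper's own proof is essentially one line. The point you flag as ``not automatic'' --- the existence of the lift --- is in fact automatic: since $D$ is a local diffeomorphism and $D|_C$ is injective by the definition of convexity in $\widetilde M$, the restriction $D|_C\colon C\to D(C)$ is a diffeomorphism onto its image, so the lift of $\eta$ is simply $(D|_C)^{-1}\circ\eta$, globally defined because $\eta$ takes all its values in $D(C)$. This curve develops onto $\eta$, hence is by definition a geodesic segment of $\widetilde M$ from $p$ to $q$, and $q\in\exp_p(V_p)$ follows at once. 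Your open--closed continuation on $J$ reproves this by hand; in particular the appeal to the stability axiom (b) in the closedness step is not needed (continuity of $(D|_C)^{-1}$ already gives $\gamma(t)\to(D|_C)^{-1}(\eta(\tau))\in C$ as $t\uparrow\tau$), and the openness step quietly relies on the same global injectivity anyway when you identify the locally prolonged lift with the ``$C$-representative.'' So nothing is wrong, but the continuation scaffolding obscures the fact that the injectivity clause in the definition of convexity was designed precisely to make this lemma immediate.
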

\begin{proof}
By convexity, $D|_C\colon C\to D(C)$ is a diffeomorphism.
Let $p\in C$ and $q\in C$. If $\gamma$ is a geodesic segment from $D(p)$ to $D(q)$, then $D^{-1}(\gamma)$ is  a geodesic segment from $p$ to $q$.
\end{proof}

\begin{proposition}
If for $p\in \widetilde M$, $V_p$ is convex and open, then $\exp_p(V_p)=\widetilde M$.
\end{proposition}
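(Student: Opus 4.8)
The plan is to set $U = \exp_p(V_p)$ and to prove that $U$ is nonempty, open and closed in $\widetilde M$; since $\widetilde M$ is connected this forces $U = \widetilde M$. Nonemptiness is immediate because $0 \in V_p$ and $p = \exp_p(0) \in U$. The hypothesis that $V_p$ is convex means, by definition, that $C := D(U)$ is a convex subset of $X$ and that $\Phi := D|_U \colon U \to C$ is injective; being a bijective local diffeomorphism onto its image, $\Phi$ is a diffeomorphism. Moreover $C = \exp^X_{D(p)}(\dd D_p(V_p))$, and since $V_p$ is open and $\dd D_p$ is a linear isomorphism while $\exp^X_{D(p)}$ is an open map, $C$ is open in $X$. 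These two facts (injectivity of $D$ on $U$, openness of $C$) drive the rest.

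For openness of $U$ I would argue locally. Fix $q \in U$ and a neighborhood $W$ of $q$ on which $D$ restricts to a diffeomorphism onto an open set. Since $\Phi^{-1} \colon C \to U$ is continuous and $\Phi^{-1}(D(q)) = q$, there is a neighborhood $O$ of $D(q)$ in $C$ with $\Phi^{-1}(O) \subseteq W$. Set $W'' = W \cap D^{-1}(O)$. For $q' \in W''$ we have $D(q') \in O \subseteq C = D(U)$, hence $q'' := \Phi^{-1}(D(q')) \in \Phi^{-1}(O) \subseteq W$; as $q', q'' \in W$ and $D(q') = D(q'')$, injectivity of $D|_W$ gives $q' = q'' \in U$. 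Thus $W'' \subseteq U$ and $U$ is open. Note this never uses that $\exp_p$ is a local diffeomorphism on $V_p$, which may well fail since $\exp^X$ is only assumed to be a local diffeomorphism near $0$.

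For closedness, take $q \in \overline U$ and a sequence $q_n = \exp_p(v_n) \to q$ with $v_n \in V_p$. Developing, the curves $\eta_n(t) = \exp^X_{D(p)}(t\,\dd D_p(v_n))$ are geodesic segments of $X$ based at $D(p) \in C$, with $\eta_n(1) = D(q_n) \to D(q) \in \overline C$. Applying the stability condition (b) of convexity to $C$, a subsequence $\eta_{n_k}$ tends to a geodesic segment $\eta \colon \iI \to X$ from $D(p)$ to $D(q)$ with $\eta(t) \in C$ for $t < 1$; convergence of the initial velocities yields $v_{n_k} \to v := (\dd D_p)^{-1}(\eta'(0))$. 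For $t < 1$ the point $\eta(t)$ lies in $C$, so $\Phi^{-1}(\eta(t))$ is the lift of a geodesic segment of $X$ issued from $p$ with initial velocity $v$; hence $\exp_p(tv)$ is defined and equals $\Phi^{-1}(\eta(t))$ for every $t < 1$, i.e. $tv \in V_p$ for all $t < 1$.

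The main obstacle is the endpoint: I must show $\lim_{t\to 1^-}\exp_p(tv) = q$, so that the geodesic closes up at $q$ and $v \in V_p$ with $\exp_p(v) = q \in U$. This is a delicate interchange of the limits $t \to 1$ and $k \to \infty$, which I would control through a convex neighborhood $N$ of $q$ (local convexity of $\widetilde M$) on which $D$ is injective. Uniqueness of path-lifts through the local diffeomorphism $D$ shows that each $\exp_p(t v_{n_k})$ coincides with $(D|_N)^{-1}(\eta_{n_k}(t))$ as long as it stays in $N$; and since $\eta(\intv{a_*}{1})$ is a compact subset of the open set $D(N)$ for some $a_* < 1$, the uniform convergence $\eta_{n_k} \to \eta$ forces $\eta_{n_k}(\intv{a_*}{1}) \subseteq D(N)$ and hence $\exp_p(t v_{n_k}) \in N$ for $t \in \intv{a_*}{1}$ and $k$ large. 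Letting $k \to \infty$ at a fixed $t_* \in \mathopen{]}a_*,1\mathclose{[}$ then gives $\exp_p(t_* v) = (D|_N)^{-1}(\eta(t_*))$, and letting $t_* \to 1$ yields the limit $q$. With the endpoint identified, $\exp_p(\,\cdot\, v)$ extends to a geodesic segment from $p$ to $q$, so $q \in U$; thus $U$ is closed, and being a nonempty open-and-closed subset of the connected space $\widetilde M$, we conclude $U = \widetilde M$.
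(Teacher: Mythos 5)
Your proof is correct and follows essentially the same route as the paper: show $\exp_p(V_p)$ is open using the injectivity of $D$ on it and the openness of its developing image, show it is closed by applying the stability condition (b) of convexity to $D(\exp_p(V_p))$ and lifting the limit geodesic, identifying its endpoint through a neighborhood of $q$ on which $D$ is injective, and conclude by connectedness. Your write-up is somewhat more detailed than the paper's (notably in the openness step and in the interchange of limits at the endpoint), but the underlying argument is the same.
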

\begin{proof}
The set $\exp_p(V_p)$ is open. Indeed, $D(\exp_p^{\widetilde M}(V_p)) = \exp_{D(p)}^X(\dd D_p (V_p))$ is open since $\exp$ is an open map of $\rT X$ and $D$ is a diffeomorphism between $\exp_p(V_p)$ and $D(\exp_p(V_p))$. Therefore, it suffices to show that $\exp_p(V_p)$ is closed, since by connexity this implies $\exp_p(V_p)=\widetilde M$.
Let $q\in \widetilde M$ be in $\overline{\exp_p(V_p)}$. We show that $q\in \exp_p(V_p)$. 

By local convexity, there exists $C$ open and convex containing $q$. There exists a sequence $q_n\in C\cap \exp_p(V_p)$ such that $q_n\to q$. 

In the developing image, we can take a sequence $(\gamma_n)$ of geodesic segments from $D(p)$ to $D(q_n)$. By convexity of $D(\exp_p(V_p))$ in $X$, this sequence has a subsequence converging to a geodesic segment
$\gamma\colon\iI \to X$ such that: $\gamma(0)=D(p)$ and $\gamma(1)=D(q)$. We have furthermore, for $t<1$, $\gamma(t)\in D(\exp_p(V_p))$ and, for $t<1$ large enough, $\gamma(t)\in D(\exp_p(V_p)\cap C)$. Note that $\gamma(t)\to D(q)$ when $t\to 1$.

The geodesic segment $\gamma$ can be lifted to $\widetilde \gamma$ for $t<1$ in $\exp_p(V_p)$. Take $t_n \to 1$ an increasing sequence of times $0\leq t_n <1$. 
By  injectivity on $\exp_p(V_p)\cap C$ and since $\gamma(t_n)\to D(q)$, we have $\widetilde\gamma(t_n)\to q$. 
Therefore the lifting $\widetilde\gamma$ does not blow up when $t_n\to 1$ and
the natural compactification of $\widetilde \gamma$ by $\widetilde\gamma(1)=q$ lifts $\gamma$ for all $t\in\iI$. This shows that $q\in\exp_p(V_p)$.
\end{proof}

\begin{lemma}
Let $p\in \widetilde M$ and suppose that $V_p=\rT_p\widetilde M$. Then $D(\widetilde M)=X$.
\end{lemma}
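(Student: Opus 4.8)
The plan is to exploit the intertwining relation between the two exponential maps together with the surjectivity of $\exp^X$ that is built into the definition of a geodesic structure. Concretely, I would start from the identity recorded just above lemma~\ref{lem-convvisible}: for every $p\in\widetilde M$ and every $v\in V_p$,
\begin{equation}
D\left(\exp_p^{\widetilde M}(tv)\right) = \exp_{D(p)}^{X}\left(\dd D_p(tv)\right),\quad t\in\iI.
\end{equation}
The hypothesis $V_p=\rT_p\widetilde M$ means precisely that $\exp_p^{\widetilde M}$ is defined on the whole tangent space, so the identity above is available for \emph{every} $v\in\rT_p\widetilde M$; specializing to $t=1$ gives $D\circ\exp_p^{\widetilde M}=\exp_{D(p)}^X\circ\,\dd D_p$ on all of $\rT_p\widetilde M$.

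Next I would use that $D$ is a local diffeomorphism, so its differential $\dd D_p\colon\rT_p\widetilde M\to\rT_{D(p)}X$ is a linear isomorphism. Hence $\dd D_p(V_p)=\dd D_p(\rT_p\widetilde M)=\rT_{D(p)}X$. Combining this with the intertwining identity yields
\begin{equation}
D\left(\exp_p^{\widetilde M}(V_p)\right)=\exp_{D(p)}^{X}\left(\dd D_p(V_p)\right)=\exp_{D(p)}^{X}\left(\rT_{D(p)}X\right).
\end{equation}
By definition~\ref{def-geostruct} of a geodesic structure, $\exp_{D(p)}^X\colon\rT_{D(p)}X\to X$ is surjective, so the right-hand side equals $X$. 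Since $\exp_p^{\widetilde M}(V_p)\subset\widetilde M$, we conclude $X=D\left(\exp_p^{\widetilde M}(V_p)\right)\subset D(\widetilde M)\subset X$, whence $D(\widetilde M)=X$.

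I do not expect a serious obstacle here, as the argument is essentially a direct transport of the surjectivity of $\exp^X$ through the developing map. The only point that requires care is making sure the intertwining relation is genuinely available on the entire tangent space, which is exactly what the assumption $V_p=\rT_p\widetilde M$ provides; everything else is formal, relying on $\dd D_p$ being a linear isomorphism and on the surjectivity clause of the geodesic structure axioms.
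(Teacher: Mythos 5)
Your proof is correct and follows exactly the paper's own argument: the intertwining identity $D\circ\exp_p^{\widetilde M}=\exp_{D(p)}^X\circ\,\dd D_p$ on $V_p=\rT_p\widetilde M$, the fact that $\dd D_p$ is a linear isomorphism, and the surjectivity of $\exp_{D(p)}^X$ from the definition of a geodesic structure. You have merely spelled out the steps the paper compresses into a single displayed chain of equalities.
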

\begin{proof}
By definition and the surjectivity of the exponential in $X$, if $V_p=\rT_p\widetilde M$ then
\begin{equation}
D\left(\exp_p^{\widetilde M}\left(\rT_p\widetilde M\right)\right)=\exp_{D(p)}^X\left(\dd D_p\left(\rT_p\widetilde M\right)\right) =\exp_{D(p)}^X\left(\rT_{D(p)}X\right)=X.\qedhere
\end{equation}
\end{proof}

\begin{theorem}\label{thm-convexity}
Let $M$ be a  connected $(X,\exp)$-geodesic manifold.
The following properties are equivalent.
\begin{enumerate}
\item The developing map $D\colon\widetilde M \to X$ is a diffeomorphism.
\item For all $p\in \widetilde M$, the subset $V_p$ is convex and equal to $\rT_p\widetilde M$.
\item There exists $p\in \widetilde M$ such that $V_p$ is convex and equal to $\rT_p\widetilde M$.
\end{enumerate}
\end{theorem}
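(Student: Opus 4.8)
The plan is to establish the cycle of implications $(2)\Rightarrow(3)\Rightarrow(1)\Rightarrow(2)$. The implication $(2)\Rightarrow(3)$ is immediate: since $M$ is a nonempty connected manifold, $\widetilde M$ is nonempty, so a property holding at every point holds at some point. The two substantive directions are $(3)\Rightarrow(1)$, where a single well-behaved visible set is promoted to a global statement about $D$, and $(1)\Rightarrow(2)$, where the structure is transported back along $D^{-1}$.

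For $(3)\Rightarrow(1)$, I would fix $p$ with $V_p=\rT_p\widetilde M$ convex and extract the two halves of ``$D$ is a diffeomorphism'' separately. Surjectivity of $D$ is given directly by the lemma above: $V_p=\rT_p\widetilde M$ forces $D(\widetilde M)=X$. For injectivity, I first invoke the preceding proposition: since $V_p=\rT_p\widetilde M$ is open and convex, $\exp_p(V_p)=\widetilde M$, so $\exp_p$ is onto $\widetilde M$. Now unwinding the definition of convexity of $V_p$: it means exactly that $\exp_p(V_p)$ is a convex subset of $\widetilde M$, hence that $D$ restricted to $\exp_p(V_p)=\widetilde M$ is injective. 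Thus $D$ is a bijective local diffeomorphism, and therefore a diffeomorphism. The care needed here is to read ``$V_p$ convex'' through the whole chain of definitions (convex as a subset of $V_p$ $\rightarrow$ $\exp_p(V_p)$ convex in $\widetilde M$ $\rightarrow$ $D|_{\exp_p(V_p)}$ injective), and to note that it is the proposition that makes $\exp_p(V_p)$ equal to all of $\widetilde M$, so that this injectivity is in fact global.

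For $(1)\Rightarrow(2)$, assume $D$ is a diffeomorphism and fix an arbitrary $p$. For each $v\in\rT_p\widetilde M$ the curve $t\mapsto D^{-1}\bigl(\exp^X_{D(p)}(t\,\dd D_p v)\bigr)$ is defined for all $t\in\iI$, because $\exp^X$ is defined on all of $\rT X$ and $D^{-1}$ is defined on all of $X$; it is a geodesic segment lifting a geodesic of $X$, so $v\in V_p$, whence $V_p=\rT_p\widetilde M$. It then remains to see that $V_p$ is convex, that is, that $\exp_p(V_p)$ is a convex subset of $\widetilde M$. Since $\exp^X_{D(p)}$ is surjective and $D$ is a diffeomorphism, $\exp_p(V_p)=D^{-1}(X)=\widetilde M$, and $D|_{\widetilde M}$ is injective; so convexity of $V_p$ amounts precisely to the convexity of the model $X$ as a subset of itself.

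The main obstacle, then, lies not in the local-to-global step $(3)\Rightarrow(1)$ — which is a clean bookkeeping of definitions powered by the two preceding results — but in the final reduction of $(1)\Rightarrow(2)$ to the global convexity of $X$. Condition \emph{a.} of convexity for $X$ is automatic from surjectivity of $\exp^X$, since every pair of points of $X$ is then joined by a geodesic segment; condition \emph{b.}, the stability of limits of geodesic segments, is a genuine properness requirement on the model. I would therefore isolate the global convexity of $X$ as the property of the model that has to be in place, and, once it is, transport it along the diffeomorphism $D^{-1}$ to obtain convexity of each $V_p$, thereby closing the cycle.
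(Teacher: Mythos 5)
Your proposal is correct and follows essentially the same route as the paper: the same cycle of implications, with $(3)\Rightarrow(1)$ resting on the proposition that a convex open $V_p$ gives $\exp_p(V_p)=\widetilde M$ (hence injectivity of $D$) and on the lemma that $V_p=\rT_p\widetilde M$ gives surjectivity, and $(1)\Rightarrow(2)$ obtained by lifting geodesics through $D^{-1}$ and transporting the convexity of $X$ itself. Your explicit remark that condition \emph{b.}~of convexity for $X$ as a subset of itself is a genuine assumption on the model is a fair observation that the paper leaves implicit in the phrase ``since $X$ is convex,'' but it does not change the argument.
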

\begin{proof}
Suppose that \emph{1} is true, we prove \emph{2}. For any $p\in \widetilde M$, it is clear that $V_p=\rT_p\widetilde M$ since any geodesic segment is lifted by $D^{-1}$. By definition, $V_p$ is also convex since $X$ is convex and $D$ is a diffeomorphism.

\emph{2} clearly implies \emph{3}.
We suppose that \emph{3} is true and we prove \emph{1}. By the preceding proposition, $\widetilde M = \exp_p(V_p)$ is convex and therefore $D$ is injective on $\widetilde M$. Furthermore by the preceding lemma, $D(\widetilde M)=X$.
\end{proof}

\paragraph{Convexity versus completeness}It is legitimate to investigate if $V_p=\rT_p\widetilde M$ implies that $V_p$ is convex. In a short moment, we will see that this is true for \emph{injective} structures. For now, consider the usual torus $\mathbf R^2/\mathbf Z^2$ and its universal cover $\pi\colon\mathbf R^2\to \mathbf R^2/\mathbf Z^2$. On the torus, consider the exponential map given by the straight segments of $\mathbf R^2$ projected on $\mathbf R^2/\mathbf Z^2$. (It is the natural Euclidean structure.) This gives a geodesic structure. The universal cover being a local diffeomorphism, we can see it as a developing map. Take $p\in \mathbf R^2$. Then $V_p=\rT_p\mathbf R^2$ but $V_p$ is not convex because $\exp_p(V_p)=\mathbf R^2$ is not injected into the torus. And indeed, $\pi$ is not a diffeomorphism.

If we no longer ask $D$ to be a diffeomorphism but only to be a covering map, it seems reasonable that geodesic completeness ($V_p=\rT_p\widetilde M$ for every $p\in \widetilde M$) is a sufficient condition.
But when $X$ is a simply connected space, which will be the case for us, it is equivalent to investigate when $D$ is a diffeomorphism.

\paragraph{Flat Cartan geometries and completeness}Suppose that $X$ is a homogenous space $G/H$ and is equipped with a geodesic structure. 
It is known in the Cartan theory (through the separated works of Ehresmann and Whitehead -- compare with \cite[p. 213]{Sharpe} for a more precise theorem), by very different techniques, that $\widetilde M$ is diffeomorphic to $G/H$ if and only if $\omega_D$ is complete (that is to say every $\omega_D$ constant vector field on $D^*G$ is complete).

On the other hand, Sharpe asks (see \cite[p. 184]{Sharpe}) what a good geometric interpretation of the completeness of a Cartan connection could be in terms of geodesics: “It would be very interesting to have a definition of completeness in terms of $M$ [...], something like completeness of geodesics.” This theorem gives such an interpretation in some cases. In our setting, the Cartan connection $\omega_D$ is complete if, and only if, $D$ is a diffeomorphism, and this is (by the preceding theorem) equivalent to a completeness condition on the geodesic structure on $\widetilde M$.

\subsubsection*{Injective structures}

An additional hypothesis can sometimes be made on $(X,\exp)$. It is notably the case when $X$ is an Hadamard space.

\begin{definition}
Let $(X,\exp)$ be a geodesic structure. We will say that it is an \emph{injective} geodesic structure if for any pair of points $(x,y)\in X\times X$, there exists a unique geodesic segment from $x$ to $y$.
\end{definition}

\begin{lemma}\label{lem-3geodesic}
Let $p,x\in \widetilde M$. Let $C$ be a convex subset of $\widetilde M$. Suppose that $p\in C$, $x$ is visible from $p$ and $D(x)\in D(C)$. Then $x\in C$.
\end{lemma}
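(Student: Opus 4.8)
The plan is to produce two geodesic segments in $\widetilde M$ emanating from $p$ and reaching the relevant endpoint, force them to coincide using the injectivity hypothesis on $(X,\exp)$, and then conclude by the uniqueness of geodesic lifts once a base point is fixed.

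First I would exploit the convexity of $C$: since $D|_C\colon C\to D(C)$ is a diffeomorphism and $D(x)\in D(C)$, there is a unique point $x'\in C$ with $D(x')=D(x)$, and the whole task reduces to proving $x'=x$. On the other hand, because $x$ is visible from $p$, there is a vector $v\in V_p$ with $\exp_p(v)=x$, and hence a geodesic segment $\gamma(t)=\exp_p(tv)$ in $\widetilde M$ running from $p$ to $x$. Its development $D(\gamma)$ is a geodesic segment of $X$ from $D(p)$ to $D(x)$.

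Next I would invoke the convexity of $D(C)$ inside $X$: there is a geodesic segment from $D(p)$ to $D(x')=D(x)$ lying entirely in $D(C)$. Pulling it back through the diffeomorphism $D|_C^{-1}$, exactly as in the proof of Lemma \ref{lem-convvisible}, produces a geodesic segment $\delta$ contained in $C$ and joining $p$ to $x'$. At this stage the injectivity of the structure $(X,\exp)$ enters decisively: since there is a \emph{unique} geodesic segment of $X$ from $D(p)$ to $D(x)$, the two developed segments must agree, $D(\gamma)=D(\delta)$.

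Finally, $\gamma$ and $\delta$ are now two lifts of one and the same geodesic segment of $X$, both based at $p$; by the uniqueness of the lift of a geodesic segment once $\gamma(0)$ is prescribed, we obtain $\gamma=\delta$, and therefore $x=\gamma(1)=\delta(1)=x'\in C$. The single delicate point—and the only place where the argument could fail—is the identification $D(\gamma)=D(\delta)$: this is precisely what the injectivity hypothesis supplies. Without it one could conclude only that $\gamma$ and $\delta$ share their endpoints in $X$, not that they are the same curve, so the whole conclusion hinges on having a \emph{unique} geodesic segment between any two points of $X$.
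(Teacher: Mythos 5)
Your proof is correct and follows essentially the same route as the paper's: both arguments take the geodesic segment from $p$ to $x$ given by visibility, use injectivity of $(X,\exp)$ to identify its development with the geodesic segment lying in $D(C)$ supplied by convexity, and conclude by uniqueness of the lift based at $p$. The only cosmetic difference is that you name the candidate endpoint $x'\in C$ explicitly before showing it equals $x$, whereas the paper phrases this as the lift of $D(\gamma)$ through $D|_C^{-1}$ being $\gamma$ itself.
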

\begin{proof}
Let $\gamma\colon\iI\to \widetilde M$ be a geodesic segment from $p$ to $x$. By injectivity of the geodesic structure, $D(\gamma)$ is the unique geodesic segment from $D(p)$ and $D(x)$ and is therefore by convexity entirely contained in $D(C)$. But $\gamma$ is also the unique lifting of $D(\gamma)$ based in $p$. In particular, the lifting of $D(\gamma)$ in $C$ by using the diffeomorphism $D|_C\colon C \to D(C)$ is again $\gamma$ and this shows that $\gamma(1)=x\in C$.
\end{proof}

\begin{proposition}
Let $C_1,C_2$ be two convex subsets with a non empty intersection. Then the developing map $D$ is injective on $C_1\cup C_2$.
\end{proposition}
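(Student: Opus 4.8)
The plan is to exploit that the injectivity hypothesis on $(X,\exp)$ forces geodesic segments in $X$ to be unique, and then to combine this with the uniqueness of liftings through the local diffeomorphism $D$. Fix a point $p$ in the nonempty intersection $C_1\cap C_2$. To prove injectivity of $D$ on $C_1\cup C_2$, take two points $x,y\in C_1\cup C_2$ with $D(x)=D(y)$; I must show $x=y$. If $x$ and $y$ lie in the same convex subset ($C_1$ or $C_2$), injectivity is immediate from the convexity hypothesis, so the only case to treat is the ``cross'' case, say $x\in C_1$ and $y\in C_2$.

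In this case I would first produce geodesic segments joining $p$ to $x$ and $p$ to $y$. Since $p,x\in C_1$ and $C_1$ is convex, Lemma~\ref{lem-convvisible} gives $x\in\exp_p(V_p)$, so there is a geodesic segment $\gamma_1\colon\iI\to\widetilde M$ from $p$ to $x$ with $\gamma_1$ contained in $C_1$ (its development is the geodesic segment from $D(p)$ to $D(x)$ lying in the convex set $D(C_1)$). Symmetrically, $p,y\in C_2$ convex yields a geodesic segment $\gamma_2\colon\iI\to\widetilde M$ from $p$ to $y$ contained in $C_2$.

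Now I would pass to the developing image. The curves $D(\gamma_1)$ and $D(\gamma_2)$ are both geodesic segments of $X$ starting at $D(p)$ and ending at $D(x)=D(y)$. By the injectivity of the geodesic structure there is a \emph{unique} geodesic segment from $D(p)$ to that common endpoint, hence $D(\gamma_1)=D(\gamma_2)$. Finally, $\gamma_1$ and $\gamma_2$ are two liftings of this single geodesic segment, both based at the prescribed point $p$; since $D$ is a local diffeomorphism, such a lifting is unique once its starting point is fixed, so $\gamma_1=\gamma_2$. Evaluating at $t=1$ gives $x=\gamma_1(1)=\gamma_2(1)=y$, as desired.

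The only delicate point is the very first reduction: one must be sure the ``cross'' case is genuinely the only nontrivial one and that the chosen base point $p$ is simultaneously visible-from inside both convex sets—this is precisely what the nonempty intersection together with Lemma~\ref{lem-convvisible} supplies, so no real obstacle remains once the injectivity of the structure is invoked. The argument uses the injective hypothesis in an essential way and would fail for a general geodesic structure, where two distinct geodesic segments could share both endpoints.
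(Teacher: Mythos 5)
Your argument is correct and is essentially the paper's own proof: both fix a base point $p\in C_1\cap C_2$, invoke the uniqueness of the geodesic segment in $X$ from $D(p)$ to the common image point (which by convexity lies in both $D(C_1)$ and $D(C_2)$), and conclude by uniqueness of the lift based at $p$ through the local diffeomorphism $D$. Your extra care with the case split and the explicit appeal to Lemma~\ref{lem-convvisible} only makes explicit what the paper leaves implicit.
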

\begin{proof}
Let $p\in C_1\cap C_2$. Suppose that for $q_1\in C_1$ and $q_2\in C_2$ we have $D(q_1)=D(q_2)$. Then there exists a unique geodesic segment from $D(p)$ to $D(q_1)=D(q_2)$. By convexity, this geodesic segment is simultaneously in $D(C_1)$ and in $D(C_2)$. By unicity of the lifted geodesic segment we have $q_1=q_2$.
\end{proof}

\begin{lemma}
Let $p\in \widetilde M$. The developing map $D$ restricted to the subset $\exp_p(V_p)$ is injective.
\end{lemma}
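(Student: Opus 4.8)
The plan is to reduce everything to the two features now available: the \emph{injectivity} of the geodesic structure on $X$ (a unique geodesic segment between any ordered pair of points), and the uniqueness of the lift of a geodesic segment once its starting point in $\widetilde M$ is prescribed, which was already observed and which follows from $D$ being a local diffeomorphism. In particular no convexity of $V_p$ should be needed; this is purely a consequence of injectivity plus unique lifting.

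First I would take $q_1,q_2\in \exp_p(V_p)$ with $D(q_1)=D(q_2)$ and aim to prove $q_1=q_2$. Since both points are visible from $p$, there are $v_1,v_2\in V_p$ with $q_i=\exp_p(v_i)$, hence geodesic segments $\gamma_i\colon\iI\to\widetilde M$ given by $\gamma_i(t)=\exp_p(tv_i)$ and joining $p$ to $q_i$. Developing them, each $D\circ\gamma_i$ is a geodesic segment of $X$ running from $D(p)$ to $D(q_i)$.

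Second, because $D(q_1)=D(q_2)$, the two developed curves $D\circ\gamma_1$ and $D\circ\gamma_2$ are geodesic segments of $X$ sharing the same ordered pair of endpoints $\bigl(D(p),D(q_1)\bigr)$. By the injectivity hypothesis on $(X,\exp)$ there is exactly one such segment, so $D\circ\gamma_1=D\circ\gamma_2$ as curves. Finally, $\gamma_1$ and $\gamma_2$ are both lifts of this single geodesic segment of $X$ starting at the prescribed point $p$; by the uniqueness of such a lift they coincide, whence $q_1=\gamma_1(1)=\gamma_2(1)=q_2$.

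The argument is short, and the only point demanding care is to confirm that the two developed curves genuinely qualify as geodesic segments between the \emph{same} ordered pair of points, so that the injectivity hypothesis can be invoked — this is exactly where the equality $D(q_1)=D(q_2)$ enters, and it is the single assumption driving the whole proof. Everything else is bookkeeping with the local-diffeomorphism property of $D$ and the unique-lifting remark recorded earlier.
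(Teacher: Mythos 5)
Your proof is correct and follows exactly the paper's own argument: develop the two geodesic segments from $p$ to $q_1$ and $q_2$, invoke injectivity of $(X,\exp)$ to identify the developed segments, and conclude by uniqueness of the lift based at $p$. Nothing further is needed.
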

\begin{proof}
 Let $q_1,q_2\in \exp_p(V_p)$ and suppose that $D(q_1)=D(q_2)$. Let $\gamma_1,\gamma_2$ be two geodesic segments from $p$, to $q_1$ on one hand and to $q_2$ on the other hand. Then $D(\gamma_1)=D(\gamma_2)$ by the injectivity of the geodesic structure $(X,\exp)$. By unicity of the lifted geodesic segment $q_1=q_2$.
\end{proof}

\begin{proposition}\label{prop-convgeocomp}
Suppose that for $p\in \widetilde M$ we have $V_p=\rT_p\widetilde M$, then $V_p$ is convex.
\end{proposition}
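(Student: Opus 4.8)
The plan is to unwind the definitions. Recall that $V_p$ being convex means precisely that $\exp_p(V_p)$ is a convex subset of $\widetilde M$, which in turn means two things: that $D$ restricted to $\exp_p(V_p)$ is injective, and that $D(\exp_p(V_p))$ is a convex subset of $X$. The first of these is already settled by the preceding lemma, which uses exactly the injectivity of the geodesic structure. So the entire proposition reduces to showing that $D(\exp_p(V_p))$ is convex in $X$.

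First I would identify this developing image. Since $V_p=\rT_p\widetilde M$ by hypothesis and $\dd D_p$ is a linear isomorphism, the relation $D(\exp_p(v)) = \exp^X_{D(p)}(\dd D_p(v))$ together with the surjectivity of $\exp^X$ gives $D(\exp_p(V_p)) = \exp^X_{D(p)}(\rT_{D(p)}X) = X$, exactly as in the earlier lemma computing $D(\widetilde M)=X$. Thus the problem collapses to a statement purely about the model space: I must show that $X$ itself is a convex subset of $X$.

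Now I verify the two convexity axioms for $X$. Axiom (a) is immediate: given $x,y\in X$, surjectivity of $\exp_x$ furnishes $v$ with $\exp_x(v)=y$, so $t\mapsto\exp_x(tv)$ is a geodesic segment from $x$ to $y$ lying in $X$. The real content, and the step I expect to be the \emph{main obstacle}, is axiom (b), the stability of geodesic segments under limits of endpoints. Here is where injectivity of the structure does the work. For each $x$, the map $\exp_x\colon\rT_xX\to X$ is surjective and, by the uniqueness clause of an injective structure, also injective: two geodesic segments issued from $x$ sharing an endpoint coincide, hence have equal initial velocities. Being a continuous open bijection, $\exp_x$ is therefore a homeomorphism, so its inverse $\log_x=\exp_x^{-1}$ is continuous.

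With continuity of $\log_x$ in hand, axiom (b) follows directly. If $(\gamma_n)$ are geodesic segments based at $p$ with $\gamma_n(1)\to q$, write $\gamma_n(t)=\exp_p(tv_n)$ with $v_n=\log_p(\gamma_n(1))$; continuity of $\log_p$ yields $v_n\to v$, where $v=\log_p(q)$, and then smoothness of $\exp\colon\rT X\to X$ gives $\gamma_n\to\gamma$ with $\gamma(t)=\exp_p(tv)$, a geodesic segment from $p$ to $q$ (and trivially $\gamma(t)\in X$). This establishes that $X$ is convex, hence that $D(\exp_p(V_p))=X$ is convex, and combined with the injectivity already noted, $\exp_p(V_p)$ is convex in $\widetilde M$; that is, $V_p$ is convex. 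The one delicate point to double-check is that the openness hypothesis on $\exp_x$ is genuinely what upgrades the continuous bijection to a homeomorphism (an open continuous bijection has continuous inverse), so that no properness of $\exp_x$, which need not hold, is ever invoked.
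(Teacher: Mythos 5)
Your proof is correct and follows the same route as the paper's: the injectivity of $D$ on $\exp_p(V_p)$ is exactly the preceding lemma, and $D(\exp_p(V_p))=\exp^X_{D(p)}(\rT_{D(p)}X)=X$ by surjectivity of $\exp^X$, so the whole statement reduces to the convexity of $X$ itself. The only difference is that the paper simply asserts ``we already know that $X$ is convex,'' whereas you actually verify both axioms --- in particular you obtain axiom (b) by observing that $\exp_x$ is a continuous open bijection for an injective structure, hence a homeomorphism with continuous inverse $\log_x$ --- which is a worthwhile detail the paper leaves implicit.
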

\begin{proof}
We already know that $D(\exp_p(V_p))=X$ is convex. Furthermore the injectivity on $\exp_p(V_p)$ comes from the preceding lemma.
\end{proof}

This allows to state another version of theorem \ref{thm-convexity}.
\begin{corollary}[Theorem \ref{thm-convexity} for injective structures]
Let $M$ be a connected $(X,\exp)$-geodesic manifold, with $(X,\exp)$ an injective geodesic structure.
The following properties are equivalent.
\begin{enumerate}
\item The developing map $D\colon\widetilde M \to X$ is a diffeomorphism.
\item For all $p\in \widetilde M$, $V_p=\rT_p\widetilde M$.
\item There exists $p\in \widetilde M$ such that $V_p=\rT_p\widetilde M$.
\end{enumerate}
\end{corollary}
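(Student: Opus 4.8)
The plan is to deduce this corollary directly from Theorem \ref{thm-convexity} by showing that, under the injectivity hypothesis on $(X,\exp)$, the condition $V_p=\rT_p\widetilde M$ already forces $V_p$ to be convex, so that the three numbered statements here collapse onto the three statements of the theorem. The heart of the matter is therefore not a fresh argument but the observation, already recorded in Proposition \ref{prop-convgeocomp}, that for injective structures completeness at a point implies convexity at that point.

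Concretely, I would argue as follows. To see that statement \emph{1} implies statement \emph{2}, I invoke Theorem \ref{thm-convexity}: if $D$ is a diffeomorphism then for every $p$ the set $V_p$ is convex and equal to $\rT_p\widetilde M$, and in particular $V_p=\rT_p\widetilde M$. The implication from \emph{2} to \emph{3} is trivial, since a statement holding for all $p$ holds for some $p$. The only implication requiring the injectivity hypothesis is \emph{3} implies \emph{1}: assuming there exists $p$ with $V_p=\rT_p\widetilde M$, Proposition \ref{prop-convgeocomp} tells us that this same $V_p$ is automatically convex. Thus $p$ satisfies statement \emph{3} of Theorem \ref{thm-convexity} (the set $V_p$ is convex and equal to $\rT_p\widetilde M$), and that theorem yields that $D$ is a diffeomorphism.

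I do not expect a genuine obstacle here, because all the analytic work has been front-loaded into the earlier results. The only subtlety is bookkeeping: I must be careful that the \emph{convexity} of $V_p$ invoked in Proposition \ref{prop-convgeocomp} is exactly the same notion of convexity (convexity of the subset $C_p\subset V_p$, meaning $\exp_p(C_p)$ is convex in $\widetilde M$, i.e.\ $D(\exp_p(V_p))$ is convex in $X$ and $D$ is injective on $\exp_p(V_p)$) that Theorem \ref{thm-convexity} requires. Tracing the definitions, Proposition \ref{prop-convgeocomp} establishes precisely that $D(\exp_p(V_p))=X$ is convex and that $D$ restricted to $\exp_p(V_p)$ is injective, which is the definition of $V_p$ being convex. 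Hence the hypotheses of Theorem \ref{thm-convexity} are met verbatim, and the equivalence of the three weakened statements follows at once.
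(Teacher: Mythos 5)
Your proposal is correct and follows exactly the route the paper intends: the corollary is stated immediately after Proposition \ref{prop-convgeocomp} precisely so that the implication \emph{3}$\Rightarrow$\emph{1} reduces to that proposition combined with Theorem \ref{thm-convexity}, and your careful check that the notion of convexity matches is the only point worth verifying. Nothing is missing.
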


\subsection{Geodesic structures with compatible holonomy}

The preceding section addressed the question of the topology of $\widetilde M$. However, it is natural to ask what a geodesic structure on $M$ implies on $M$. To do so, we need to make the assumption that a transformation of the fundamental group $\pi_1(M)$ does not change the geodesic nature of a curve.

A \emph{$(G,X)$-structure} in the sense of Thurston~\cite{Thurston} is the pair of a smooth space $X$ together with a transitive group $G$ of analytic diffeomorphism acting on $X$. A manifold $M$ with a $(G,X)$-structure is called a \emph{$(G,X)$-manifold}. That is the case if $M$ has charts over $X$ with transitional maps in $G$\footnote{Our definition is here less general than Thurston's, since he allows $G$ to be only a pseudogroup. (Compare also with his book \cite{ThurstonBook}.)}.
If $M$ is a $(G,X)$-manifold, then we can construct a pair $(D,\rho)$ of the \emph{developing map} $D\colon\widetilde M\to X$ and the \emph{holonomy morphism} $\rho\colon\pi_1(M)\to G$. The developing map is a local diffeomorphism and those two maps are equivariant: $D(gx)=\rho(g)D(x)$ for any $x\in \widetilde M$ and $g\in\pi_1(M)$.

\begin{definition}Let $(G,X)$ be a geometrical structure in the sense of Thurston.
Let $(X,\exp)$ be a geodesic structure. Then $(G,X,\exp)$ is a \emph{geodesic structure with compatible holonomy}, if for any geodesic segment $\gamma\colon\iI\to X$ and any $g\in G$, the curve $g\gamma$ is again a geodesic segment.

If $M$ is a $(G,X)$-manifold in the sense of Thurston, then with no additional assumption, $M$ is a \emph{$(G,X,\exp)$-manifold}. The developing map $D$ is the developing map of  Thurston's structure. 
\end{definition}

\begin{proposition}\label{prop-convhol}
Let $M$ be a connected $(G,X,\exp)$-manifold.
\begin{enumerate}
\item If $\gamma\colon\iI\to \widetilde M$ is a geodesic segment based in $p$, then for any $g\in \pi_1(M)$, $g\gamma$ is a geodesic segment based in $gp$.
\item In particular, if for $p\in \widetilde M$ and $v\in\rT_p\widetilde M$, $\exp_p(tv)$ is only defined for $t<1$, then for any $g\in \pi_1(M)$, $g\exp_p(tv)$ is again only defined for $t<1$.
\item If $C\subset \widetilde M$ is convex, then for any $g\in \pi_1(M)$, $gC$ is again convex.
\end{enumerate}
\end{proposition}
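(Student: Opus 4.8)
The plan is to reduce all three assertions to two facts the structure supplies for free: the equivariance relation $D(gx)=\rho(g)D(x)$ between developing map and holonomy, and the defining property of a geodesic structure with compatible holonomy, namely that every $h\in G$ carries geodesic segments of $X$ to geodesic segments of $X$. Each $g\in\pi_1(M)$ acts on $\widetilde M$ as a deck diffeomorphism, so $g\gamma=g\circ\gamma$ is again a smooth curve and $\dd g_p\colon\rT_p\widetilde M\to\rT_{gp}\widetilde M$ is a linear isomorphism. For the first statement I would simply transport the geodesic condition through $D$: if $\gamma$ is a geodesic segment based at $p$, then $D\circ\gamma$ is a geodesic segment of $X$ by definition, and equivariance gives $D\circ(g\gamma)=\rho(g)\circ(D\circ\gamma)$; since $\rho(g)\in G$ and the holonomy is compatible, this composition is again a geodesic segment of $X$. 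Hence $g\gamma$ is a geodesic segment of $\widetilde M$, with base point $g\gamma(0)=gp$.

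The second statement follows once I record the equivariance of the visible sets, $\dd g_p(V_p)=V_{gp}$. Indeed, $v\in V_p$ means that $t\mapsto\exp_p(tv)$ is a geodesic segment on $\iI$; by the first statement its $g$-image is a geodesic segment based at $gp$ with initial velocity $\dd g_p(v)$, and by uniqueness of a geodesic segment from its base point and initial speed it equals $\exp_{gp}(t\,\dd g_p(v))$, so $\dd g_p(v)\in V_{gp}$. This gives $\dd g_p(V_p)\subset V_{gp}$, and applying the same to $g^{-1}\in\pi_1(M)$ yields the reverse inclusion, hence equality. Now the hypothesis that $\exp_p(tv)$ is defined exactly for $t<1$ reads $tv\in V_p$ for all $t<1$ while $v\notin V_p$; by the equivariance just proved, $t\,\dd g_p(v)\in V_{gp}$ for all $t<1$ while $\dd g_p(v)\notin V_{gp}$, which is precisely the assertion that $g\exp_p(tv)=\exp_{gp}(t\,\dd g_p(v))$ is defined only for $t<1$.

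For the third statement I would first check that the $G$-action preserves convex subsets of $X$, and then combine this with equivariance and injectivity of $\rho(g)$. Fix $h\in G$ and a convex $C'\subset X$. Condition a.\ of convexity is preserved because, given $hx,hy\in hC'$, a geodesic segment from $x$ to $y$ inside $C'$ is carried by $h$ to a geodesic segment from $hx$ to $hy$ inside $hC'$. Condition b.\ is preserved because $h$ is a homeomorphism, so it commutes with closures and with limits of geodesic segments: a sequence in $hC'$ is the $h$-image of a sequence in $C'$, whose limiting geodesic segment is transported back by $h$. Thus $D(gC)=\rho(g)D(C)$ is convex in $X$. Injectivity of $D$ on $gC$ is immediate, since $D(gx)=D(gy)$ gives $\rho(g)D(x)=\rho(g)D(y)$, hence $D(x)=D(y)$ as $\rho(g)$ is a diffeomorphism, hence $x=y$ by injectivity of $D|_C$, hence $gx=gy$. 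Therefore $gC$ is convex.

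None of the three steps is genuinely difficult; the two points demanding care are the use of $g^{-1}$ to upgrade $\dd g_p(V_p)\subset V_{gp}$ to the equality required in the second statement, and, in the third statement, the verification that the limiting condition b.\ — not merely the segment condition a.\ — survives pushforward by $h$, which is exactly where one must invoke that the elements of $G$ act as homeomorphisms of $X$.
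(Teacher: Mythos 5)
Your proof is correct and follows essentially the same route as the paper, which declares the first two points clear and proves only the third via the equivariance $D(gC)=\rho(g)D(C)$ and the injectivity check you give verbatim. You merely fill in details the paper leaves implicit (the transport of the geodesic condition through $D$, the equality $\dd g_p(V_p)=V_{gp}$ via $g^{-1}$, and the preservation of conditions a.\ and b.\ of convexity under elements of $G$), all of which are sound.
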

\begin{proof}
The first two properties are clear, we prove the third. Let $C\subset \widetilde M$ be convex and let $g\in \pi_1(M)$. By equivariance, $D(gC)=\rho(g)D(C)$ is convex in $X$. It suffices to show that the developing map is injective when restricted to $gC$. Let $gx_1,gx_2\in gC$ and suppose that $D(gx_1)=D(gx_2)$ then $\rho(g)D(x_1)=\rho(g)D(x_2)$ and therefore $D(x_1)=D(x_2)$, implying $x_1=x_2$ and hence $gx_1=gx_2$.
\end{proof}

This shows that if we fix $x\in M$ and a curve $c\colon\iI \to M$ based in $x$, then if any lift of $c$ in $\widetilde M$ is geodesic, then this is in fact the case for any lift. Furthermore by this same proposition, if for $p\in \pi^{-1}(x)$, we have $V_p=\rT_p\widetilde M$, then it is again true for any $q\in \pi^{-1}(x)$. If this is the case, we denote $V_x=\rT_xM$.
In general, if we want to define $V_x\subset \rT_xM$ as $V_p$ for a $p\in\pi^{-1}(M)$, it depends on the choice of $p$, and a change in $p$ gives an isomorphism acting on $\rT_xM$.

The following corollary interprets theorem \ref{thm-convexity} in this framework.

\begin{corollary}\label{cor-convexity}
Let $M$ be a connected $(G,X,\exp)$-geodesic manifold.
The following properties are equivalent.
\begin{enumerate}
\item The developing map $D\colon\widetilde M \to X$ is a diffeomorphism.
\item For all $x\in M$, the subset $V_x$ is convex and equal to $\rT_x M$.
\item There exists $x\in M$ such that $V_x$ is convex and equal to $\rT_xM$.
\end{enumerate}
\end{corollary}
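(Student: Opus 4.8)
The plan is to obtain this corollary as a direct translation of Theorem \ref{thm-convexity} from the universal cover $\widetilde M$ to $M$ via the covering projection $\pi\colon\widetilde M\to M$. Condition 1 is literally identical in the two statements, so the only genuine content is to verify that the two properties occurring in conditions 2 and 3 — namely that $V_x$ equals the whole tangent space and that $V_x$ is convex — are well-defined on $M$, that is, independent of the choice of a lift $p\in\pi^{-1}(x)$. This is exactly the role of Proposition \ref{prop-convhol}.

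First I would recall that, since $\pi$ is a covering and hence a local diffeomorphism, $\dd\pi_p\colon\rT_p\widetilde M\to\rT_x M$ is an isomorphism for every $p\in\pi^{-1}(x)$, and $V_x\subset\rT_x M$ is by definition the image of $V_p$ under this isomorphism. For two lifts $p,q\in\pi^{-1}(x)$ there is a unique deck transformation $g\in\pi_1(M)$ with $q=gp$. By the first assertion of Proposition \ref{prop-convhol}, a geodesic segment based at $p$ with initial speed $v\in V_p$ is carried by $g$ to a geodesic segment based at $gp$ with initial speed $\dd g_p(v)$, so $\dd g_p(V_p)\subset V_{gp}$; applying the same to $g^{-1}$ gives the reverse inclusion, whence $\dd g_p(V_p)=V_{gp}=V_q$. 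As $\dd g_p$ is a linear isomorphism, the condition $V_p=\rT_p\widetilde M$ holds for one lift if and only if it holds for every lift, and this is precisely the condition $V_x=\rT_x M$. Likewise, by the third assertion of Proposition \ref{prop-convhol} (applied to $g$ and to $g^{-1}$), $\exp_p(V_p)$ is convex if and only if $\exp_{gp}(V_{gp})$ is convex, so convexity of $V_x$ is independent of the lift as well.

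With this well-definedness in hand the equivalences follow immediately. Since $\pi$ is surjective, quantifying over all $x\in M$ is the same as quantifying over all $p\in\widetilde M$, and for a fixed $x$ with a chosen lift $p$ the property ``$V_x$ convex and equal to $\rT_x M$'' unwinds to ``$V_p$ convex and equal to $\rT_p\widetilde M$''; thus condition 2 of the corollary is equivalent to condition 2 of Theorem \ref{thm-convexity}. For condition 3, choosing an $x\in M$ together with any lift $p$, and conversely projecting a given $p$ to $x=\pi(p)$, shows that ``there exists $x$'' matches ``there exists $p$''. Invoking Theorem \ref{thm-convexity} then closes the chain of equivalences with condition 1.

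The only step requiring any care — the main obstacle, such as it is for a corollary — is precisely the deck-invariance established above: once $V_x$ and its two properties are recognized as honest invariants of the point $x\in M$ rather than artifacts of a chosen lift, the statement is nothing more than a restatement of Theorem \ref{thm-convexity}.
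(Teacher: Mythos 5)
Your proof is correct and follows essentially the same route as the paper: the paper's own argument likewise reduces the corollary to Theorem \ref{thm-convexity} by invoking the discussion after Proposition \ref{prop-convhol} for the lift-independence of $V_p=\rT_p\widetilde M$ and part 3 of that proposition for the lift-independence of convexity. You have merely written out in full the deck-transformation bookkeeping that the paper leaves implicit.
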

\begin{proof}
By theorem \ref{thm-convexity}, we only need to verify that propositions \emph{2} and \emph{3} correspond to propositions \emph{2} and \emph{3} of theorem \ref{thm-convexity}. But this follows from the preceding discussion and the preceding proposition (the fact \emph{3} concerning convexity).
\end{proof}

The next two propositions give topological properties on $M$.

\begin{proposition}\label{prop-trivconv}
Let $x\in M$ and $\pi\colon\widetilde M \to M$ be the universal cover. Then there exists an open trivializing neighborhood $U\ni x$ such that $\pi^{-1}(U)$ is a disjoint union of convex open subsets.
\end{proposition}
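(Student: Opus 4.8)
The plan is to produce the required $U$ as the image under $\pi$ of a single convex sheet, and then to recognize the remaining sheets as the orbit of that sheet under the deck group, whose convexity is automatic by the already established invariance of convexity under $\pi_1(M)$.

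First I would fix a point $p\in\pi^{-1}(x)$ and, using that $\pi$ is a covering map, choose a connected trivializing neighborhood $U_0\ni x$; let $W_0$ be the sheet of $\pi^{-1}(U_0)$ containing $p$, so that $\pi|_{W_0}\colon W_0\to U_0$ is a diffeomorphism. By the local convexity of $\widetilde M$, there is an open convex neighborhood $W\subset W_0$ of $p$. Such a $W$ is automatically connected: since $D|_W$ is an injective local diffeomorphism it is a homeomorphism onto the convex set $D(W)$, and a convex set is path-connected by its geodesic segments. I then set $U:=\pi(W)$. As $\pi|_{W_0}$ is a diffeomorphism, $U$ is an open connected neighborhood of $x$ contained in $U_0$; being an open subset of an evenly covered set, it is itself evenly covered, and $W$ is precisely the sheet of $\pi^{-1}(U)$ through $p$, convex by construction.

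The core of the argument is then to identify all sheets of $\pi^{-1}(U)$ with the translates $gW$, $g\in\pi_1(M)$. Because $U$ is connected and contained in $U_0$, the sheets of $\pi^{-1}(U)$ are exactly the connected components of $\pi^{-1}(U)$, each mapped diffeomorphically onto $U$ and each containing exactly one point of the fiber $\pi^{-1}(x)$. Every deck transformation $g$ satisfies $\pi\circ g=\pi$, so $g$ is a diffeomorphism of $\widetilde M$ that permutes $\pi^{-1}(U)$ and carries the component $W$ (through $p$) to the component through $gp$. Since $g\mapsto gp$ is a bijection from $\pi_1(M)$ onto $\pi^{-1}(x)$, this yields
\begin{equation}
\pi^{-1}(U)=\bigsqcup_{g\in\pi_1(M)} gW.
\end{equation}
Each $gW$ is convex: this is exactly the third assertion of proposition \ref{prop-convhol}, guaranteeing that a deck transformation sends a convex set to a convex set. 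Hence $\pi^{-1}(U)$ is a disjoint union of convex open subsets.

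The main obstacle is the covering-space bookkeeping of the previous paragraph, namely ensuring that the orbit $\{gW\}$ is genuinely the full and \emph{disjoint} family of sheets over $U$, rather than a subfamily with possible overlaps. I expect this to be handled cleanly by forcing $U$ to be connected---so that sheets coincide with connected components---and by invoking the freeness and transitivity of the deck action on each fiber. The convexity of the individual sheets is then not the difficulty, being delivered directly by proposition \ref{prop-convhol}.
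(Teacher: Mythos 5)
Your proposal is correct and follows essentially the same route as the paper: shrink one sheet of a trivializing neighborhood to a convex open set using local convexity of $\widetilde M$, take $U$ to be its projection, and obtain the remaining sheets as the deck-group translates, whose convexity is guaranteed by proposition \ref{prop-convhol}. The paper's own proof is just a terser version of this, leaving the covering-space bookkeeping (disjointness and exhaustiveness of the translates) implicit.
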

\begin{proof}
Let $U$ be any open trivializing neighborhood of $x$. Let $V$ be a connected open in $\pi^{-1}(U)$, containing $p\in\pi^{-1}(x)$. Since $\widetilde M$ is locally convex, we can reduce $V$ into $V'$ such that $V'$ is convex, open and contains $p$. Now, for any $g\in \pi^{-1}(M)$, $gV'$ is again convex and contains $gp\in \pi^{-1}(x)$. Since $\pi(gV')=\pi(V')\subset U$, this is an open trivializing neighborhood of $x$.
\end{proof}

\begin{proposition}
Let $x\in M$. If $V_x$ is convex and open, then any homotopy class with fixed endpoints of a continuous curve $c\colon\iI\to M$ with $c(0)=x$ is realized by a geodesic.
\end{proposition}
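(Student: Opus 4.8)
The plan is to fix a lift $p\in\pi^{-1}(x)$ of $x$ in the universal cover and to reformulate the hypothesis as the assertion that every point of $\widetilde M$ is visible from $p$. Indeed, since $V_x$ is convex and open, the associated set $V_p\subset\rT_p\widetilde M$ is convex and open as well (convexity and openness being preserved under the isomorphisms that relate the visible sets at the various lifts of $x$), so the earlier proposition showing that a convex open visible set satisfies $\exp_p(V_p)=\widetilde M$ applies. Consequently, from $p$ one can reach any point of $\widetilde M$ by a geodesic segment.

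Next I would invoke the standard correspondence between homotopy classes of paths and endpoints of lifts. Given a continuous curve $c\colon\iI\to M$ with $c(0)=x$ and $c(1)=y$, its unique lift $\widetilde c$ with $\widetilde c(0)=p$ ends at a point $q=\widetilde c(1)\in\pi^{-1}(y)$. Because $\widetilde M$ is simply connected, the monodromy theorem guarantees that this endpoint $q$ depends only on the homotopy class of $c$ with fixed endpoints, and conversely that the class is completely determined by $q$. Fixing the homotopy class therefore amounts to fixing the target point $q\in\widetilde M$.

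Then I would construct the geodesic representative. Since $q$ is visible from $p$, there is a geodesic segment $\gamma\colon\iI\to\widetilde M$ from $p$ to $q$, and its projection $\pi\circ\gamma$ is a curve in $M$ from $x$ to $y$. This projection is geodesic in the sense of the structure: $\gamma$ is one of its lifts, and by proposition \ref{prop-convhol} being a geodesic segment is preserved by deck transformations, so a curve of $M$ is geodesic as soon as a single one of its lifts is. Finally, $\gamma$ and $\widetilde c$ are two paths in the simply connected space $\widetilde M$ sharing both endpoints, hence homotopic rel endpoints; projecting this homotopy down to $M$ shows that $\pi\circ\gamma$ is homotopic to $c$ with fixed endpoints. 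Thus the homotopy class of $c$ is realized by the geodesic $\pi\circ\gamma$.

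The argument is essentially bookkeeping built on the earlier $\exp_p(V_p)=\widetilde M$ proposition. The only points requiring genuine care, and hence the main obstacle, are the precise identification of homotopy classes with fixed endpoints with endpoints of lifts based at $p$, and the verification that the projection of a geodesic segment of $\widetilde M$ is again geodesic in $M$, which rests entirely on the holonomy-compatibility of the geodesic structure furnished by proposition \ref{prop-convhol}.
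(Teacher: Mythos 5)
Your argument is correct and follows exactly the paper's route: pick a lift $p$ of $x$ with $V_p$ convex and open, apply the earlier proposition to get $\exp_p(V_p)=\widetilde M$, take the geodesic segment from $p$ to the endpoint of the lift of $c$, and project. You merely spell out the standard covering-space bookkeeping (endpoint of the lift determines the homotopy class, simple connectedness of $\widetilde M$ gives the homotopy rel endpoints) that the paper leaves implicit.
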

\begin{proof}
If $V_x$ is convex and open, then this means that there exists $p\in \pi^{-1}(x)$ such that $V_p$ is convex and open. But then $\exp_p(V_p)=\widetilde M$. Let $\widetilde c$ be the lift of $c$ in $\widetilde M$ based at $p$. Then there exists a geodesic segment from $p$ to $\widetilde c(1)$. This geodesic segment realizes the homotopy class of $c$ when projected by $\pi$ to $M$.
\end{proof}

This last proposition is to be compared with the Hopf-Rinow theorem. Indeed, suppose that $M$ is a compact 
Riemannian manifold with negative or null constant curvature. Then for the suitable $(X,\exp)$  (either the Euclidean or the hyperbolic space) and by compacity of $M$, the Hopf-Rinow theorem gives that for any $x\in M$, $V_x=\rT_xM$. This shows that $V_x$ is also convex by injectivity of the structure. The preceding proposition then shows  that any homotopic curve can be supposed geodesic. This representative geodesic is unique by injectivity of the geodesic structure $(X,\exp)$.

This phenomenon was already known, but with various other proofs involving other methods.

\subsubsection*{Injective structures with compatible holonomy}

Again, an additional injective hypothesis on the geodesic structure $(X,\exp)$ allows additional results. For example, it is not hard to reformulate corollary \ref{cor-convexity} as we did for theorem \ref{thm-convexity}: the condition $V_x=\rT_xM$ implies that $V_x$ is convex.
But the following construction is specific to an injective structure with compatible holonomy. It comes from the fact that in $X$, we can define $\exp_x^{-1}\colon X\to \rT_xX$, and that was not possible without the injectivity hypothesis.

This result appears in Miner's article \cite{Miner}. It is essential to the proof of Fried's theorem, and was used in Fried's original work \cite{Fried} without being stated independently.

\begin{proposition}\label{prop-convG}
Suppose that $(G,X,\exp)$ is an injective geodesic structure with compatible holonomy. Let $M$ be a connected $(G,X,\exp)$-manifold. Let $p\in \widetilde M$ and $g\in\pi_1(M)$. Suppose that $gp=\exp_p(u)\in \exp_p(V_p)$. We define
\begin{equation}
G = \dd D_p^{-1}\circ (\exp_{D(p)}^X)^{-1}\circ \rho(g)\circ \exp_{D(p)}^X\circ \dd D_p.
\end{equation}
The following properties are true.
\begin{enumerate}
\item We have $G(0)=u$.
\item If $v\in V_p\cap G^{-1}(V_p)$ and if $g\exp_p(v)\in \exp_p(V_p)$, then
\begin{equation}
\exp_p^{\widetilde M}(G(v))=g\exp_p^{\widetilde M}(v).
\end{equation}
\item Suppose that $C_p\subset V_p$ is a convex open subset containing $u$. If for $w\in \rT_p\widetilde M$ we have $G(w)\in C_p$ then $w\in V_p$.
\end{enumerate}
\end{proposition}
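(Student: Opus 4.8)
The plan is to handle the three statements in order, since the first two are direct chart computations and the third is the substantive one. Throughout I would rely on the two basic identities attached to a developing map: the equivariance $D(gx)=\rho(g)D(x)$ and the relation $D(\exp_p^{\widetilde M}(v))=\exp_{D(p)}^X(\dd D_p(v))$, together with injectivity of the geodesic structure (which makes $(\exp_{D(p)}^X)^{-1}$ a globally defined map on $X$, so that $G$ is defined on all of $\rT_p\widetilde M$) and the earlier lemma that $D$ is injective on $\exp_p(V_p)$.

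For Property \emph{1}, I would simply track $0$ through the composition. Since $\dd D_p(0)=0$ and $\exp^X_{D(p)}(0)=D(p)$, equivariance gives $\rho(g)(D(p))=D(gp)$; because $gp=\exp_p^{\widetilde M}(u)$ the developing identity shows $\dd D_p(u)$ is the unique preimage of $D(gp)$ under $\exp^X_{D(p)}$, and applying $\dd D_p^{-1}$ returns $u$. For Property \emph{2}, I would apply $D$ to both candidate points: under the hypotheses $\exp_p^{\widetilde M}(G(v))$ and $g\exp_p^{\widetilde M}(v)$ both lie in $\exp_p(V_p)$, and unwinding the definition of $G$ with the developing identity and equivariance yields $D(\exp_p^{\widetilde M}(G(v)))=\rho(g)D(\exp_p^{\widetilde M}(v))=D(g\exp_p^{\widetilde M}(v))$, so injectivity of $D$ on $\exp_p(V_p)$ forces equality.

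Property \emph{3} is the heart of the statement and the main obstacle; the idea is to manufacture the geodesic witnessing $w\in V_p$ by transporting a geodesic that already lives inside the convex visible set $C_p$. I would set $r:=\exp_p^{\widetilde M}(G(w))$, which is legitimate since $G(w)\in C_p\subset V_p$. The points $gp=\exp_p^{\widetilde M}(u)$ and $r$ both lie in $\exp_p(C_p)$ because $u,G(w)\in C_p$; since $C_p$ is convex, $\exp_p(C_p)$ is a convex subset of $\widetilde M$, so there is a geodesic segment from $gp$ to $r$ inside it. Applying $g^{-1}$ and invoking compatible holonomy (Proposition \ref{prop-convhol}, part \emph{1}) turns this into a geodesic segment from $p$ to $g^{-1}r$, defined on all of $\iI$; hence $g^{-1}r$ is visible from $p$.

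It then remains to identify the initial velocity of this segment with $w$. Equivariance gives $D(g^{-1}r)=\rho(g)^{-1}D(r)=\rho(g)^{-1}\rho(g)\exp_{D(p)}^X(\dd D_p(w))=\exp_{D(p)}^X(\dd D_p(w))$, so the developed segment runs from $D(p)$ to $\exp_{D(p)}^X(\dd D_p(w))$; by injectivity of $(X,\exp)$ it is the unique such geodesic $t\mapsto\exp_{D(p)}^X(t\,\dd D_p(w))$, whose initial velocity is $\dd D_p(w)$. Pulling back through $\dd D_p^{-1}$ shows the segment in $\widetilde M$ has initial velocity $w$, and since it is defined on $\iI$ we conclude $w\in V_p$. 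The main delicacy is bookkeeping the three distinct roles of injectivity — of $D$ on $\exp_p(V_p)$, of the geodesic structure in $X$, and of $\exp^X_{D(p)}$ as a global inverse — and checking that the segment produced by convexity is genuinely defined on the whole interval \emph{before} it is transported by $g^{-1}$.
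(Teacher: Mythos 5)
Your proof is correct and follows essentially the same route as the paper: both arguments hinge on taking the geodesic segment from $gp=\exp_p(u)$ to $\exp_p(G(w))$ supplied by the convexity of $\exp_p(C_p)$, transporting it by $g^{-1}$ using compatible holonomy, and identifying the resulting full segment based at $p$ with $t\mapsto\exp_p(tw)$ via the developing map and the uniqueness of geodesics. Your version merely streamlines the paper's bookkeeping (the paper tracks the curve $t\mapsto\exp_p(G(tw))$ and first checks the identity $g\exp_p(tw)=\exp_p(G(tw))$ for small $t$, whereas you identify the developed image globally), but the ingredients and the logic are the same.
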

\begin{proof}
Property \emph{1} is immediate by definition. We show that \emph{2} is true. Let $v\in V_p\cap G^{-1}(V_p)$.
\begin{align}
D\left(g\exp_p^{\widetilde M}(v)\right) &= \rho(g) \exp_{D(p)}^X\left(\dd D_p(v)\right) \\
D\left(\exp_p^{\widetilde M}(G(v)\right) = \exp_{D(p)}^X\left(\dd D_p(G(v))\right) &= \rho(g) \exp_{D(p)}^X\left(\dd D_p(v)\right)
\end{align}
This shows that $\exp_p(G(v))$ and $g\exp_p(v)$ have the same developing image. The geodesic segment $\gamma\colon\iI \to X$ joining $D(p)$ to $D(g\exp_p(v))=D(\exp_p(G(v))$ is unique. Since the developing map is injective on $\exp_p(V_p)$, it suffices that both $g\exp_p(v)$ and $\exp_p(G(v))$ be visible from $p$,
and they are by assumption.

Now we show that \emph{3} is true. For $t\geq 0$ small enough, $tw$ belongs both to $V_p$ and $G^{-1}(V_p)$ since $G$ is continuous and $G(0)=u\in C_p$. Also, for $t$ small enough, $g\exp_p(tw)\in\exp_p(V_p)$ since it is a neighborhood of $gp$ by the existence of $C_p$. Therefore, for $t\geq 0$ small enough, we have $g\exp_p(tw)=\exp_p(G(tw))$.

Take a look at the linear segment $tw$ for $t\in \iI$. The map $\exp_{D(p)}\circ \dd D_p$ transforms this segment into a geodesic segment. The map $\rho(g)$ transforms this geodesic segment into a geodesic segment, say $c(t)\colon\iI\to X$. The maps $G$ and $c$ are related by $\exp_{D(p)}(\dd D_p(G(tw))) = c(t)$. But the endpoints of $c(t)$ and $D(\exp_p(G(tw)))$ are $D(\exp_p(G(0)))=D(gp)$ at $t=0$ and $D(\exp_p(G(w))$ at $t=1$, and both belong to $D(\exp_p(C_p))$. Hence, by unicity of the geodesic and by convexity of $\exp_p(C_p)$, the lift of $c$ based in $\exp_p(G(0))$ is exactly $\exp_p(G(tw))$. Hence, $\exp_p(G(tw))$ is well defined for all $t\in \iI$.

Now, the equation $g\exp_p(tw)=\exp_p(G(tw))$ was only true for $t$ small enough. But clearly, $g^{-1}\exp_p(G(tw))$ is always defined, and for $t$ small enough we have that $g^{-1}\exp_p(G(tw))=\exp_p(tw)$. Hence $g\exp_p(tw)$ is defined for every $t\in \iI$. It follows that $w\in V_p$.
\end{proof}

Note that  fact \emph{2} shows an equivariance between $\pi_1(M)$ and $\exp_M$. Fact \emph{3} allows to deduce visible vectors from a convex subset.

\section{Nilpotent similarity structures}\label{sec-3}

Let $\mathfrak g$ be a Lie algebra with the decomposition
\begin{equation}
\mathfrak g = \mathfrak m \oplus \mathfrak a \oplus \mathfrak n.
\end{equation}
Let $G,M,A,N$ be Lie groups of the Lie algebras $\mathfrak g , \mathfrak m ,\mathfrak a ,\mathfrak n$. Suppose that
\begin{itemize}
\item the Lie algebra $\mathfrak n$ is nilpotent and the subgroup $N$ is simply connected (therefore $\exp\colon\mathfrak n\to N$ is a diffeomorphism);
\item the subgroup $MA$ normalizes $N$ and centralizes $A$;
\item there exists an isomorphism $\alpha\colon\mathfrak a\to \mathbf R$, a basis $(e_1,\dots,e_n)$ of $\mathfrak n$ and constants $d_i\geq 1$ such that for any $a\in\mathfrak a$, $\mathop{[}a,e_i\mathop{]}=\alpha(a)d_i$;
\item the group $M$ is compact and is orthogonal for the Euclidean structure $\sum x_i^2$ of $\mathfrak n$ where $(x_1,\dots,x_n)$ is the coordinate system associated to the previous basis $(e_1,\dots,e_n)$;
\item the adjoint representation $\Ad_A$ is injective and $\exp\colon\mathfrak a\to A$ is surjective.
\end{itemize}

The fact that $\mathfrak m \oplus \mathfrak a$ normalizes $\mathfrak n$ implies that the adjoint representation $\ad_G$ defines a restricted map $\ad_{MA}\colon \mathfrak m\oplus \mathfrak a \to \mathfrak{gl}(\mathfrak n)$.
Therefore we see $MA$ as acting on $N$ by conjugation and $N$ acting on itselt by left translation. This action of $G$ on $N$ gives a \emph{nilpotent similarity structure} $(G,\cN)$, where $\cN=N$ is denoted differently to emphasize the fact that the Lie group is thought as a space and not only as a group. We also denote $G$ by $\Sim(\cN)$ when this does not cause any ambiguity.

We can compute  the action of $A$ on $N$ more explicitly. Let $\alpha\colon\mathfrak a\to\mathbf{R}$ be an isomorphism and $(e_1,\dots,e_n)$ be a basis of $\mathfrak n$. By hypothesis for any $a\in\mathfrak a$, we have $[a,e_i] = \alpha(a)d_i$, with $d_i\geq 1$. Therefore
\begin{equation}
\ad_{A}=
\begin{pmatrix}
\alpha d_1 \\ &\alpha d_2 \\ &&\ddots \\ &&&\alpha d_n
\end{pmatrix}
\end{equation}
with zeroes in the blanks.
The equality $\exp(\ad_A(a))=\Ad_{\exp_A(a)}$ gives
\begin{equation}
\Ad_{\exp_A}=
\begin{pmatrix}
\exp(\alpha)^{d_1} \\ &\exp(\alpha)^{d_2} \\ &&\ddots \\ &&&\exp(\alpha)^{d_n}\label{eq-dilatationmatrix}
\end{pmatrix}
.
\end{equation}
By changing $\exp(\alpha)\in\mathbf{R}_+$ for $t\in \mathbf{R}_+$, we get  $A$ equal to $\{\delta_t\}_{t\in \mathbf{R}_+}$ such that $\delta_t x_i = t^{d_i}x_i$ for any $x_i\in \mathbf{R}e_i$. Such a group is called a \emph{dilatation group}.

\paragraph{Example 1}\label{ex-1-iwasawa}
For each semisimple Lie group, there is an Iwasawa decomposition $KAN$ (see \cite{Knapp}) such that $K$ is compact, $A$ is abelian and $N$ is nilpotent. Take $\mathfrak k , \mathfrak a$ and $\mathfrak n$ the corresponding Lie algebras of $K$, $A$ and $N$. Take $\mathfrak m$ the centralizer of $\mathfrak a$ in $\mathfrak k$. Now, reduce $\mathfrak a$ such that it has real dimension one. Then this gives a nilpotent similarity structure. (See \cite[proposition 6.40]{Knapp}.)

In particular, the boundary geometries of the different hyperbolic spaces (with base field $\mathbf F$ that can be the field of real or complex or quaternionic or octonionic numbers) -- that is $(G,X)$ structures with $G = {\rm PU}_\mathbf{F}(n,1)$ and $X=\partial \mathbf H_\mathbf{F}^n$ -- give  the subgeometries of a stabilized point $({\rm PU}_\mathbf{F}(n,1)_p, \partial \mathbf H_\mathbf{F}^n-\{p\})$ as nilpotent similarity structures.

\paragraph{Example 2}
Let $N$ be a Carnot group  \cite{Pansu}. That is to say, let $\mathfrak n = \mathfrak n_1\oplus \dots\oplus \mathfrak n_r$ be a nilpotent graded Lie algebra such that $[\mathfrak n_1,\mathfrak n_i]=\mathfrak n_{i+1}$. A group $N$ is a Carnot group if it corresponds to such a Lie algebra $\mathfrak n$ and is simply connected. A dilatation group $A$ acting by automorphisms on $N$ is naturally given by $\delta_t x_1 = tx_1$ for $x_1\in \mathfrak n_1$. For any other $x_i\in \mathfrak n_i$ we have by construction $\delta_t x_i = t^{i}x$. In particular, it is a dilatation group where $d_i$ is always an integer. For example $H$-type groups (e.g. the group of Heisenberg) are Carnot groups.

\paragraph{Example 3}Damek and Ricci  \cite{Damek1,Damek2} defined a class of harmonic Riemannian spaces that may not be coming from symmetric spaces. If $N$ is a $H$-type group, then in particular $N$ is a two-step nilpotent Lie group equipped with an inner product. The Lie algebra of $N$ naturally decomposes itself into $\mathfrak n=\mathfrak v\oplus \mathfrak z$, with $\mathfrak z$ being the center of $\mathfrak n$ and $\mathfrak v$ the orthogonal of $\mathfrak z$. Then (following \cite{Damek1}) one can take the Lie algebra  $\mathfrak n \oplus \R T$, where $T$ is the transformation given by
\begin{equation}
\ad(T)(X+Z) = [T,X+Z]= \frac 12 X + Z,
\end{equation}
with $X\in\mathfrak v$ and $Z\in\mathfrak z$. Let $S=NA$ be the corresponding simply connected Lie group extension of $\mathfrak n\oplus\R T$. By denoting $A(s)=\exp_S(sT)$,  $A(s)$ acts on $N$ by $A(s)(z,x) = (\sqrt sz,sx)$. By substituting $s=t^2$ (we could also have taken $2T$ instead of $T$), we get $A(t)(z,x)=(tz,t^2x)$. This defines a nilpotent similarity structure $(S,N)$.

\paragraph{}
Consider $(\Sim(\cN),\cN)$ a nilpotent similarity structure.
If $\omega_\cN$ denotes the Maurer-Cartan form on $\cN$, then we can define $\exp_x(tu)$ as the integral line of the left-invariant vector field $\omega_\cN X = u$. Now, by construction $MA$ leaves invariant the set of $N$-left-invariant vector fields on $\cN$. Indeed, denote by $\rho$ the conjugation by $P\in MA$: $\rho(x)=PxP^{-1}$ for $x\in \cN$. Also, let $X= (L_x)_* u$ be the left-invariant vector field such that $\omega_\cN X= u\in\mathfrak n$.
\begin{align}
\rho L_x &= L_{\rho(x)}\rho\\
\implies \rho_* \left(L_x\right)_* &= \left(L_{\rho(x)}\right)_* \rho_*\\
\implies \rho_* X &= \rho_* \left(L_x\right)_* u\\
&= \left(L_{\rho(x)}\right)_* \rho_* u
\end{align}
Therefore, the action of $MA$ on the vector field $X$ gives again a left-invariant vector field on $\cN$.

This allows to define a geodesic structure with compatible holonomy, denoted $(\Sim(\cN),\cN,\exp)$ and where $\exp$ is the Lie exponential map $\exp\colon\mathfrak n \to \cN$. Since $N$ is nilpotent, the exponential gives a diffeomorphism from $\mathfrak n$ to $\cN$. Hence, this structure is injective.
Note that geodesics from $0$ are given by $\exp(tu)$, and geodesics from any $x\in \cN$ are given by $x\exp(tu)$.

This geodesic structure differs generaly from the classic Riemannian structure. Take the Heinseberg group. The classic Riemannian structure induced by a contact form  makes Legendrian curves geodesic. In particular those classic geodesics are not ours since we took as geodesics the straight lines $\exp(tu)$ (they remain straight after left-translation by the Campbell-Hausdorff formula).

\begin{lemma}\label{lem-stabcomp}
Closed $(MN,\cN)$-manifolds are complete.
\end{lemma}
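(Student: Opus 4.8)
The plan is to produce on $\cN$ a complete Riemannian metric preserved by the \emph{entire} group $MN$, and then to upgrade the developing map into a Riemannian covering. The decisive feature of $MN$, as opposed to the full $\Sim(\cN)$, is that it contains no dilatation, so it ought to act by genuine isometries; completeness should then be forced by the compactness of $M$ together with the simple connectedness of $\cN$.

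First I would construct the metric. The Euclidean structure $\sum x_i^2$ on $\mathfrak n$ is by hypothesis preserved by $M$, so I extend it to the left-invariant Riemannian metric $g$ on $\cN$ whose value at the identity is $\sum x_i^2$. By construction every left translation $L_c$, $c\in\cN$, is a $g$-isometry. I then check that each $m\in M$ is also a $g$-isometry: $m$ acts on $\cN$ by the automorphism $x\mapsto mxm^{-1}$, which fixes the identity $e$ with differential $\Ad_m$. Since $m\circ L_x = L_{mxm^{-1}}\circ m$ and $\Ad_m$ is orthogonal for $\sum x_i^2$, the chain rule gives $g_{m(x)}(\dd m\,v,\dd m\,v)=g_x(v,v)$ for every tangent vector $v$. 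Hence $MN$ acts on $(\cN,g)$ by isometries, and since the left translations alone act transitively, $(\cN,g)$ is a Riemannian homogeneous space and therefore complete.

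Next I would pull $g$ back. Setting $\widetilde g = D^*g$ on $\widetilde M$ gives a Riemannian metric, because $D$ is a local diffeomorphism, and tautologically $D$ is then a local isometry. The equivariance $D\circ\gamma=\rho(\gamma)\circ D$, together with the fact that $\rho(\gamma)\in\Gamma\subset MN$ is a $g$-isometry, shows $\gamma^*\widetilde g=\widetilde g$ for every $\gamma\in\pi_1(M)$, so $\widetilde g$ descends to a metric $\bar g$ on $M$. As $M$ is closed, $(M,\bar g)$ is complete, hence so is its Riemannian universal cover $(\widetilde M,\widetilde g)$. Now $D\colon(\widetilde M,\widetilde g)\to(\cN,g)$ is a local isometry whose source is complete and whose target is connected, so by the classical theorem that such a map is a covering, $D$ is a covering map. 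Because $\cN$ is simply connected (being a simply connected nilpotent Lie group) and $\widetilde M$ is connected, $D$ is a diffeomorphism, and by Theorem \ref{thm-convexity} this is precisely completeness, i.e.\ $V_p=\rT_p\widetilde M$ for all $p\in\widetilde M$.

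The main obstacle is the construction and invariance of $g$: one must use carefully that $M$ acts by automorphisms fixing $e$ and orthogonally at $e$, which is exactly what upgrades orthogonality at a point to isometry of the left-invariant metric everywhere, and one must invoke the completeness of homogeneous (equivalently, left-invariant) metrics. A secondary point worth flagging is that the Riemannian geodesics of $g$ do \emph{not} coincide with the structure geodesics $x\exp(tu)$; this is harmless, since $g$ serves only to prove that $D$ is a diffeomorphism, after which structure completeness follows formally from Theorem \ref{thm-convexity}.
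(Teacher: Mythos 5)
Your proof is correct. It is, however, organized differently from the paper's: the paper simply invokes the classical fact that a closed $(G,X)$-manifold is complete whenever the point stabilizers of $G$ acting on $X$ are compact (citing Thurston), and then checks compactness of the stabilizers in $MN$ by the short computation that an element $x\mapsto m(nx)m^{-1}$ fixing a point forces $n=e$, so that stabilizers sit inside (a conjugate of) the compact group $M$. Your argument never mentions stabilizers; instead you directly build the $MN$-invariant complete Riemannian metric, pull it back, and run the standard chain ``closed $\Rightarrow$ complete $\Rightarrow$ local isometry is a covering $\Rightarrow$ diffeomorphism onto the simply connected $\cN$.'' This is exactly the proof of the classical fact the paper cites, made concrete: in general one must average a metric over the compact stabilizer to produce the invariant metric, whereas here the hypothesis that $M$ is orthogonal for $\sum x_i^2$ hands you the left-invariant metric for free, and your verification via $m\circ L_x = L_{mxm^{-1}}\circ m$ and the orthogonality of $\Ad_m$ is the right computation. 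What each approach buys: the paper's version is two lines and isolates the structural reason (compact stabilizers), while yours is self-contained modulo two standard Riemannian facts (homogeneous metrics are complete; a local isometry from a complete manifold is a covering). Your closing remark that the Riemannian geodesics of $g$ need not be the structure geodesics $x\exp(tu)$ is a worthwhile caveat, and the final reduction to Theorem \ref{thm-convexity} is harmless though not strictly needed, since in this paper completeness of the structure is by definition the statement that $D$ is a diffeomorphism onto the simply connected model.
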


By a result of Auslander \cite[th. 1]{Auslander}, if $\Gamma$ is a discrete cocompact subgroup of $MN$, then in fact $\Gamma\cap N$ has finite index in $N$ and is cocompact. Therefore closed $(MN,\cN)$-manifolds are all given (up to finite index) by cocompact subgroups of $N$.

\begin{proof}
It is a classic general fact that if a geometric structure $(G,X)$ has compact stabilizers, then closed $(G,X)$-manifolds are complete (see \cite{ThurstonBook}).
Let $m\in M$ and $n\in N$. If for $x\in \cN$, $m(nx)m^{-1}=x$ then $m(nx)=mx$, hence $nx=x$. It can only be true if $n=e$. Since $M$ is compact, $(MN,\cN)$ has compact stabilizers.
\end{proof}

From now on, we will denote  the group law of $N$ by the addition. In general, this law is not commutative, since $N$ is only nilpotent. When this will be of use, we will distinguish the addition in $\cN$ from the addition in $\mathfrak n$ by  respectively denoting $+_\cN$ and $+_\mathfrak{n}$. Note that since $\exp\colon\mathfrak n\to\cN$ is a diffeomorphism, there is a rule for exchanging $+_\cN$ and $+_\mathfrak{n}$ by looking at the corresponding coordinates in $\mathfrak n$ given by $\ln=\exp^{-1}$. The explicite rule is exactly given by the Campbell-Hausdorff formula (see e.g. \cite{Knapp}). To be more specific (with $H$ the function given by the Campbell-Hausdorff formula):
\begin{align}
\ln(x+_\cN y)&= H(\ln(x),\ln(y)),\\
H(a,b) &= a+_\mathfrak{n}b +_\mathfrak{n} \frac 12 [a,b] +_\mathfrak{n} \frac 12[a,[a,b]] -_\mathfrak{n} \frac 12 [b,[a,b]] +_\mathfrak{n}\dots\label{eq-campbell}
\end{align}

Also, we denote by product the action of $MA$ on $\cN$ by conjugation. For any $f\in \Sim(\cN)$ we define $\lambda_f\in A, P_f\in M, c_f\in \cN$ such that $f(x) = \lambda_fP_f(x) +c_f$.

\begin{lemma}\label{lem-centerform}
Suppose that for $f\in \Sim(\cN)$, there exists $\beta$ such that $f(\beta)=\beta$. Then $f(x) = \lambda_f P_f(x-\beta)+\beta$.
\end{lemma}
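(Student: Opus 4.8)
The plan is to reproduce the elementary Euclidean computation (if $f(x)=\lambda P(x)+c$ and $f(\beta)=\beta$, then $c=\beta-\lambda P(\beta)$ and so $f(x)=\lambda P(x-\beta)+\beta$), the only new ingredient being that the ``linear part'' $\ell:=\lambda_f P_f$ of $f$ is now a group \emph{automorphism} of $\cN$ rather than a linear map. The first thing I would record is precisely this structural fact: both $\lambda_f\in A$ and $P_f\in M$ act on $\cN$ by conjugation, and conjugation by a fixed element is always an automorphism of the group; hence their composite $\ell$ satisfies $\ell(x+_\cN y)=\ell(x)+_\cN\ell(y)$ and $\ell(-\beta)=-\ell(\beta)$, where I write $-z$ for the $+_\cN$-inverse of $z$. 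By definition $f(x)=\ell(x)+_\cN c_f$.

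Next I would use the fixed point to pin down the translation $c_f$. The hypothesis $f(\beta)=\beta$ reads $\ell(\beta)+_\cN c_f=\beta$; left-translating by $-\ell(\beta)$ and using associativity of $+_\cN$ cancels $\ell(\beta)$ and gives $c_f=(-\ell(\beta))+_\cN\beta=\ell(-\beta)+_\cN\beta$, the last equality being the automorphism property. Substituting this value back into $f$ and collecting terms with associativity and the homomorphism property yields
\[
f(x)=\ell(x)+_\cN c_f=\ell(x)+_\cN\ell(-\beta)+_\cN\beta=\ell\bigl(x+_\cN(-\beta)\bigr)+_\cN\beta=\lambda_f P_f(x-\beta)+\beta,
\]
where $x-\beta$ is read as $x+_\cN(-\beta)$ and the $+\beta$ is the right-hand $+_\cN$-translation, exactly as in the statement.

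I do not anticipate a genuine obstacle: once the conventions are fixed, this is a one-line manipulation. The only point demanding care is the non-commutativity of $\cN$. Because $c_f$ must be recovered from the one-sided equation $\ell(\beta)+_\cN c_f=\beta$ rather than from a symmetric one, it is essential that the translation sit on the correct side and that $\ell$ genuinely respect both the group law and inversion; this is why the automorphism property of $\lambda_f P_f$ is the heart of the argument rather than an incidental remark.
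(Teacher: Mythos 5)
Your proof is correct and takes essentially the same route as the paper's: determine $c_f$ from the fixed-point equation and then use that $\lambda_fP_f$, acting by conjugation, is an automorphism of $\cN$ to regroup $\ell(x)+_\cN(-\ell(\beta))$ as $\ell(x-_\cN\beta)$. The paper compresses your ``automorphism'' observation into the phrase ``since $MA$ acts by conjugation on $N$'', so the two arguments coincide step for step.
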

\begin{proof}
By hypothesis $f(x) = \lambda_f P_f(x)+c_f$. Now $f(\beta)=\beta=\lambda_fP_f(\beta)+c_f$. This gives $c_f = -\lambda_fP_f(\beta)+ \beta$. Since $MA$ acts by conjugation on $N$ and $A$ commutes with $M$, we get $\lambda_fP_f(x) - \lambda_fP_f(\beta) = \lambda_fP_f(x-\beta)$. It follows that $f(x) = \lambda_fP_f(x)+c_f = \lambda_fP_f(x-\beta)+\beta$.
\end{proof}

The following proposition implies that theorem \ref{thm-nilpotent} is true as soon as the holonomy group is discrete. It is the main result of \cite{Lee}.

\begin{proposition}\label{prop-limdiscret}
Suppose that a subgroup $\Gamma\subset \Sim(\cN)$ is discrete. Let $f\in \Gamma$ and $a\in \cN$ be such that for any $x\in \cN$, we have $f^nx\to a$. Then for any $g\in \Gamma$, $g(a)=a$.
\end{proposition}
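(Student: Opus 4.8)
The plan is to exhibit $a$ as an \emph{attracting fixed point} of $f$ and to show that the presence of any $g\in\Gamma$ with $g(a)\neq a$ would, after conjugation by high powers of $f$, produce a convergent sequence in $\Gamma$; discreteness then forces this sequence to be eventually constant, and comparing translation parts yields a contradiction. First I would record that $f(a)=a$: since $f^{n+1}x=f(f^{n}x)\to f(a)$ by continuity while $f^{n+1}x\to a$, we get $f(a)=a$. Next, because the translation subgroup $\cN\subset\Sim(\cN)$ acts transitively, I would conjugate the whole configuration by the translation carrying $a$ to the identity $e$ of $\cN$. This operation preserves discreteness of $\Gamma$ and the hypothesis (which becomes $f^{n}x\to e$ for all $x$), and it reduces the assertion $g(a)=a$ to $g(e)=e$. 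So I may assume $a=e$. By Lemma~\ref{lem-centerform} applied to the fixed point $\beta=e$, we have $f(x)=\lambda_fP_f(x)=:L(x)$, where $L$ is the group automorphism of $\cN$ given by conjugation by $\lambda_fP_f\in MA$; the hypothesis now reads $L^{n}(x)\to e$ for every $x$, so $L$ is contracting. Writing $g(x)=\lambda_gP_g(x)+_\cN c_g$, one has $g(e)=c_g=:b$, and the goal becomes $b=e$.

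The core computation is to analyze $h_n:=f^{n}gf^{-n}\in\Gamma$. Since $L=\lambda_fP_f$ is an automorphism, $L^{n}$ distributes over the group law $+_\cN$, and therefore
\[
h_n(x)=\bigl(L^{n}\lambda_gP_gL^{-n}\bigr)(x)+_\cN L^{n}(b).
\]
Because $A$ is central in $MA$ and commutes with $M$, the dilation factors cancel and the automorphism part simplifies to $\lambda_g\,(P_f^{\,n}P_gP_f^{-n})$, whose rotation factor $P_f^{\,n}P_gP_f^{-n}$ lies in the compact group $M$ while its dilation factor remains $\lambda_g$. Meanwhile the translation part is $L^{n}(b)=f^{n}(b)\to e$ by the reduced hypothesis. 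Using compactness of $M$ to extract a convergent subsequence of $P_f^{\,n_k}P_gP_f^{-n_k}$, the transformations $h_{n_k}$ converge in $\Sim(\cN)$ to some $h_\infty\in MA$ that fixes $e$.

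Finally I would invoke discreteness. A convergent sequence inside the discrete group $\Gamma$ is eventually constant: applying an isolating neighborhood of $e\in\Gamma$ to $h_{n_k}^{-1}h_{n_{k+1}}\to\id$ shows $h_{n_k}=h_{n_{k+1}}$ for large $k$. Hence $h_{n_k}=h_\infty$ eventually, so in particular their translation parts coincide, giving $L^{n_k}(b)=c_{h_\infty}=e$ for large $k$. Since $L^{n_k}$ is an automorphism (in particular injective and fixing $e$), this forces $b=e$, i.e.\ $g(e)=e$; undoing the initial conjugation yields $g(a)=a$, as desired.

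The step I expect to be the main obstacle is the second paragraph: verifying that conjugation by the contracting automorphism $L$ splits $h_n$ cleanly into a \emph{bounded} (compact) rotational part, a \emph{constant} dilation part, and a translation part that \emph{decays}, so that a genuine subsequential limit in $\Sim(\cN)$ is produced. Everything else — the reduction to $a=e$, the contraction of $L$, the use of compactness of $M$, and the discreteness-forces-eventual-constancy argument — follows directly from the stated structural hypotheses on $(\Sim(\cN),\cN)$.
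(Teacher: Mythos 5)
Your proof is correct and follows essentially the same strategy as the paper's: conjugate by high powers of $f$ so that the translation part collapses toward the attracting point while the $MA$-part stays in a fixed dilation times a compact set of rotations, then invoke discreteness of $\Gamma$. The only differences are organizational --- the paper conjugates $h=gfg^{-1}$ rather than $g$ itself and argues by contradiction (the sequence $h_n$ converges yet is non-constant because its unique fixed points $f^n(g(a))$ differ), whereas your normalization $a=e$ lets you read the conclusion $g(e)=e$ directly off the translation part of the eventually constant sequence.
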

\begin{proof}
Let $g\in \Gamma$. Suppose that $g(a)\neq a$.
The map $h=g\circ f \circ g^{-1}$ has $g(a)$ as fixed point. We set $h_n = f^n\circ h\circ f^{-n}$ and it has $f^n(g(a))$ as fixed point.
Write $h_n(x) = L(h_n)(x)+c_n$ with $L(h_n)$ the $MA$ part of $h_n$. We have
\begin{align}
L(h_n) &= L(f^n)L(h)L(f^{-n})\\
&= \left(\lambda_f^nP_f^n\right)\left(\lambda_gP_g\lambda_fP_f\lambda_g^{-1}P_g^{-1} \right)\left(\lambda_f^{-n}P_f^{-n} \right)\\
&= \lambda_f P_f^n P_gP_fP_g^{-1}P_f^{-n}\\
h_n(x)=&=\lambda_f P_f^nP_g P_fP_g^{-1}P_f^{-n}(x) + c_n
\end{align}
with $c_n$ a constant converging to some constant when $n\to \infty$. Indeed, $f^n(g(a))\to a$ by hypothesis, $h_n$ fixes $f^n(g(a))$ and
\begin{align}
c_n &=-\lambda_f P_f^nP_g P_fP_g^{-1}P_f^{-n}\left(f^n(g(a))\right)+ h_n\left(f^n(g(a))\right)\\
&\to -\lambda_f P_f^nP_g P_fP_g^{-1}P_f^{-n}(a) + a
\end{align}
By compacity of the subgroup $M\subset \Sim(\cN)$, the map $P_f^nP_g P_fP_g^{-1}P_f^{-n}$ converges towards some $P\in M$. This shows that $c_n$ converges aswell and therefore $h_n$ converges towards some map $h'\in \Sim(\cN)$. But $(h_n)$ is not a constant sequence of maps by hypothesis on $g$. It raises a contradiction since $\Gamma$ is discrete.
\end{proof}

In general, let $\{\delta_t\}_{t\in\mathbf{R}_+}$ be a one-parameter subgroup of linear automorphisms acting on a nilpotent Lie algebra $\mathfrak n$. Furthermore, assume that there is a coordinate basis $x_1,\dots,x_n$ of $\mathfrak n$ such that $\delta_tx_i=t^{d_i}x_i$ for any $i$ and with $d_i\geq 1$. Those transformations are called \emph{dilatations}. A very general theorem of Hebisch and Sikora~\cite{Hebisch} allows to have a pseudo-norm on $\cN$.

\begin{theorem}[\cite{Hebisch}]
Let $\cN$ be a nilpotent group with dilatations $\{\delta_t\}$. There exists a symetric pseudo-norm $\|\cdot\|_\cN$ (also denoted $\|\cdot\|$ when no confusion is possible) on $\cN$ such that the following properties are true.
\begin{enumerate}
\item $\|x+_\cN y\| \leq \|x\| +\|y\|$;
\item $\|\delta_tx\| = t\|x\|$;
\item $\|x\|=0\iff x=0$;
\item $\|-x\|=\|x\|$;
\item $\|\cdot\|$ is continuous on $\cN$ and smooth on $\cN-\{0\}$;
\item the unit ball $\|x\|<1$ is the Euclidean ball $\sum x_i^2 < r^2$ for some $r<\eps$ small enough which has to be chosen first.
\end{enumerate}\label{thm-Hebisch2}
The last property relies on the second theorem of \cite{Hebisch}, and is not true for any pseudo-norm verifying the other five conditions.
\end{theorem}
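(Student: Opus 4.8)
The plan is to realize $\|\cdot\|$ as the Minkowski gauge of a small Euclidean ball with respect to the dilations $\{\delta_t\}$, and to observe that all the assertions except the triangle inequality are then elementary; the triangle inequality, together with the normalization that the unit ball be \emph{exactly} a Euclidean ball, is the substantial point and is the content of the second theorem of \cite{Hebisch}. Concretely, fix $r$ and write $|x|^2=\sum x_i^2$. For $x\neq 0$ the map $s\mapsto |\delta_s x|^2=\sum s^{2d_i}x_i^2$ is smooth and, because every $d_i\geq 1>0$, strictly increasing from $0$ to $+\infty$; so there is a unique $t>0$ with $|\delta_{1/t}x|=r$, and I would set $\|x\|=t$ and $\|0\|=0$.

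From this definition properties \emph{2}, \emph{3}, \emph{4} and \emph{6} are immediate: homogeneity holds because $|\delta_{1/t}(\delta_s x)|=|\delta_{s/t}x|=r$ forces $s/t=1/\|x\|$, i.e. $\|\delta_s x\|=s\|x\|$; positivity and $\|{-x}\|=\|x\|$ follow since $\delta_{1/t}$ is a linear automorphism and $|{-y}|=|y|$; and strict monotonicity gives $\|x\|<1\iff |x|<r$, so the open unit ball is the Euclidean ball of radius $r$. Property \emph{5} I would obtain from the implicit function theorem applied to $F(x,t)=|\delta_{1/t}x|^2-r^2$, whose derivative $\partial_t F=-\sum 2d_i t^{-2d_i-1}x_i^2$ is nonzero for $x\neq 0$; this makes $\|\cdot\|$ smooth on $\cN-\{0\}$, and continuity at $0$ is clear.

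The real work is the triangle inequality \emph{1}, and here the radius must be taken small. Setting $\bar B=\{\,|x|\leq r\,\}$ and using homogeneity, I would first reduce subadditivity to the inclusion
\begin{equation}
\delta_\lambda \bar B +_\cN \delta_{1-\lambda}\bar B \subseteq \bar B\qquad(\lambda\in\iI),
\end{equation}
the equivalence coming from writing, for $x,y\neq 0$, $\lambda=\|x\|/(\|x\|+\|y\|)$ and $\delta_{1/(\|x\|+\|y\|)}(x+_\cN y)=\delta_\lambda u+_\cN\delta_{1-\lambda}v$ with $u,v\in\bar B$. It then suffices to show $|\delta_\lambda u+_\cN\delta_{1-\lambda}v|\leq r$ whenever $|u|,|v|\leq r$. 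Expanding the group law by the Campbell--Hausdorff formula \eqref{eq-campbell} as $x+_\cN y=x+y+Q(x,y)$, each monomial of $Q$ is an iterated bracket, hence involves at least one factor from $x$ and one from $y$, and contributes to the $i$-th coordinate with total dilation weight $d_i$. Rescaling $u=r\hat u$, $v=r\hat v$ with $|\hat u|,|\hat v|\leq 1$, the $i$-th coordinate of $\delta_\lambda u+_\cN\delta_{1-\lambda}v$ takes the form $r\,\ell_i+O(r^2)$ with $\ell_i=\lambda^{d_i}\hat u_i+(1-\lambda)^{d_i}\hat v_i$, the correction being of order at least $r^2$ uniformly over the compact parameter set.

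I expect the main obstacle to be that the Euclidean leading term leaves no room to spare. Using $\lambda^{d_i}\leq\lambda$ and $(1-\lambda)^{d_i}\leq 1-\lambda$ one gets $\sum_i\ell_i^2\leq 1$, but this inequality is \emph{saturated} (take $u=v$ along one coordinate and $\lambda=1/2$), so there is no uniform gap into which to bury the $O(r^2)$ correction, and a naive perturbation argument fails. The way through --- the heart of \cite{Hebisch} --- is structural: exactly at the configurations saturating $\sum_i\ell_i^2=1$ the bracket polynomial $Q$ degenerates (its leading term $\tfrac12[x,y]$ vanishes when the contributing vectors are aligned), so a second-order analysis near the saturating set, controlling the Hessian of $|\cdot|^2$ along the group law uniformly in $\lambda$, shows that for $r$ small enough the correction cannot eject $\delta_\lambda u+_\cN\delta_{1-\lambda}v$ from $\bar B$. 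This quantitative estimate, which forces the a priori smallness of $r$, is what I would import from \cite{Hebisch}; granting it, the inclusion and hence subadditivity follow, and all six properties hold.
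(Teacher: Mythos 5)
The paper does not actually prove this statement: it is imported verbatim from Hebisch and Sikora \cite{Hebisch}, so there is no in-paper argument to compare yours against. With that caveat, your reconstruction is sound where it is self-contained. The gauge definition $\|x\|=t$ with $|\delta_{1/t}x|=r$, the strict monotonicity of $s\mapsto|\delta_s x|$ coming from $d_i\geq 1$, the verifications of properties \emph{2}--\emph{4} and \emph{6}, the implicit-function-theorem argument for smoothness away from $0$, and the reduction of subadditivity to the inclusion $\delta_\lambda \bar B+_\cN\delta_{1-\lambda}\bar B\subseteq\bar B$ for $\lambda\in\iI$ are all correct and are the standard way to set this up. You also correctly locate the substance of the theorem: since $\sum_i\ell_i^2\leq 1$ is saturated, no uniform gap absorbs the $O(r^2)$ correction, and the uniform estimate showing that a sufficiently small Euclidean ball nevertheless works is exactly the second theorem of \cite{Hebisch}, which both you and the paper treat as a black box. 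The one caution is that your closing paragraph (degeneration of the bracket term at aligned configurations) is a heuristic for why that estimate should hold, not a proof of it; since you explicitly flag it as imported, the proposal is consistent with how the paper uses the result.
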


With such a pseudo-norm on $\cN$, we can define a distance function and a good notion of open balls.
First, we define the distance function by
\begin{equation}
d_\cN(x,y) \coloneqq \|-x+_\cN y\|_\cN.
\end{equation}
This distance function is left-invariant by the action of $\cN$ on itself.
We define the \emph{open balls} of $\cN$ to be
\begin{equation}
B(x,R) \coloneqq \{y\in\cN \,\mid\, d(x,y)<R\}.
\end{equation}

Note that for any $x\in \cN$, $x+B(0,R)=B(x,R)$. Also if $\rho\in MA$, then we have $\rho B(x,R)=B(\rho(x),\lambda_\rho R)$. Indeed,
\begin{align}
d(\rho(x),\rho(y)) &= \|-\rho(x)+\rho(y)\| \\
&= \|\rho(-x+y)\|\\
&=\lambda_\rho \|-x+y\| = \lambda_\rho d(x,y).
\end{align}
Note that we used the fact that $M$ preserves the Euclidean metric $\sum x_i^2$: it therefore preserves the unit ball centered in $0$ and hence any ball centered in $0$.

\begin{proposition}\label{prop-ballconvex}
The open balls $B(x,R)$ are geodesically convex (in the sense of definition \ref{def-geostruct}).
\end{proposition}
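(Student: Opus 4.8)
\emph{Approach.} The plan is to reduce to the unit ball about the origin and then exploit the fact that, by property~6 of Theorem~\ref{thm-Hebisch2}, this ball is \emph{round} in the exponential coordinates; geodesic convexity will then follow from a one-variable convexity estimate carried out separately along each geodesic. First I would record the two reductions. Both $d_\cN$ and the geodesics of $(\Sim(\cN),\cN,\exp)$ are left-invariant, so the translation $L_{-x_0}\colon y\mapsto -x_0+_\cN y$ sends $B(x_0,R)$ to $B(0,R)$ and carries geodesic segments to geodesic segments. Moreover each dilation $\delta_t\in A$ is an automorphism of $\cN$, so that $\delta_t\big(x+_\cN\exp(su)\big)=\delta_t x+_\cN\exp(s\,\delta_t u)$ maps geodesics to geodesics, and $\delta_t B(0,R)=B(0,tR)$. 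Applying $\delta_{1/R}$ therefore reduces everything to proving that $B(0,1)$ is geodesically convex. In the coordinates $\ln=\exp^{-1}$, which identify $\cN$ with $\mathfrak n\cong\R^n$ and fix $0$, property~6 states that $B(0,1)$ is the Euclidean ball $\{\sum_i x_i^2<r^2\}$, and I will write $\lvert\cdot\rvert$ for the Euclidean norm $\lvert x\rvert^2=\sum_i x_i^2$.

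For condition~a of Definition~\ref{def-geostruct} I would fix $p,q\in B(0,1)$. Since the structure is injective, the unique geodesic segment from $p$ to $q$ is $\gamma(t)=p+_\cN\exp(tv)$ with $v=\ln(-p+_\cN q)$. Writing $a=\ln p$ and using the Campbell--Hausdorff formula~\eqref{eq-campbell}, the coordinate curve $c(t)=\ln\gamma(t)=H(a,tv)$ is a polynomial in $t$ (finite, as $\mathfrak n$ is nilpotent) of the shape $c(t)=a+tv+(\text{iterated-bracket corrections})$. Setting $\phi(t)=\sum_i c_i(t)^2$, so that $\gamma(t)\in B(0,1)\iff\phi(t)<r^2$, with $\phi(0)<r^2$ and $\phi(1)<r^2$, it then suffices to prove that $\phi$ is convex on $\iI$: for then $\phi(t)\le(1-t)\phi(0)+t\phi(1)<r^2$ and $\gamma$ stays inside.

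The hard part will be this convexity estimate, and it is exactly where the smallness of $r$ is used. Since $p,q\in B(0,1)$ one has $\lvert a\rvert^2,\lvert v\rvert^2=O(r^2)$ (the bound on $v$ by one further use of Campbell--Hausdorff), and every correction term of $c(t)$ carries at least one Lie bracket, hence at least one extra factor of size $O(r)$. Differentiating, I expect uniformly on $\iI$ that $c'(t)=v+O(r)\,O(\lvert v\rvert)$ and $c''(t)=O(r)\,O(\lvert v\rvert^2)$, while $\lvert c(t)\rvert=O(r)$, so that
\[
\phi''(t)=2\sum_i c_i'(t)^2+2\sum_i c_i(t)c_i''(t)=2\lvert v\rvert^2\,(1+O(r)),
\]
with constants depending only on $n$ and the structure constants of $\mathfrak n$. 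Choosing $r$ small enough at the outset --- which property~6 explicitly permits --- then forces $\phi''(t)\ge\lvert v\rvert^2>0$ on $\iI$ whenever $v\neq 0$ (and $v=0$ means $p=q$, with nothing to prove). This uniform strict convexity, which is what dominates the curvature of the geodesics (no longer straight in these coordinates once $\cN$ is non-abelian), is the crux of the whole proposition.

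Finally, for condition~b, I would take geodesic segments $\gamma_n(t)=p+_\cN\exp(tv_n)$ based at $p\in B(0,1)$ with $\gamma_n(1)\to q\in\overline{B(0,1)}$. By continuity of the group law and of $\ln$, the vectors $v_n=\ln(-p+_\cN\gamma_n(1))$ converge to $v=\ln(-p+_\cN q)$, so $\gamma_n\to\gamma$ uniformly, where $\gamma(t)=p+_\cN\exp(tv)$ is a geodesic segment from $p$ to $q$. The estimate of the previous step, now with $\phi(1)\le r^2$, still yields $\phi(t)\le(1-t)\phi(0)+t\phi(1)<r^2$ for $t<1$ (since $\phi(0)<r^2$ strictly), hence $\gamma(t)\in B(0,1)$ for $t<1$. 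This is precisely condition~b of Definition~\ref{def-geostruct}, which together with condition~a establishes the geodesic convexity of $B(0,1)$, and therefore of every ball $B(x,R)$.
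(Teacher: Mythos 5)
Your proof is correct, and for the main containment statement it takes a genuinely different route from the paper. Both proofs make the same two reductions (left-invariance to centre the ball at $0$, then $\delta_{1/R}$ to reduce to the unit ball, which by property~6 of theorem \ref{thm-Hebisch2} is a Euclidean ball of radius $r$ chosen small in advance) and both rest on the same observation that, by Campbell--Hausdorff, geodesics near $0$ are linear to leading order. But where the paper argues by contradiction and a limiting process --- a geodesic arc lying outside the ball between two boundary points would, after shrinking by $\delta_t$, remain outside $B(0,tR)$ while converging to a linear segment joining two points of that ball, which is absurd by linear convexity of the Euclidean ball --- you make the approximation quantitative: every Campbell--Hausdorff correction to $c'(t)$ and $c''(t)$ is mixed in $a$ and $v$ (pure bracket monomials in one variable vanish), hence carries a factor $O(r)$, so $\phi(t)=\lvert c(t)\rvert^2$ satisfies $\phi''=2\lvert v\rvert^2(1+O(r))>0$ once $r$ is small, and containment follows from one-variable convexity with no limiting argument. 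Your version buys two things: explicit control of how small $r$ must be (only in terms of the structure constants and nilpotency degree), and a clean verification that the limit geodesic in condition~b satisfies $\gamma(t)\in C$ for $t<1$, a point the paper's treatment of condition~b passes over (it only checks convergence $\exp(tv_n)\to\exp(tv)$). The paper's blow-down argument is softer and shorter but leaves the quantitative step ("it must at least intersect $B(0,\eps)$") implicit. For condition~b itself the two arguments are essentially identical: continuity of the group law and of $\ln$ give $v_n\to v$ and hence uniform convergence of the segments.
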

\begin{proof}
First, we examine the fact that any two points in an open ball have a geodesic segment that is contained in the open ball.
By left-invariance, it is only required to prove this result for $B(0,R)$.

We first show that this ball is convex in the usual sense. That is to say, we show that if $x,y\in B(0,R)$ then the linear combination $tx+_\mathfrak{n}(1-t)y$ also belongs to $B(0,R)$ for $t\in \iI$ and for $+_\mathfrak{n}$ the linear addition in the coordinates given in $\mathfrak n$ by the inverse of $\exp\colon\mathfrak n \to \cN$.
By the second theorem of \cite{Hebisch}, we can suppose that the unit ball is Euclidean and of radius $r$ as small as desired (but fixed), hence convex for linear segments. Now take $R>0$, and suppose that $x,y\in B(0,R)$. To show that the linear segment $\gamma\colon\iI \to \cN$ from $x$ to $y$ is contained in $B(0,R)$, we show that $\delta_{1/R}\gamma$ is contained in $B(0,1)$. By linearity of the action of $\delta_t$ on $\mathfrak n$,
\begin{equation}
\delta_{1/R}(tx+_{\mathfrak n}(1-t)y) = t\delta_{1/R}x +_{\mathfrak n} (1-t)\delta_{1/R}y
\end{equation}
and since $\delta_{1/R}x,\delta_{1/R}y$ are both in $B(0,1)$ and the last expression is a linear segment with extremities in $B(0,1)$, the segment $\delta_{1/R}\gamma$ is fully contained in $B(0,1)$.

Now we prove that $B(0,R)$ contains its geodesic segments for any $R>0$. We make the following observation: geodesics from $0$ are given by $\exp(tu)$, so the coordinates are $tu$. By the Campbell-Hausdorff formula, coordinates of geodesics now based in $\exp(x)$ are given by the equation
\begin{equation}
\exp(x)\exp(tu) = \exp\left(x+_\mathfrak{n}tu+_\mathfrak{n} \frac12[x,tu] +_\mathfrak{n} \frac 1{12}[x,[x,tu]] -_\mathfrak{n} \frac 1{12}[y,[x,y]] +_\mathfrak{n} \dots \right)
\end{equation}
but when $x$ is close to $0$, and $tu$ is close to $0$ then
\begin{equation}
\exp(hx)\exp(htu) = \exp\left(hx+_\mathfrak{n}htu +_\mathfrak{n} o\left(h^2\right)\right).
\end{equation}
Therefore, essentially, geodesics close to $0$ are linear segments.

If $x,y\in B(0,R)$, then up to applying $\delta_t$ for $t$ small enough, we can assume $tR=\eps$ and $\|\delta_tx\|,\|\delta_ty\| < \eps$. In fact, the entire ball is transformed: $\delta_t(B(0,R))=B(0,tR)$. Let $\gamma\colon\iI\to \cN$ be the geodesic segment from $x$ to $y$. Then $\delta_t\gamma$ is the geodesic segment from $\delta_tx$ to $\delta_t y$. The fact that $\gamma$ is entirely contained in $B(0,R)$ is equivalent to the fact that $\delta_t\gamma$ is entirely contained in $B(0,\eps)$. Since both $\gamma$ and $\partial B(0,R)$ are given by smooth functions, assume that for $t_1,t_2$ we have, for any $s\in \mathop{]}t_1,t_2\mathop{[}$, $\gamma(s)\not\in B(0,R)$ and $\gamma(t_1),\gamma(t_2)\in\partial B(0,R)$. By injectivity of the geodesic structure, $\gamma(s)$ is the only geodesic segment from $\gamma(t_1)$ to $\gamma(t_2)$. Now $\delta_t\gamma(s)$ does never belong to $B(0,\eps)$ for any $\eps>0$. On the other hand, $\delta_t\gamma(s)$ tends to a linear-segment by the preceding observation, hence it must at least intersect $B(0,\eps)$, since it contains the linear-segment from $\delta_t\gamma(t_1)$ to $\delta_t\gamma(t_2)$. Absurd, therefore $\gamma$ is entirely contained in $B(0,R)$. This finishes to prove that any two points in an open ball give a geodesic segment that is contained in this ball.

\paragraph{}
We now prove the second requirement for convexity (see definition \ref{def-geostruct}). To be more specific, let $\gamma_n$ be a sequence of geodesic segments with a fixed base point $p\in B(x,R)$ and with endpoints $q_n$ converging towards $q\in \overline{B(x,R)}$. We need to show that $\gamma_n$ has in fact a subsequence converging to $\gamma$, which is a geodesic segment from $p$ to $q$.

Again by left invariance, it suffices to show this fact for $p=0\in \cN$. With this assumption, each $\gamma_n$ is given by $\gamma_n(t)=\exp(tv_n)$ with $v_n\in\mathfrak n$. By assumption $\exp(v_n)\to q$. Denote $q=\exp(v)$. Since the exponential map is a diffeomorphism, this implies  that $v_n\to v$. Now it is clear that the sequence $\exp(tv_n)$ converges towards $\exp(tv)$, and this is precisely the geodesic from $p$ to $q$.
\end{proof}

\paragraph{Open questions for nilpotent similarity structures in higher ranks}We did suppose that $A$ is a rank one group of dilatations. But for example if we consider the Iwasawa decomposition of a semisimple Lie group, then the dilatation group $A$ is of the same rank as the semisimple group, and in particular has no reason to be of rank one. So now, until the end of this section, we consider $A$ a group of dilatations with a higher rank. We no longer have a nice symmetric pseudo-norm as we had before, and therefore no longer a nice sense of open balls that would remain invariant under the transformation group. (An open ball is still convex, but is not necessarily transformed into another open ball.)
\begin{enumerate}
\item If $M$ is a closed higher rank manifold, when does $M$ have a holonomy group with a dilatation subgroup of rank more that one?
\item Is there a nice general class of geometrical objects that are convex, open, and invariant under the transformation group?
\item When is theorem \ref{thm-nilpotent}  true in higher rank?
\end{enumerate}
The answer to the first question  would generalize the conclusion of theorem \ref{thm-nilpotent} for manifolds with a higher rank nilpotent similarity structure.
For the second question, it should be pointed out that the work of Carrière~\cite{Carriere} about Lorentzian manifolds relies on the fact that the set of the ellipsoids remains stable under the transformation group (and allows Carrière to study the “discompacity” of a holonomy group).

We now discuss the third question.
First, note that proposition \ref{prop-limdiscret} remains true, since the proof did not rely on the fact that $A$ was of rank one. But the existence of $f$ such that $f^n(x)\to a$ depends on a more specific description of $A$. By lemma \ref{lem-stabcomp}, an incomplete structure on a closed manifold implies the existence of $f$ such that the $A$ part of $f$ is not the identity map, say $A_f$. If $A_f^n$ or $A_f^{-n}$ tends to the null transformation, then we can apply proposition \ref{prop-limdiscret}. Say this is the case. In order to have a (very weaker) discrete version of theorem \ref{thm-nilpotent}, we still need to have a version of lemma \ref{lem-stabcomp} for $(MA,\cN)$-closed manifolds. In higher rank, there is no evident reason for this to be true.

\paragraph{A counterexample in rank 2}This example is  part of a paper of Aristide~\cite[p. 3699]{Atistide}. This paper is about radiant manifolds with a similarity group looking like $\SO(n,1)\mathbf R_+$. (We could think $\SO(n,1)$ as a group somehow constituted of a subgroup of $O(n+1)$ and of dilatations.)

Consider $\mathbf R^2$ and the “pseudo-similarity” group $G$ constituted of translations and of a rank $2$ dilatation group, generated by $\delta_t(x,y) = (tx,ty)$ and $\rho_s(x,y)=(sx,s^2y)$. In this dilatation group we consider $\lambda_t$ defined by $\lambda_t=\rho_t\delta_{t}^{-1}$, thus giving $\lambda_t(x,y)=(x,ty)$.

Now consider the subgroup $\Gamma$ generated by $\gamma_1(x,y)=(x+1,y)$ and $\gamma_2(x,y)=(x,2y)$. We have $\Gamma\subset G$.
The quotient $\mathbf R \times\mathbf R^*_+/\Gamma$ is compact, and diffeomorphic to a torus.
Of course, this is not a complete affine structure on the torus. Since $\Gamma$ does not fixe any point of $\mathbf R^2$, the conclusion of theorem \ref{thm-nilpotent} does not hold, providing a counterexample when the rank of $A$ is no longer $1$.

Note that this example is particular in the sense that the quotient is in fact a product of a Euclidean manifold and a radiant manifold.

\paragraph{Open question}What are the affine manifolds which are diffeomorphic to a product $E_1\times\cdots\times E_k\times R_1\cdots R_m$ where the $E_i$ are Euclidean (or nilpotent) manifolds and $R_j$ are radiant manifolds?

Those manifolds (with $m\neq 0$) are incomplete affine manifolds, and still comply to  the Chern conjecture.

\section{Closed nilpotent similarity manifolds}\label{sec-4}

Throughout this section, let $\Sim(\cN)$ denote a similarity group acting on a nilpotent space $\cN$ as defined in the preceding section.
For any $f\in \Sim(\cN)$ we define $\lambda(f)\in A, P(f)\in M, c(f)\in \cN$ such that $f(x) = \lambda(f)P(f)(x) +c(f)$. Also, since this creates no ambiguity, for $g\in \pi_1(M)$ we denote $\lambda(g)$, $P(g)$, $c(g)$ instead of $\lambda(\rho(g))$, $P(\rho(g))$ and $c(\rho(g))$.

\begin{theorem}\label{thm-nilpotent}
Let $M$ be a connected closed $(\Sim(\cN),\cN)$-manifold. If the developing map $D\colon\widetilde M \to \cN$ is not a diffeomorphism, then the holonomy group $\Gamma=\rho(\pi_1(M))$ fixes a point in $\cN$ and $D$ is in fact a covering onto the complement of this point.
\end{theorem}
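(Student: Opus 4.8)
The plan is to follow the roadmap the introduction sketches: use the convexity machinery of Section~\ref{sec-2} to show that each visible set $V_p$ is large (at least a half-space), then deduce that $D$ is a covering onto its image, and finally invoke Proposition~\ref{prop-limdiscret} to locate the fixed point and identify the image as $\cN$ minus a point.

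\begin{proof}[Proof sketch]
Assume $D$ is not a diffeomorphism. Since the structure $(\Sim(\cN),\cN,\exp)$ is injective, Corollary~\ref{cor-convexity} (in its injective form) tells us that there is no $p$ with $V_p=\rT_p\widetilde M$; equivalently, every geodesic ray from every $p$ is \emph{incomplete}, blowing up at some finite time. The plan is to exploit this incompleteness together with the compactness of $M$. First I would fix $p\in \widetilde M$ and a boundary direction $v\in \partial V_p$, so that the geodesic $\exp_p(tv)$ is defined only for $t<1$ and escapes every compact set of $\widetilde M$ as $t\to 1$. Projecting to the closed manifold $M$, compactness forces the projected geodesic to return infinitely often to a fixed small trivialising neighborhood; by Proposition~\ref{prop-trivconv} this neighborhood lifts to disjoint convex opens, and the recurrence produces a deck transformation $g\in\pi_1(M)$ relating successive visits. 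This is the mechanism by which an incomplete geodesic manufactures holonomy.

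The central step is to upgrade this single recurrence into a \emph{dynamical} statement about the linear (i.e.\ $\lambda\cdot P$) part of $g$. Using the explicit form $f(x)=\lambda_f P_f(x)+c_f$ and the geodesically convex open balls $B(x,R)$ constructed in Proposition~\ref{prop-ballconvex}, I would run the argument of Proposition~\ref{prop-convG}: feeding the convex ball $C_p$ into fact~\emph{3} of that proposition shows that whenever $G(w)$ lands in $C_p$ the vector $w$ is itself visible, so that applying $g$ (equivalently, the conjugated map $G=\dd D_p^{-1}\circ(\exp^X)^{-1}\circ\rho(g)\circ\exp^X\circ\dd D_p$) repeatedly enlarges $V_p$. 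Because the balls scale as $\rho B(x,R)=B(\rho(x),\lambda_\rho R)$, iterating $g$ (or $g^{-1}$) drags arbitrarily large balls into $V_p$, and one concludes that $V_p$ contains at least a half-space. The key quantitative input is that the contraction factor $\lambda_g$ must be $\neq 1$ — the dilatation part is genuinely nontrivial — which is exactly what incompleteness on a closed manifold guarantees by Lemma~\ref{lem-stabcomp} (a structure with $\lambda\equiv 1$, i.e.\ an $(MN,\cN)$-structure, is complete). I expect this to be the main obstacle: controlling the limit of the iterated balls and verifying that the half-space property passes uniformly to every point requires careful use of the pseudo-norm estimates and the equivariance in fact~\emph{2} of Proposition~\ref{prop-convG}.

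Once every $V_p$ is at least a half-space, injectivity of $D$ on each $\exp_p(V_p)$ (which holds by the injective-structure lemmas of Section~\ref{sec-2}) lets me patch these images together and show that $D\colon\widetilde M\to\cN$ is a covering onto its image $D(\widetilde M)$. Following the idea attributed to Matsumoto's survey~\cite{Matsumoto}, the covering property forces the holonomy group $\Gamma=\rho(\pi_1(M))$ to be discrete: a nondiscrete $\Gamma$ would produce limits of deck transformations contradicting the covering structure. With $\Gamma$ discrete, I would finally produce an element $f\in\Gamma$ whose iterates satisfy $f^n x\to a$ for all $x$ — this is the contracting dilatation coming from $\lambda_f\neq 1$ — and apply Proposition~\ref{prop-limdiscret} to conclude that every $g\in\Gamma$ fixes $a$. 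Lemma~\ref{lem-centerform} then puts every holonomy element in the normal form $g(x)=\lambda_g P_g(x-a)+a$ fixing $a$, so $\Gamma$ fixes the point $a\in\cN$. Since $D$ is a covering onto its image and $\cN$ is simply connected while $\cN-\{a\}$ is the complement of the fixed point, a connectivity and degree argument identifies $D(\widetilde M)$ with $\cN-\{a\}$ and shows $D$ is a covering onto it, completing the proof.
\end{proof}
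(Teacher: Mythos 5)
Your outline follows the paper's strategy faithfully up to the point where every $V_p$ contains a half-space $H_p$, but the endgame is where the real work lies and your sketch papers over two genuine gaps. First, ``covering onto its image'' does not follow merely from patching together the injectivity of $D$ on the sets $\exp_p(H_p)$: one must analyse the boundary $\partial H_p$, split it into visible and invisible directions $W_p\sqcup I_p$, and prove (Lemma~\ref{lem-Invaff}) that the developed image $I$ of the invisible part is \emph{locally constant in $p$ and affine}. It is only onto $\cN-I$, not onto some unspecified image, that $D$ is shown to be a covering, and the path-lifting argument uses crucially that $r(\widetilde\gamma(T))$ equals the distance from $\gamma(T)$ to $I$ and is therefore bounded below near the endpoint. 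Without identifying $I$ you cannot state what the covering target is, let alone conclude that it is the complement of a single point.

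Second, your claim that the covering property forces $\Gamma$ to be discrete is only correct when the target of the covering is simply connected (in which case $D$ is a diffeomorphism and $\Gamma$ inherits the properly discontinuous action of $\pi_1(M)$ on $\widetilde M$). The paper must separately treat the case where $I$ has codimension $2$, so that $\cN-I$ is \emph{not} simply connected: there one lifts the whole structure to the universal cover $H\times\mathbf R\to\cN-I$ (using that $\Sim(I)_H$ acts transitively on $H$, that $I$ is cut out by equations whose nonzero coefficients share a common dilatation weight, and that the $g_{ij}$ barely rotate around $I$) and only then obtains a discrete lifted holonomy to feed into Proposition~\ref{prop-limdiscret}. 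Relatedly, the conclusion that the avoided set is a \emph{single point} is not a ``connectivity and degree argument'': it is obtained by producing, if $I$ were larger, two contracting elements with distinct attracting points in $I$ and contradicting Proposition~\ref{prop-limdiscret}. These steps are the heart of the proof and cannot be waved through.
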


\emph{The proof of this result begins here and ends at the end of the section.}

Corollary \ref{cor-convexity} and proposition \ref{prop-convgeocomp} show that for every $p\in \widetilde M$, the set $V_p\neq \rT_p\widetilde M$. Hence the image $D(\exp_p(V_p)) = \exp_{D(p)}(\dd D_p(V_p))$ is never equal to $\cN$ by the injectivity of the geodesic structure $(\cN,\exp)$.
Theorefore for each $p\in \widetilde M$, there exists an open subset $B_p\subset V_p\subset {\rm T}_p\widetilde M$ such that the image $D(\exp_p(B_p))$ is the maximal open ball in $D(\exp_p(V_p))$ centered in $D(p)$.
We let
\begin{equation}
 r\colon \widetilde M \to ]0,+\infty[
\end{equation}
be the map that associates the radius of the ball $D(\exp_p(B_p))$ in $\mathcal{N}$ to $p\in \widetilde M$. Note that since the structure is injective and since open balls are geodesically convex by proposition \ref{prop-ballconvex}, the set $\exp_p(B_p)$ is always convex for any $p\in \widetilde M$.

\begin{lemma}\label{lem-rcontract}
For $p\in \widetilde M$ and $q\in \exp_p(B_p)$,
\begin{equation}
r(p)\leq r(q)+d_{\mathcal N}(D(p),D(q)).
\end{equation}
Furthermore, if $g\in\pi_1(M)$ then $r(gp)=\lambda(g)r(p)$ with $\lambda(g)$ being the dilatation factor of the holonomy transformation $\rho(g)\in{\rm Sim}(\mathcal{N})$.
\end{lemma}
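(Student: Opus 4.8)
The plan is to prove the two assertions separately: the first inequality by a containment of developing images followed by the triangle inequality for the pseudo-norm, and the second by equivariance under the holonomy and the scaling behaviour of balls.

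For the first inequality, I would write $\Omega_p = D(\exp_p(V_p))$ and $\Omega_q = D(\exp_q(V_q))$, so that $B(D(p),r(p))$ is the maximal open ball centered at $D(p)$ inside $\Omega_p$, and $r(q)$ is the radius of the maximal open ball centered at $D(q)$ inside $\Omega_q$. The key step is to show that the convex set $C := \exp_p(B_p)$, whose developing image is exactly $B(D(p),r(p))$, is entirely visible from $q$, i.e.\ $C \subseteq \exp_q(V_q)$. Indeed, $C$ is convex in $\widetilde M$ (noted just before the statement) and $q\in C$; hence for every $z\in C$ property \emph{a.} of convexity provides a geodesic segment from $q$ to $z$ fully contained in $C\subseteq \widetilde M$. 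Its initial velocity therefore lies in $V_q$ and $z=\exp_q(\text{that velocity})$, so $z\in \exp_q(V_q)$. Applying $D$ gives $B(D(p),r(p))=D(C)\subseteq \Omega_q$.

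It then remains to compare the maximal balls. Since $q\in \exp_p(B_p)$ we have $D(q)\in B(D(p),r(p))$, and the triangle inequality for the pseudo-norm (property 1 of theorem \ref{thm-Hebisch2}, applied through $-D(p)+_\cN y = (-D(p)+_\cN D(q)) +_\cN (-D(q)+_\cN y)$) shows that $B\bigl(D(q),\,r(p)-d_\cN(D(p),D(q))\bigr)\subseteq B(D(p),r(p))\subseteq \Omega_q$. By maximality of $r(q)$ one obtains $r(q)\geq r(p)-d_\cN(D(p),D(q))$, which is the announced inequality $r(p)\leq r(q)+d_\cN(D(p),D(q))$ (if $r(p)\leq d_\cN(D(p),D(q))$ the inequality is trivial since $r(q)>0$). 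For the equivariance, let $g\in\pi_1(M)$. By proposition \ref{prop-convhol}, parts 1 and 2, the deck transformation $g$ carries the geodesic segments based at $p$ that are defined on the whole of $\iI$ bijectively onto those based at $gp$, whence $\exp_{gp}(V_{gp})=g\,\exp_p(V_p)$; applying $D$ together with the equivariance $D(gx)=\rho(g)D(x)$ yields $\Omega_{gp}=\rho(g)\,\Omega_p$. Writing $\rho(g)(x)=\lambda(g)P(g)(x)+c(g)$ as the composition of $\lambda(g)P(g)\in MA$ with the left-translation by $c(g)$, the computations recorded before proposition \ref{prop-ballconvex} (the $MA$-action sends $B(x,R)$ to $B(\rho(x),\lambda_\rho R)$, and left-translations preserve $d_\cN$ and hence radii) give $\rho(g)\,B(x,R)=B(\rho(g)x,\lambda(g)R)$. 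Since $\rho(g)$ is a bijection from $\Omega_p$ onto $\Omega_{gp}$ sending $D(p)$ to $D(gp)$ and scaling every distance by $\lambda(g)$, it carries the maximal ball $B(D(p),r(p))\subseteq\Omega_p$ onto the maximal ball $B(D(gp),\lambda(g)r(p))\subseteq\Omega_{gp}$, so $r(gp)=\lambda(g)r(p)$.

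The main obstacle is the first step, namely the containment $\exp_p(B_p)\subseteq \exp_q(V_q)$: everything rests on the fact that the ball $\exp_p(B_p)$ is genuinely convex in $\widetilde M$ (which itself uses both the injectivity of the structure and the geodesic convexity of the metric balls of $\cN$ from proposition \ref{prop-ballconvex}), so that a geodesic from the interior point $q$ to any point of the ball stays defined and inside it. Once this is granted, the remaining steps are formal manipulations of the pseudo-norm and of the holonomy equivariance.
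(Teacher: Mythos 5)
Your proof is correct and follows essentially the same route as the paper: both parts rest on the containment $\exp_p(B_p)\subseteq\exp_q(V_q)$ (lemma \ref{lem-convvisible}), the triangle inequality for $d_\cN$, and the identity $D(\exp_{gp}(V_{gp}))=\rho(g)D(\exp_p(V_p))$ combined with the scaling of balls under similarities. The only cosmetic difference is that for the first inequality you bound $r(q)$ from below by exhibiting a ball around $D(q)$ inside $D(\exp_q(V_q))$, whereas the paper bounds $r(p)$ from above via an invisible boundary vector of $B_q$; the two arguments are dual.
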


\begin{proof}
Let $p\in \widetilde M$
and let $q\in \exp_p(B_p)$.
By lemma \ref{lem-convvisible}, $\exp_p(B_p)\subset \exp_q(V_q)$.
Let $v\in \partial B_q$ such that $\exp_q(v)$ is not defined. Then $\exp_{D(q)}(\dd D_q (v))$ does not belong to $D(\exp_q(V_q))$ and hence does not belong to $D(\exp_p(B_p))$ either, which is precisely an open ball of radius $r(p)$. Therefore
\begin{align}
r(p) &\leq d_{\mathcal N}\left(D(p),\exp_{D(q)}\left({\rm d}D_q(v)\right)\right) \\
&\leq d_{\mathcal N}(D(p),D(q))+ d_{\mathcal N}\left(D(q),\exp_{D(q)}\left({\rm d}D_q(v)\right)\right) \\
&\leq d_{\mathcal N}(D(p),D(q)) + r(q).
\end{align}
This proves the inequality.

For the second part, we prove that $\rho(g)$ transforms the ball $D(\exp_p(B_p))$ into the ball $D(\exp_{gp}(B_{gp}))$. If that is true then for any $v\in\partial D(\exp_p(B_p))$, we have $\rho(g)v\in \partial D(\exp_{gp}(B_{gp}))$ and therefore
\begin{align}
r(gp) = d_{\mathcal N}(D(gp),\rho(g)v) &= d_{\mathcal N}(\rho(g)D(p),\rho(g)v) \\
&=\lambda(g)d_{\mathcal N}(D(p),v)=\lambda(g)r(p).
\end{align}

In fact, it suffices to prove that $\rho(g) D(\exp_p(B_p))\subset D(\exp_{gp}(B_{gp}))$ since with $g^{-1}$ we would get the other inclusion.
By proposition \ref{prop-convhol}, $g$ sends $\exp_p(B_p)$ into a convex subset containing $gp$, and by lemma \ref{lem-convvisible} this convex is contained in $\exp_{gp}(V_{gp})$. But $\rho(g)$ preserves open balls, hence $\rho(g)D(\exp_p(B_p))$ is an open ball contained in $D(\exp_{gp}(B_{gp}))$ by maximality of $B_{gp}$.
\end{proof}

The equivariance between $r$ and $\lambda$ allows a sense of length in $\widetilde M$ which will be invariant by the holonomy group. By comparing with proposition \ref{prop-trivconv}, we will give a system of trivializing neighborhoods of $M$ for the covering $\pi\colon\widetilde M\to M$ such that those neighborhoods are comparable to the sets $\exp_p(B_p)$.

In $\widetilde M$ we set the pseudo-distance function
\begin{equation}
d_{\widetilde M}\left(p_1,p_2\right) = \frac{d_\mathcal{N}\left(D\left(p_1\right),D\left(p_2\right)\right)}{r\left(p_1\right)+r\left(p_2\right)}
\end{equation}
which is $\pi_1(M)$-invariant by the preceding lemma.
Let $p\in \widetilde M$ and let $\eps>0$. We will describe open balls of radius $\eps$ in $\widetilde M$ by looking locally at the pseudo-distance function $d_{\widetilde M}$ on couples $(p,x)$ with $x\in \exp_p(B_p)$. On those couples, $D$ is injective. Also, by lemma \ref{lem-rcontract},
\begin{align}
d_{\widetilde M}(p,x) &= \frac{d_{\mathcal N}(D(p),D(x))}{r(p)+r(x)} \\
\implies d_{\widetilde M}(p,x) &\geq \frac{d_{\mathcal N}(D(p),D(x))}{2r(x) + d_{\mathcal N}(D(p),D(x))}
\end{align}
hence if we suppose $d_{\widetilde M}(p,x)<\eps$ with $\eps$ small enough, we get
\begin{align}
\frac{d_{\mathcal N}(D(p),D(x))}{2r(x) + d_{\mathcal N}(D(p),D(x))} &< \eps \\
\iff \frac{d_{\mathcal N}(D(p),D(x))}{r(x)} &< \frac{2\eps}{1-\eps} .\label{eq-pseudounif}
\end{align}
If $r(p)\geq r(x)$ then the same inequality is true for $r(p)$ instead of $r(x)$. If $r(p)< r(x)$, then $p\in \exp_x(B_x)$ by lemma \ref{lem-3geodesic}. By repeating the same argument for $(x,p)$ we get the preceding inequality with $r(p)$ instead of $r(x)$. In either cases
\begin{equation}
\frac{d_{\mathcal N}(D(p),D(x))}{r(p)} < \frac{2\eps}{1-\eps} .
\end{equation}

Conversely by using $r(x)\geq r(p)-d_{\mathcal N}(D(p),D(x))$:
\begin{equation}
d_{\widetilde M}(p,x) \leq \frac{d_{\mathcal N}(D(p),D(x))}{2r(p) - d_{\mathcal N}(D(p),D(x))},
\end{equation}
hence for $\eps>0$
\begin{align}
\frac{d_{\mathcal N}(D(p),D(x))}{2r(p) - d_{\mathcal N}(D(p),D(x))} &< \eps \\
\iff \frac{d_{\mathcal N}(D(p),D(x))}{r(p)} &< \frac{2\eps}{1+\eps}
\end{align}

This shows that for $\eps$ small enough, the ball
\begin{equation}
B_{\widetilde M}(p,\eps) \coloneqq \left\{x\in \exp_p\left(B_p\right) \,\mid\, d_{\widetilde M}(p,x)<\eps\right\}
\end{equation}
has an approximation in its developing image:
\begin{equation}
\left\{ \frac{d_{\mathcal N}(D(p),D(x))}{r(p)} < \frac{2\eps}{1+\eps} \right\} \subset D\left(B_{\widetilde M}(p,\eps)\right) \subset \left\{ \frac{d_{\mathcal N}(D(p),D(x))}{r(p)}<\frac{2\eps}{1-\eps} \right\}
\end{equation}
and is contained in an open of $\widetilde M$. Those open balls hence provide the same basis for the topology of $\widetilde M$.
This means that $d_{\widetilde M}(p,x)<\eps$ is true when in the ball $D(\exp_p(B_p))$ normalized by the radius $r(p)$, the distance between $D(x)$ and $D(p)$ is less than $\simeq 2\eps$.

If $g\in \pi_1(M)$, then by the proof of lemma \ref{lem-rcontract}, $gB_{\widetilde M}(p,\eps)\subset g \exp_p(B_p)$ is a subset of $\exp_{gp}(B_{gp})$ since $g\exp_p(B_p)=\exp_{gp}(B_{gp})$. The pseudo-distance function $d_{\widetilde M}$ being $\pi_1(M)$-invariant, this shows that
\begin{equation}
\forall g\in\pi_1(M), \; gB_{\widetilde M}(p,\eps)= B_{\widetilde M}(gp,\eps).
\end{equation}

In $M$, we can define a system of open neighborhoods by projecting the previously constructed balls of $\widetilde M$:
\begin{equation}
B_M(x,\eps) \coloneqq \pi(B_{\widetilde M}(p,\eps)), p\in \pi^{-1}(x).
\end{equation}
For $\eps$ small enough, the ball $B_M(x,\eps)$ is therefore a trivializing neighborhood of $x$, and this system of open balls gives the same topology for $M$ as the original one.

\paragraph{}
We will now construct holonomy transformations which will be very contracting, with a common center point and with no rotation. The idea is to take $v\in\partial B_p$ such that $\exp_p(v)$ is not defined and to compare with $\exp_{D(p)}({\rm d}D (v))$ in $\mathcal{N}$ where it must be defined. The holonomy transformations will be centered in $\exp_{D(p)}({\rm d}D(v))= \exp_z(w)$. See figure \ref{fig} for the global setting.

\begin{figure}[ht]
\centering
\includegraphics[width = \textwidth]{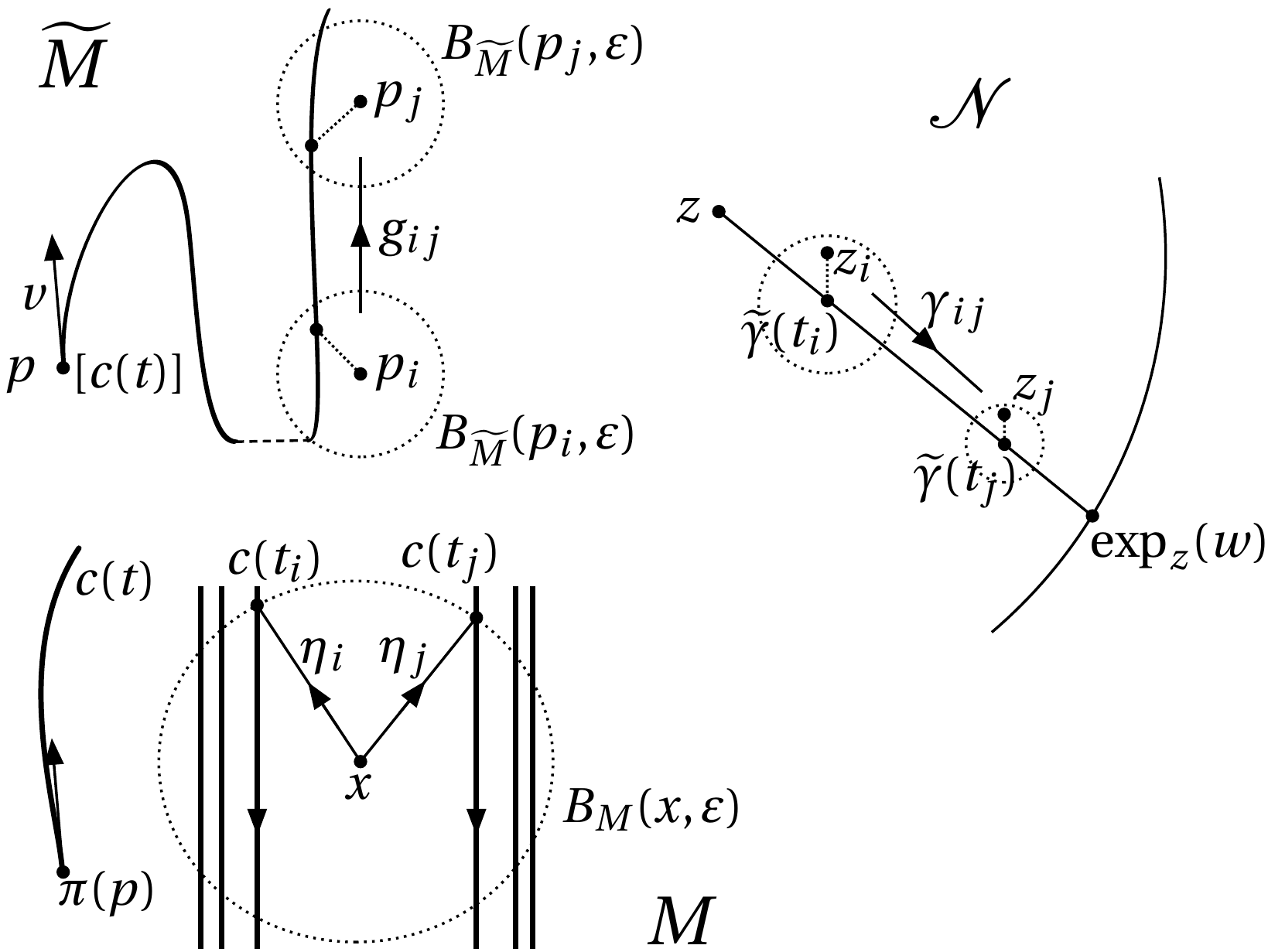}
\caption{The general setting.}\label{fig}
\end{figure}

Consider $p\in \widetilde M$ such that $\exp_p(tv)$ is defined for $0\leq t<1$ but not for $t=1$. The geodesic curve $[c(t)]=\exp_p(tv)$ is an incomplete geodesic. In $M$, the corresponding curve $c(t)=\pi([c(t)])$ is then an infinite long curve in a compact space. There is therefore a recurrent point $x\in M$.

Let $B_M(x,\eps)$ be a ball with radius $\eps>0$ small enough such that $B_M(x,\eps)$ is trivializing $\pi\colon\widetilde M \to M$. Let $0<t_1<\dots<t_n<\dots$ be entry times such that $t_n\to 1$; $c(t_n)\in B_M(x,\eps)$; but $c([t_n,t_{n+1}])\not\subset B_M(x,\eps)$ (it just states that $c$ exits $B_M(x,\eps)$ before time $t_{n+1}$).
Since $\eps$ is small enough, for each $t_n$, up to homotopy we can uniquely set  $\eta_n$ the segment from $x$ to $c(t_n)$ contained in $B_M(x,\eps)$.
By construction we have the following lemma.
\begin{lemma}\label{lem-technique}
For any $i$,
 $[c(t_i)]\in \exp_{p_i}(B_{p_i})$ and $d_{\widetilde M}([c(t_i)],p_i)<\eps$.
\end{lemma}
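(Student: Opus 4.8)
The plan is to prove the lemma directly from its construction, by tracking the specific lift $p_i$ of the recurrent point $x$ through the trivializing structure of the ball $B_M(x,\eps)$. First I would make the definition of $p_i$ explicit: the point $[c(t_i)]=\exp_p(t_i v)\in\widetilde M$ is a prescribed lift of $c(t_i)=\pi([c(t_i)])\in M$, and lifting the segment $\eta_i$, which runs from $x$ to $c(t_i)$ inside $B_M(x,\eps)$, so that its terminal point is exactly $[c(t_i)]$, yields a path $\widetilde\eta_i$ in $\widetilde M$ whose initial point $p_i\in\pi^{-1}(x)$ is by definition the lift of $x$ attached to $\eta_i$.

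Next I would invoke the trivializing property set up just before the lemma. For $\eps$ small enough $B_M(x,\eps)$ is trivializing, so $\pi^{-1}(B_M(x,\eps))$ is a disjoint union of sheets each projecting homeomorphically onto $B_M(x,\eps)$. Using the identity $B_M(x,\eps)=\pi(B_{\widetilde M}(p,\eps))$ together with the equivariance $gB_{\widetilde M}(p,\eps)=B_{\widetilde M}(gp,\eps)$ for $g\in\pi_1(M)$, these sheets are precisely the balls $B_{\widetilde M}(q,\eps)$ with $q\in\pi^{-1}(x)$; in particular the sheet containing $p_i$ is $B_{\widetilde M}(p_i,\eps)$. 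Since $\eta_i$ lies entirely in $B_M(x,\eps)$, its lift $\widetilde\eta_i$ cannot leave the single sheet containing its initial point $p_i$, hence $\widetilde\eta_i\subset B_{\widetilde M}(p_i,\eps)$ and in particular its terminal point $[c(t_i)]$ belongs to $B_{\widetilde M}(p_i,\eps)$.

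Finally I would read off the conclusion from the definition $B_{\widetilde M}(p_i,\eps)=\{y\in\exp_{p_i}(B_{p_i})\mid d_{\widetilde M}(p_i,y)<\eps\}$. The membership $[c(t_i)]\in B_{\widetilde M}(p_i,\eps)$ yields simultaneously $[c(t_i)]\in\exp_{p_i}(B_{p_i})$ and $d_{\widetilde M}(p_i,[c(t_i)])<\eps$, and by symmetry of the pseudo-distance this is $d_{\widetilde M}([c(t_i)],p_i)<\eps$, which is exactly the asserted statement.

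The main obstacle, and the only place where genuine care is needed, is matching the connected sheet of $\pi^{-1}(B_M(x,\eps))$ that contains $p_i$ with the ball $B_{\widetilde M}(p_i,\eps)$, and ensuring that the lift of the connected segment $\eta_i$ cannot jump between distinct sheets. Both points rest on $\eps$ being chosen small enough for $B_M(x,\eps)$ to be trivializing and on the $\pi_1(M)$-equivariance of the family $B_{\widetilde M}(\cdot,\eps)$; once these are in place the remainder of the argument is a direct unwinding of the definitions, which is why the paper can assert the result holds \emph{by construction}.
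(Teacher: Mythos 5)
Your proposal is correct and follows essentially the same route as the paper: both arguments rest on the fact that for $\eps$ small enough $B_M(x,\eps)$ is trivializing with sheets $B_{\widetilde M}(q,\eps)$, $q\in\pi^{-1}(x)$, so that the lift $[c(t_i)]$ lands in the sheet $B_{\widetilde M}(p_i,\eps)$ attached to $p_i$ via $\eta_i$, and the two conclusions then drop out of the definition of $B_{\widetilde M}(p_i,\eps)$. You simply spell out the identification of sheets and the role of the lifted segment $\widetilde\eta_i$ more explicitly than the paper does.
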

\begin{proof}
By hypothesis and according to the preceding discussion, since $B_M(x,\eps)$ is trivializing, if $c(t_i)\in B_M(x,\eps)$ then any lift of $c(t_i)$ is in $B_{\widetilde M}(\widehat p,\eps)$ for $\widehat p\in\pi^{-1}(x)$. In particular $[c(t_i)]$ is at a maximum distance $\eps$ from $p_i$ and lies in $\exp_{p_i}(B_{p_i})$ by definition of $B_{\widetilde M}(p_i,\eps)$.
\end{proof}

We define the concatenated path $g_{ij}$ by
\begin{equation}
 g_{ij} = \eta_j^{-1} \star c(t)|_{\left[t_i,t_{j}\right]} \star \eta_i.
\end{equation}
This is a family of transformations belonging to $\pi_1(M,x)$.
We are now interested in $g_{ij}$ acting on $\widetilde M$.
The path $\widetilde g_{ij}$ lifting $g_{ij}$ sends $p_i$ to $p_j$ by construction. We denote by $\gamma_{ij}$ the holonomy transformation $\rho(g_{ij})$ and we denote by $\widetilde \gamma$ the image $D([c(t)])$. Even if $\gamma(1)$ is not defined, $\widetilde\gamma(1)$ is well defined.

The initially chosen vector $v$  is sent to $w$ by ${\rm d}D$ and $z\coloneqq D(p)$. Each $p_i$ is sent to $z_i$ by $D$. See again figure \ref{fig}.

\begin{proposition}
Suppose that $j\gg i \to \infty$ and we are up to choose a subsequence of $(i,j)$. The transformation $\gamma_{ij}$ verifies the following properties.
\begin{enumerate}
\item $P(\gamma_{ij}) \to E$ (With $E$ the identity transformation.)
\item $\lambda(\gamma_{ij})\to 0$
\item For any $x\in \cN$, $\gamma_{ij}(x)\to \exp_z(w)=\widetilde\gamma(1)$.
\end{enumerate}
\end{proposition}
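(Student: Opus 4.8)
The plan is to treat the three assertions in the order \emph{2}, \emph{3}, \emph{1}, since the dilatation estimate is what drives the convergence of the transformations while the vanishing of the rotations is the delicate point. Throughout I write $a\coloneqq\widetilde\gamma(1)=\exp_z(w)$ for the ideal endpoint of the developed geodesic and $y_i\coloneqq\widetilde\gamma(t_i)=D([c(t_i)])$.

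First I would control the radius $r$ along the incomplete geodesic. I claim $r([c(t)])\leq d_{\cN}(\widetilde\gamma(t),a)$ for every $t<1$: by the reparametrisation condition \emph{1} of definition \ref{def-geostruct}, the geodesic issued from $[c(t)]$ continuing $[c(\cdot)]$ develops onto $\widetilde\gamma$ and fails to be defined exactly at the parameter reaching $a$, so by injectivity of the structure $(\cN,\exp)$ the point $a$ cannot belong to $D(\exp_{[c(t)]}(V_{[c(t)]}))$, hence lies outside the maximal ball of radius $r([c(t)])$ centred at $\widetilde\gamma(t)$. Since $\widetilde\gamma(t)\to a$, this forces $r([c(t)])\to 0$. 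Feeding $[c(t_i)]\in\exp_{p_i}(B_{p_i})$ (lemma \ref{lem-technique}) into lemma \ref{lem-rcontract} and combining with the estimate \eqref{eq-pseudounif} coming from $d_{\widetilde M}(p_i,[c(t_i)])<\eps$ yields $r(p_i)\leq\tfrac{1-\eps}{1-3\eps}\,r([c(t_i)])$ and $d_{\cN}(z_i,y_i)\to 0$ for $\eps<1/3$; in particular $r(p_i)\to 0$ and $z_i\to a$ (and likewise $z_j\to a$).

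For \emph{2}, I would use the equivariance $r(gp)=\lambda(g)r(p)$ of lemma \ref{lem-rcontract}: since the lift of $g_{ij}$ carries $p_i$ to $p_j$ this reads $\lambda(\gamma_{ij})=r(p_j)/r(p_i)$. As $r(p_n)\to 0$, I can refine the sequence of return times so that, choosing $j\gg i$, the quotient $r(p_j)/r(p_i)$ is as small as desired; this provides the announced subsequence of pairs and gives $\lambda(\gamma_{ij})\to 0$. Statement \emph{3} then costs nothing: writing $\gamma_{ij}(x)=\lambda(\gamma_{ij})P(\gamma_{ij})(x)+_{\cN}c(\gamma_{ij})$ and using $\gamma_{ij}(z_i)=z_j$, the compactness of $M\subset\Sim(\cN)$ bounds $P(\gamma_{ij})$, so $\lambda(\gamma_{ij})P(\gamma_{ij})(z_i)\to 0$; since $z_j\to a$ this forces $c(\gamma_{ij})\to a$, and therefore $\gamma_{ij}(x)\to a=\exp_z(w)$ for every $x\in\cN$.

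The genuine difficulty is \emph{1}. Here I would introduce the frame field $\mathcal E$ on $\widetilde M$ obtained by pulling back through $D$ the left-invariant frame $(L_\zeta)_*(e_1,\dots,e_n)$ of $\cN$. Equivariance of $D$ shows that $\mathcal E(q)$ and $\mathcal E(gq)$ differ, after transport by the deck transformation $g$, by the linear map $\lambda(g)P(g)$; normalising away the dilatation, the resulting conformal frame defines a map $\Psi$ from $\widetilde M$ into the conformal frame bundle $\mathcal C\to M$, which is \emph{compact} (its base being the closed manifold and its fibre the compact group $M\subset\Sim(\cN)$) and which satisfies $\Psi(gq)=P(g)^{-1}\Psi(q)$; in particular $P(\gamma_{ij})^{-1}=\Psi(p_j)\Psi(p_i)^{-1}$. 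The crucial geometric input is that, as $\widetilde\gamma(t)\to a$, the developed frames at $z_i$ and at $y_i$ both converge to the left-invariant frame at $a$, so that $\Psi(p_i)$ and $\Psi([c(t_i)])$ become arbitrarily close in $\mathcal C$. Using recurrence of $c$ together with compactness of $\mathcal C$, I would refine the subsequence once more so that $\Psi([c(t_i)])$ converges to a single frame $\xi$; then both $\Psi(p_i)$ and $\Psi(p_j)$ tend to $\xi$, whence $P(\gamma_{ij})\to E$. The main obstacle is precisely this alignment step: one must upgrade the recurrence of the base point $x$ to recurrence of the full conformal frame (via compactness of $\mathcal C$) and ensure that the comparison between the frame at $p_i$ and the frame at $[c(t_i)]$ is uniform as $i\to\infty$, which is exactly where the estimates of the first paragraph guaranteeing $z_i,y_i\to a$ are used. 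Finally I would observe that the three successive refinements are compatible, producing the single subsequence of pairs $(i,j)$ required by the statement.
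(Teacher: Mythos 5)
Your argument is correct, and for assertions \emph{2} and \emph{3} it follows the paper's route almost exactly: the identity $r(p_j)=\lambda(g_{ij})r(p_i)$ from lemma \ref{lem-rcontract}, the comparison of $r(p_i)$ with $r([c(t_i)])$ via \eqref{eq-pseudounif}, and the contraction estimate $d_\cN(\gamma_{ij}(z_i),\gamma_{ij}(x))=\lambda(\gamma_{ij})d_\cN(z_i,x)$ are precisely the ingredients used there. Your opening observation that $a=\widetilde\gamma(1)$ cannot lie in $D(\exp_{[c(t)]}(V_{[c(t)]}))$, whence $r([c(t)])\leq d_\cN(\widetilde\gamma(t),a)\to 0$, is a welcome addition: the paper asserts that the numerator $r([c(t_j)])$ tends to $0$ without spelling this out, and your justification (uniqueness of the geodesic from $\widetilde\gamma(t)$ to $a$ plus uniqueness of lifts) is the right one. (Minor slip: the upper bound you quote should read $r(p_i)\leq\frac{1+\eps}{1-\eps}r([c(t_i)])$; the constant $\frac{1-\eps}{1-3\eps}$ belongs to the reverse inequality of lemma \ref{lem-approxgeo}, but any fixed constant serves your purpose.) Where you genuinely diverge is assertion \emph{1}, which you call the delicate point and attack with a conformal frame bundle; the paper dispatches it in one line from the cocycle relation $\gamma_{jk}\gamma_{ij}=\gamma_{ik}$, hence $P(\gamma_{ij})=P(\gamma_{1j})P(\gamma_{1i})^{-1}$, together with compactness of the rotation group $M$: extract a subsequence along which $P(\gamma_{1i})$ converges and the differences tend to $E$. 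Your construction encodes the same idea (your $\Psi(p_i)$ is essentially $P(\gamma_{1i})^{-1}$ acting on a reference frame), but the ``alignment step'' you single out as the main obstacle --- comparing the frames at $p_i$ and at $[c(t_i)]$ as $z_i,y_i\to a$ --- is not needed at all: the points $p_i$ all lie over the single fibre above $x$, so $\Psi(p_i)$ ranges in one compact fibre isomorphic to $M$ and a convergent subsequence exists for free. I would therefore replace the third paragraph by the cocycle argument; as written it is repairable but introduces a comparison between frames over distinct base points that demands more care than the statement deserves.
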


\begin{proof}
Since $M$ is compact and since $\gamma_{jk}\gamma_{ij}=\gamma_{ik}$, the transformation $P(\gamma_{ij})$ accumulates to the identity $E$. This proves the first point up to choose a subsequence of $(i,j)$.

Now we prove that $\lambda\to 0$.
By lemmas \ref{lem-technique} and \ref{lem-rcontract}
\begin{equation}
r(p_j) \leq r([c(t_j)]) + d_{\mathcal N}(z_j,\widetilde\gamma(t_j))
\end{equation}
and by lemma \ref{lem-technique} and the definition of $d_{\widetilde M}$
\begin{equation}\label{eq-approx}
d_\mathcal{N}(z_j,\widetilde\gamma(t_j)) < \eps \left( r(p_j) + r([c(t_j)]) \right).
\end{equation}
This shows that for $j\to \infty$, $d_\cN(z_j,\widetilde\gamma(t_j))$ is arbitrarily small.
This also gives
\begin{equation}
r(p_j) \leq (1+\eps) r([c(t_j)]) + \eps r(p_j).
\end{equation}
Note $r(p_j) = r(g_{ij}(p_i)) = \lambda(g_{ij}) r(p_i)$, it follows that
\begin{align}
\lambda(g_{ij}) r(p_i) &\leq (1+\eps) r([c(t_j)]) + \eps\lambda(g_{ij}) r(p_i)\label{eq-53}\\
\iff \lambda(g_{ij}) &\leq \frac{1+\eps}{1-\eps} \frac{r([c(t_j)])}{r(p_i)}.\label{eq-lambda}
\end{align}
Since the numerator tends to $0$, for $i$ fixed we get that $\lambda\to 0$ for $j$ tending to $+\infty$. So this is true for $i,j$ large enough such that $j\gg i$. This proves the second point.

The preceding calculation showed that $d_\cN(z_j,\widetilde\gamma(t_j))$ is arbitrarily small, and when $t_j\to 1$ we have $\widetilde\gamma(t_j)\to \exp_z(w)=\widetilde\gamma(1)$. Therefore $z_j\to \exp_z(w)$.
By construction, $\gamma_{ij}$ sends $D(p_i)=z_i$ to $D(p_j)=z_j$. Now:
\begin{align}
d_\cN(\gamma_{ij}(z_i),\gamma_{ij}(x)) &= \lambda(\gamma_{ij})d_\cN(z_i,x)
\end{align}
and since $\lambda(\gamma_{ij})\to 0$, we have $\gamma_{ij}(x)\to \gamma_{ij}(z_i)=z_j\to \exp_z(w)$. This proves the third and last point.
\end{proof}

\begin{lemma}\label{lem-approxgeo}
We have the following inequalities
\begin{align}
d_\cN(\rho(g_{ij})\widetilde\gamma(t_i),\widetilde\gamma(t_j)) &\leq r([c(t_j)]) \frac{4\eps-4\eps^2}{1-5\eps+3\eps^2},\\
\lambda(g_{ij})&\leq \frac{1+\eps}{1-3\eps}\frac{r([c(t_j)])}{r([c(t_i)])}.
\end{align}
\end{lemma}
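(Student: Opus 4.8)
The plan is to derive both inequalities from three facts: the equivariance $\gamma_{ij}(z_i)=z_j$ (which follows from the lift $\widetilde g_{ij}$ carrying $p_i$ to $p_j$ together with $D\circ g=\rho(g)\circ D$), the scaling law $d_\cN(\gamma_{ij}(x),\gamma_{ij}(y))=\lambda(g_{ij})\,d_\cN(x,y)$ valid for any similarity, and the $\eps$-estimates furnished by Lemma~\ref{lem-technique}. Concretely, since $d_{\widetilde M}([c(t_i)],p_i)<\eps$, the inequality \eqref{eq-pseudounif} established before the lemma gives $d_\cN(z_i,\widetilde\gamma(t_i))<\frac{2\eps}{1-\eps}\,r([c(t_i)])$, and the same estimate holds at the index $j$. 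In addition, Lemma~\ref{lem-rcontract} applied to $p_i$ and $[c(t_i)]\in\exp_{p_i}(B_{p_i})$, combined with $d_\cN(z_i,\widetilde\gamma(t_i))<\eps\bigl(r(p_i)+r([c(t_i)])\bigr)$, yields the comparison $r(p_i)\ge\frac{1-\eps}{1+\eps}\,r([c(t_i)])$ between the two radii.

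I would prove the bound on $\lambda(g_{ij})$ first, since it feeds into the distance estimate. Starting from \eqref{eq-lambda}, namely $\lambda(g_{ij})\le\frac{1+\eps}{1-\eps}\,\frac{r([c(t_j)])}{r(p_i)}$, the only work is to replace $r(p_i)$ by $r([c(t_i)])$ in the denominator. Since $\frac{1-\eps}{1+\eps}\ge\frac{1-3\eps}{1-\eps}$ — the difference of the cross-products being $4\eps^2\ge0$ — the comparison above gives $r(p_i)\ge\frac{1-3\eps}{1-\eps}\,r([c(t_i)])$, and substituting this into \eqref{eq-lambda} produces exactly the factor $\frac{1+\eps}{1-3\eps}$ of the second inequality.

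For the first inequality I would insert $z_j=\gamma_{ij}(z_i)$, apply the triangle inequality, and use the scaling law:
\begin{equation*}
d_\cN\bigl(\gamma_{ij}\widetilde\gamma(t_i),\widetilde\gamma(t_j)\bigr)\le\lambda(g_{ij})\,d_\cN\bigl(\widetilde\gamma(t_i),z_i\bigr)+d_\cN\bigl(z_j,\widetilde\gamma(t_j)\bigr).
\end{equation*}
The second summand is at most $\frac{2\eps}{1-\eps}\,r([c(t_j)])$ by the estimate above. In the first summand I bound $d_\cN(\widetilde\gamma(t_i),z_i)<\frac{2\eps}{1-\eps}\,r([c(t_i)])$ and $\lambda(g_{ij})\le\frac{1+\eps}{1-3\eps}\,\frac{r([c(t_j)])}{r([c(t_i)])}$ from the previous step, so that $r([c(t_i)])$ cancels and this summand is controlled by $\frac{2\eps(1+\eps)}{(1-\eps)(1-3\eps)}\,r([c(t_j)])$. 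Placing the two contributions over the common denominator $(1-\eps)(1-3\eps)$ and adding the numerators gives the bound $\frac{4\eps-4\eps^2}{(1-\eps)(1-3\eps)}\,r([c(t_j)])$, which yields the claimed inequality.

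The only genuinely delicate point, and the place where I expect to spend the most care, is the two-sided comparison between $r(p_i)$ and $r([c(t_i)])$ (and likewise at $j$): the radius function $r$ is only $1$-Lipschitz for $d_\cN$ along $\exp_p(B_p)$ by Lemma~\ref{lem-rcontract}, so one has to feed in precisely the right one among several equivalent $\eps$-estimates, and keep careful track of which point serves as the base point of the pseudo-distance $d_{\widetilde M}$, in order to land on the stated constants. Once this bookkeeping is fixed, both inequalities reduce to the triangle inequality and the similarity scaling law.
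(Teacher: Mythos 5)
Your argument is essentially the paper's proof: the same triangle inequality through $\rho(g_{ij})D(p_i)=D(p_j)$, the same scaling law, the same substitution of the radius comparison into \eqref{eq-lambda}, and the same final arithmetic (your denominator $(1-\eps)(1-3\eps)=1-4\eps+3\eps^2$ is in fact slightly sharper than the stated $1-5\eps+3\eps^2$, so it implies the claim). One correction to the bookkeeping you rightly flag as delicate: applying Lemma \ref{lem-rcontract} to the pair ``$p_i$ and $[c(t_i)]\in\exp_{p_i}(B_{p_i})$'' gives $r(p_i)\leq r([c(t_i)])+d_\cN(z_i,\widetilde\gamma(t_i))$, i.e.\ an \emph{upper} bound on $r(p_i)$, whereas the lower bound $r(p_i)\geq\frac{1-\eps}{1+\eps}\,r([c(t_i)])$ that you feed into \eqref{eq-lambda} requires the lemma with $[c(t_i)]$ as base point, and hence the preliminary observation that $d_\cN(z_i,\widetilde\gamma(t_i))<\frac{2\eps}{1-\eps}r([c(t_i)])<r([c(t_i)])$ for $\eps$ small, so that $p_i\in\exp_{[c(t_i)]}(B_{[c(t_i)]})$ by Lemma \ref{lem-3geodesic} --- exactly the step the paper inserts at this point.
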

\begin{proof}
We estimate first $r([c(t_j)])$ in terms of $r([c(t_i)])$ and $\lambda(g_{ij})$. By the preceding calculations, we know that
\begin{equation}
\lambda(g_{ij})r([c(t_i)]) \leq \frac{1+\eps}{1-\eps}\frac{r([c(t_j)])}{r(p_i)}r([c(t_i)]).
\end{equation}
But $[c(t_i)]\in \exp_{p_i}(B_{p_i})$ and $d_{\widetilde{M}}([c(t_i)],p_i)<\eps$. Therefore by equation \eqref{eq-pseudounif}:
\begin{equation}
d_{\cN}(D(p_i),\widetilde\gamma(t_i)) < \frac{2\eps}{1-\eps} r([c(t_i)]).
\end{equation}
Since we can choose $\eps$ small enough such that $2\eps/(1-\eps)<1$, we see that in fact 
\begin{equation}
d_\cN(D(p_i),\widetilde\gamma(t_i))<r([c(t_i)]).
\end{equation}
We therefore have $p_i\in \exp_{[c(t_i)]}(B_{[c(t_i)]})$ (see lemma \ref{lem-3geodesic}). It follows by lemma \ref{lem-rcontract} and the previous inequality that
\begin{align}
r([c(t_i)]) &\leq r(p_i) + d_\cN(\widetilde\gamma(t_i),D(p_i))\\
&\leq r(p_i) + \frac{2\eps}{1-\eps} r([c(t_i)]) \\
\implies \frac{1-3\eps}{1-\eps}r([c(t_i)]) &\leq r(p_i).
\end{align}
Therefore
\begin{align}
\lambda(g_{ij})r([c(t_i)]) & \leq \frac{1+\eps}{1-\eps}\frac{r([c(t_j)])}{r(p_i)}r([c(t_i)])\\
&\leq \frac{1+\eps}{1-\eps} r([c(t_j)]) \frac{1-\eps}{1-3\eps}\\
&\leq \frac{1+\eps}{1-3\eps}r([c(t_j)]).
\end{align}
This proves the second inequality.

Now, we can compute the distance between $\rho(g_{ij})\widetilde\gamma(t_i)$ and $\widetilde\gamma(t_j)$. Recall equation \eqref{eq-53}.
\begin{align}
d_\cN(\rho(g_{ij})\widetilde\gamma(t_i),\widetilde\gamma(t_j)) &\leq d_\cN(\rho(g_{ij})\widetilde\gamma(t_i),\rho(g_{ij})D(p_i)) + d_\cN(\rho(g_{ij})D(p_i),\widetilde\gamma(t_j))\\
&\leq \lambda(g_{ij})d_\cN(\widetilde\gamma(t_i),D(p_i)) + d_\cN(D(p_j),\widetilde\gamma(t_j))\\
&\leq \lambda(g_{ij})\left(\eps (r(p_i)+r([c(t_i)]))\right) + \eps(r(p_j)+r([c(t_j)]))\\
&\leq \eps\left( \lambda(g_{ij})r([c(t_i)]) \left( \frac{1+\eps}{1-\eps} + 1 \right) + r([c(t_j)])\left( \frac{1+\eps}{1-\eps} + 1 \right)\right)\\
&\leq \eps r([c(t_j)])\left( \frac{1+\eps}{1-3\eps}\frac{2}{1-\eps} + \frac{2}{1-\eps} \right) \\
&\leq \eps r([c(t_j)])\frac{2+2\eps + 2 -6\eps}{1-5\eps +3\eps^2}\\
&\leq \eps r([c(t_j)]) \frac{4-4\eps}{1-5\eps+3\eps^2}
\end{align}
It gives the required inequality.
\end{proof}

Note that those inequalities allow to compute a bit more explicitly how $\rho(g_{ij})$ moves $\widetilde\gamma(1)= \exp_{z}(w)$. We already know that for $j\gg i$ both large enough, it sends $\widetilde\gamma(1)$ close to itself.
\begin{align}
d_\cN(\rho(g_{ij})\widetilde\gamma(1),\widetilde\gamma(1)) &\leq d_\cN(\rho(g_{ij})\widetilde\gamma(1),\rho(g_{ij})\widetilde\gamma(t_i))+d_\cN(\rho(g_{ij})\widetilde\gamma(t_i),\widetilde\gamma(1))\\
&\leq \lambda(g_{ij})r([c(t_i)]) + d_\cN(\widetilde\gamma(t_j),\rho(g_{ij})\widetilde\gamma(t_i))+d_\cN(\widetilde\gamma(t_j),\widetilde\gamma(1))\\
&\leq r([c(t_j)]) \left( \frac{1+\eps}{1-3\eps} + \frac{4\eps-4\eps^2}{1-5\eps+3\eps^2}+ 1\right)
\end{align}

We now recall the construction made for proposition \ref{prop-convG}.
\emph{Let $p\in \widetilde M$. If for $g\in\pi_1(M)$, $g\cdot p$ is visible from $p$ by a vector in $B_p$, then if a vector $u$ is such that $G(u)\in B_p$, then in fact $\exp_p(u)$ is visible from $p$, with $G$ given by}
\begin{equation}
G = \dd D_{p}^{-1}\circ \exp_{D(p)}^{-1}\circ \rho(g)\circ \exp_{D(p)}\circ \dd D_p.
\end{equation}
This allows to extend the set of vectors that can be seen from $p$ further than $B_p$.
By passing through the developing map, if ${\rm d}D_pG(u)\in {\rm d}D_p(B_p)$ then it is true that $G(u)\in B_p$. Therefore if $\exp_{D(p)}(\dd D_p G(u))\in D(\exp_p(B_p))$ then $u$ is visible.

\begin{lemma}
For $j\gg i$ large enough, the point $g_{ij}p$ belongs to $\exp_p(B_p)$.
\end{lemma}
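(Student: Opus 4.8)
The plan is to place the translate $g_{ij}p$ inside the convex set $C=\exp_p(B_p)$ by means of Lemma \ref{lem-3geodesic}. Since $p\in C$ and $D(C)=B(z,r(p))$, it suffices to check two things: that $g_{ij}p$ is visible from $p$, and that its developing image $D(g_{ij}p)=\rho(g_{ij})(z)=\gamma_{ij}(z)$ lies in $B(z,r(p))$. The visibility I would obtain by transport: for a fixed $i$ and $\eps$ small the point $p_i$ is $\eps$-close to the interior point $[c(t_i)]=\exp_p(t_iv)$ (which lies strictly inside $\exp_p(B_p)$ because $t_i<1$ and $v\in\partial B_p$), so a first application of Lemma \ref{lem-3geodesic} puts $p_i\in\exp_p(B_p)$; transporting the geodesic $p\to p_i$ by $g_{ij}$ (Proposition \ref{prop-convhol}) and using the reparametrisation axiom then exhibits the relevant geodesic in $\widetilde M$, whose developing image is the geodesic from $\gamma_{ij}(z)$ to $z_j=\gamma_{ij}(z_i)$. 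By geodesic convexity of the balls (Proposition \ref{prop-ballconvex}) this developed geodesic is trapped in $B(z,r(p))$ as soon as its two endpoints are, and injectivity of $D$ on $\exp_p(B_p)$ lets one identify the lift with $g_{ij}p$. Everything therefore reduces to the interiority of the two developed points $\gamma_{ij}(z)$ and $z_j$.

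The endpoint $z_j$ is, by \eqref{eq-approx}, within $\eps(r(p_j)+r([c(t_j)]))$ of the interior point $\widetilde\gamma(t_j)\in B(z,r(p))$, so I would control it with the margin $r(p)-d_\cN(z,\widetilde\gamma(t_j))$ coming from Lemma \ref{lem-rcontract}, combined with the bounds of Lemma \ref{lem-approxgeo} and smallness of $\eps$. The genuinely delicate endpoint is $\gamma_{ij}(z)$: we have shown $\gamma_{ij}(z)\to\exp_z(w)=\widetilde\gamma(1)$, and this limit lies \emph{on} the boundary $\partial B(z,r(p))$, so a crude triangle bound only yields $d_\cN(z,\gamma_{ij}(z))\to r(p)$ and cannot by itself decide strict interiority.

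To extract the strict inequality I would use the similarity structure directly. Writing $\gamma_{ij}$ in the centred form of Lemma \ref{lem-centerform} about its near-fixed point $a_{ij}$ (with $a_{ij}\to\exp_z(w)$, $P(\gamma_{ij})\to E$ and $\lambda(\gamma_{ij})\to0$), the displacement $-z+_\cN\gamma_{ij}(z)$ reduces, up to reordering, to $-b+_\cN\delta_\lambda P(b)$ with $b=-a_{ij}+_\cN z$ and $\|b\|\to r(p)$. The homogeneity $\|\delta_\lambda\cdot\|=\lambda\|\cdot\|$ of the Hebisch--Sikora norm (Theorem \ref{thm-Hebisch2}), together with the vanishing of the rotation $P(\gamma_{ij})\to E$, should pull the pseudo-norm of this vector strictly below $\|b\|$, which is exactly what pins $\gamma_{ij}(z)$ strictly inside the open ball. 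Making this uniform is then a matter of the quantitative estimates already available: since $\lambda(\gamma_{ij})\le\frac{1+\eps}{1-3\eps}\,r([c(t_j)])/r([c(t_i)])$ and $\gamma_{ij}$ moves $\widetilde\gamma(1)$ by at most a fixed multiple of $r([c(t_j)])$ (Lemma \ref{lem-approxgeo}), fixing $i$ and letting $j\to\infty$ drives both the homothety ratio and the displacement to $0$, so that for $j\gg i$ both $\gamma_{ij}(z)$ and $z_j$ fall in the interior of $B(z,r(p))$, and Lemma \ref{lem-3geodesic} delivers $g_{ij}p\in\exp_p(B_p)$.

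The main obstacle I anticipate is precisely this strict interiority $\gamma_{ij}(z)\in B(z,r(p))$. Because $\gamma_{ij}(z)$ converges to the boundary point $\exp_z(w)$, the argument has no slack at the level of the triangle inequality; one must genuinely use the homogeneous structure of the pseudo-norm and the vanishing rotation to produce a strict decrease, and then verify that the two perturbations, $a_{ij}\neq\exp_z(w)$ and $P(\gamma_{ij})\neq E$, are dominated by the margin $r(p)-d_\cN(z,\widetilde\gamma(t_j))$ supplied by Lemma \ref{lem-rcontract}. This is where the fine second property of \cite{Hebisch} (the Euclidean unit ball) and the careful choice $j\gg i$ with $\eps$ small must be combined; the remaining identification of the lift with $g_{ij}p$ itself, rather than with some other point of the same developing image, is then routine via the equivariance of $D$ and uniqueness of geodesic lifts on the convex set $\exp_p(B_p)$.
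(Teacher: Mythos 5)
Your reduction is the paper's: you place $p_i$ (hence $p_j$) in $\exp_p(B_p)$, transport the geodesic from $p$ to $p_i$ by $g_{ij}$ using proposition \ref{prop-convhol}, and reduce everything via lemma \ref{lem-3geodesic} to the single claim $D(g_{ij}p)=\gamma_{ij}(z)\in B(z,r(p))$. The gap is exactly at the point you flag yourself: that claim is never proved, and the mechanism you propose for it does not work as stated. Writing $-z+_\cN\gamma_{ij}(z)=-b+_\cN\delta_\lambda P(b)$ with $b=-a_{ij}+_\cN z$, homogeneity only gives $\|\delta_\lambda P(b)\|=\lambda\|b\|$, and the triangle inequality then yields $\|-b+_\cN\delta_\lambda P(b)\|\leq(1+\lambda)\|b\|$, which is the wrong direction. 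A bound strictly below $\|b\|$ would require the dilation ray $\lambda\mapsto\delta_\lambda b$ to lie metrically between $0$ and $b$, which does not follow from the axioms of theorem \ref{thm-Hebisch2} when the weights $d_i$ are not all equal (in exponential coordinates the ray is $(\lambda^{d_1}b_1,\dots,\lambda^{d_n}b_n)$, not an affine segment, and it is not a geodesic of the structure). Moreover, even granting such a decrease you would still need $\|b\|=d_\cN(a_{ij},z)\leq r(p)$; the fixed point $a_{ij}$ is only known to converge to the boundary point $\exp_z(w)$ and may sit outside $\overline{B(z,r(p))}$, so ``strictly below $\|b\|$'' does not give ``strictly below $r(p)$''.

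The paper closes this step by changing the reference ball, which is where the missing slack comes from. Instead of estimating $d_\cN(z,\gamma_{ij}(z))$ against the radius of the big ball $B(z,r(p))$ --- where the relevant points sit at relative distance $1-o(1)$ from the center and there is nothing to spare --- it works inside the maximal ball $B(\widetilde\gamma(t_j),r([c(t_j)]))$ rescaled by $r([c(t_j)])^{-1}$ to the Euclidean unit ball of theorem \ref{thm-Hebisch2}. There, lemma \ref{lem-approxgeo} places the transformed basepoint $\rho(g_{ij})\widetilde\gamma(t_i)$ at relative distance $O(\eps)$ from the center, and the diagonal form of the dilatations ($w_i\mapsto\lambda^{d_i}w_i$, rotation close to $E$) shows that the transformed geodesic points into the same spherical quadrant as the tail $\widetilde\gamma(\{t\geq t_j\})$; the whole transformed segment, hence in particular $\rho(g_{ij})\widetilde\gamma(0)=D(g_{ij}p)$, therefore stays in $D(\exp_p(B_p))$. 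If you want to keep your centred-form computation you must supply a substitute for this localisation; as written, the decisive strict inequality is asserted (``should pull the pseudo-norm strictly below'') rather than proved.
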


\begin{proof}Recall lemma \ref{lem-3geodesic}.
To prove that $g_{ij}p$ is visible from $p$, we will show that: $p_j$ belongs to $\exp_p(B_p)$; $g_{ij}p$ is visible from $p_j$; and $D(g_{ij}p)\in D(\exp_p(B_p))$. Those three facts and lemma \ref{lem-3geodesic} show that $g_{ij}p$ belongs to $\exp_p(B_p)$.

To prove the first point, we use  lemma \ref{lem-3geodesic} again. It will be true for in fact any integer $i$. Of course, $[c(t_i)]$ belongs to $\exp_p(B_p)$. By construction (see lemma \ref{lem-technique}), $p_i$ is visible from $[c(t_i)]$.
By lemma \ref{lem-technique}, by definition of the pseudo distance function $d_{\widetilde{M}}$ and by equation \eqref{eq-pseudounif}:
\begin{equation}
d_{\cN}(D(p_i),D([c(t_i)])) < \frac{2\eps}{1-\eps} r([c(t_i)]).
\end{equation}
Also note that $d_\cN(D(p),D([c(t_i)])=r(p)-r([c(t_i)])$, it follows that:
\begin{align}
d_\cN(D(p),D(p_i)) &\leq d_\cN(D(p),D([c(t_i)]) + d_\cN(D([c(t_i)]),D(p_i))\\
&< r(p)-r([c(t_i)]) + \frac{2\eps}{1-\eps}r([c(t_i)]) \\
&< r(p) - \frac{1-3\eps}{1-\eps} r([c(t_i)])<r(p)
\end{align}
And the last step comes from the initial choice of $\eps$ small enough.
This shows that $D(p_i)\in D(\exp_p(B_p))$ and therefore $p_i\in \exp_p(B_p)$.

Now, take a geodesic segment from $p$ to $p_i$. This geodesic segment is transformed by $g_{ij}$ onto a geodesic segment from $g_{ij}p$ to $g_{ij}p_i=p_j\in \exp_p(B_p)$. We still have to show that $D(g_{ij}p)\in D(\exp_p(B_p))$.

By lemma \ref{lem-approxgeo}
\begin{align}
d_\cN(\rho(g_{ij})\widetilde\gamma(t_i),\widetilde\gamma(t_j)) &\leq r([c(t_j)]) \frac{4\eps-4\eps^2}{1-5\eps+3\eps^2}.
\end{align}
Now, take $B$ the ball centered in $\widetilde\gamma(t_j)$ and of radius $r([c(t_j)])$. Apply a dilatation of factor $r([c(t_j)])^{-1}$. It gives the unit ball $B(\widetilde\gamma(t_j),1)$. Translate this ball so it is centered in $0\in\cN$. By construction of the open balls (see theorem \ref{thm-Hebisch2}), this ball is given in coordinates by $\sum x_i^2 \leq 1$, where $(x_i)$ is the adapted basis of $\mathfrak n$ such that the dilatations act as diagonal transformations.
By a uniform choice of $\eps$, the corresponding point of $\rho(g_{ij})\widetilde\gamma(t_i)$ is really close to $0$.

Now we compare the geodesic segment corresponding to the geodesic segment $\widetilde\gamma(\{t\geq t_j\})$ with the one corresponding to $\rho(g_{ij})\widetilde\gamma(\{t\geq t_i\})$.
Both start very closely to $0$. Their tangent vector at $0$ is given  by the corresponding vector of $(1-t_j)w$, where $w=\dd D_p v_p$ has been transformed by parallel transport. The other one is given by $\rho(g_{ij})_*(1-t_i)w$. The linear transformation $\rho(g_{ij})_*$ has an orthogonal part close the the identity since we can suppose $j\gg i$ large enough. Therefore on each coordinate $w_i$ of $(w_1,\dots,w_n)=w$, we have $\rho(g_{ij})\simeq \lambda(g_{ij})w_i = \lambda(g_{ij})^{d_i}w_i$. It follows that if $w_i\geq 0$ then $\rho(g_{ij})_*w_i\geq 0$.

Therefore the geodesic segment $\rho(g_{ij})\widetilde\gamma(t_i)$ starts close to $\widetilde\gamma(t\geq t_i)$ and has its directional vector in the same spherical quadrant than $\widetilde\gamma(t\geq t_j)$. Therefore, the geodesic segment $\widetilde\gamma(t)$ is transformed by $\rho(g_{ij})$ onto a geodesic segment that still is in $D(\exp_p(B_p))$. In particular $\rho(g_{ij})\widetilde\gamma(0)=D(g_{ij}p)$ is in $D(\exp_p(B_p))$.
\end{proof}

The first part of the next proposition is inferred from the last discussion.
\begin{proposition}\label{fried-demiespace}
The exponential based in $p$ is well defined on a subspace of ${\rm T}_p\widetilde M$ given by
\begin{equation}
H_p = \bigcup_{j \gg i \gg 0} {\rm d} D^{-1}(\gamma_{ij}^{-1}({\rm d} D (B_p))).
\end{equation}
The subspace $H_p\subset \rT_p\widetilde M$ is a half-space. More precisely, the boundary $\partial D(\exp_p(H_p))$ is the left-translation of $\exp_0(V)$ with $V\subset \mathfrak n$ a codimension $1$ linear vector space.
\end{proposition}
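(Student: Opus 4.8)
The plan is to move everything into the developing image, where $D(\exp_p(H_p))$ becomes an explicit union of open balls of $\cN$, and then to identify that union as a geodesic half-space. For the first assertion (that $\exp_p$ is defined on $H_p$) I would simply apply proposition \ref{prop-convG} with $g=g_{ij}$ and $C_p=B_p$: the preceding lemma gives $g_{ij}p\in\exp_p(B_p)$ for $j\gg i$, so part \emph{3} applies and every $w$ with $G_{ij}(w)\in B_p$ lies in $V_p$, i.e. $G_{ij}^{-1}(B_p)\subset V_p$ and hence $H_p\subset V_p$. Using the intertwining $\exp_{D(p)}(\dd D_p G_{ij}(w))=\rho(g_{ij})\exp_{D(p)}(\dd D_p w)$ built into the definition of $G_{ij}$, the condition $G_{ij}(w)\in B_p$ reads $\rho(g_{ij})D(\exp_p w)\in B(z,r(p))$, whence
\[
D(\exp_p(H_p))=\bigcup_{j\gg i}\gamma_{ij}^{-1}\big(B(z,r(p))\big),\qquad z=D(p),
\]
where $B(z,r(p))=D(\exp_p(B_p))$ is the maximal ball centred at $z$.

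Next I would describe each $\gamma_{ij}^{-1}(B(z,r(p)))$. Since $\lambda(\gamma_{ij})\to 0$, for $j\gg i$ the dilation factor is $<1$, so $\gamma_{ij}$ is contracting and has a unique fixed point $\beta_{ij}$; because $\gamma_{ij}(x)\to\widetilde\gamma(1)=\exp_z(w)=:a$ uniformly on compacta, $\beta_{ij}\to a$. Writing $\gamma_{ij}$ about $\beta_{ij}$ via lemma \ref{lem-centerform} and using that $M$ is orthogonal while $A$ dilates the pseudo-norm, $\gamma_{ij}^{-1}(B(z,r(p)))$ is a ball $B(c_{ij},R_{ij})$ with $R_{ij}=\lambda(\gamma_{ij})^{-1}r(p)\to\infty$ and centre on the dilation orbit of $-\beta_{ij}+_\cN z$ issued from $\beta_{ij}$. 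The crucial input is that $v$ was chosen in $\partial B_p$, so $a\in\partial B(z,r(p))$ and $\|u_0\|=r(p)$ with $u_0:=-a+_\cN z$; consequently $d_\cN(a,c_{ij})=R_{ij}$ up to the vanishing error $\beta_{ij}-a$, so that every ball $B(c_{ij},R_{ij})$ passes asymptotically through the single point $a$ while its centre recedes to infinity and its radius diverges.

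I would then left-translate by $-a$ (bringing the common boundary point to $0$) and conjugate the $\gamma_{ij}^{-1}$ accordingly, so that the family becomes $B\big(\delta_{t_{ij}}P(\gamma_{ij})^{-1}u_0,\;t_{ij}r(p)\big)$ with $t_{ij}=\lambda(\gamma_{ij})^{-1}\to\infty$, $P(\gamma_{ij})^{-1}u_0\to u_0$, each passing through $0$. Its union is a dilation cone (invariant under every $\delta_\tau$). Using the Hebisch--Sikora pseudo-norm of theorem \ref{thm-Hebisch2} --- smooth off $0$, $\delta$-homogeneous, with Euclidean unit sphere --- I would compute the limit of these expanding balls: a point $y$ lies in the union iff the dilation orbit $s\mapsto\delta_s y$ enters $B(u_0,r(p))$, and near $0$ this is governed by the lowest-weight part of the orbit's initial velocity against the inward normal $\nu$ to the sphere at $0$. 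Writing $\Pi_1$ for the projection onto the lowest-weight layer, this singles out the $\delta$-invariant codimension-one linear subspace $V=\{X:\langle\Pi_1 X,\Pi_1\nu\rangle=0\}$ (equivalently $V=\lim_{t\to\infty}\delta_t(\rT_0\partial B(u_0,r(p)))$) and shows the union equals the geodesic half-space $\exp_0(\{X:\langle\Pi_1 X,\Pi_1\nu\rangle>0\})$, with boundary $\exp_0(V)$; translating back by $a$ yields the asserted left-translate.

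The main obstacle is this last identification: rigorously proving that expanding pseudo-balls sharing one boundary point sweep out exactly a geodesic half-space whose boundary is $\exp_0(V)$ with $V$ \emph{linear} of codimension one. Two features of the non-Euclidean setting make it delicate. First, the balls $B(c,R)$ with $c\neq 0$ are only geodesically, not linearly, convex, so one cannot immediately exclude a dilation orbit re-entering $B(u_0,r(p))$ and enlarging the union beyond the half-space. Second, the tangent planes to the spheres at $0$ genuinely rotate under $\delta_t$, so the boundary hyperplane only appears in the $t\to\infty$ limit, where the higher-weight directions are crushed. I expect to control both using the blow-up principle already exploited in proposition \ref{prop-ballconvex} --- that $\exp(hX)\exp(hY)=\exp(h(X+_{\mathfrak n}Y)+o(h^2))$, so the geometry near $a$ is asymptotically Euclidean and linear --- together with the quantitative estimates of lemma \ref{lem-approxgeo}, which bound how far $\rho(g_{ij})$ displaces $a=\widetilde\gamma(1)$ and thereby keep the errors $\beta_{ij}-a$ and $P(\gamma_{ij})-E$ negligible in the limit.
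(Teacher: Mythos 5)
Your proposal follows essentially the same route as the paper's proof: after invoking proposition \ref{prop-convG} with the contracting holonomies $g_{ij}$ to get $H_p\subset V_p$, both arguments replace $\gamma_{ij}^{-1}$ by inverse dilatations centred at the blow-up point $\widetilde\gamma(1)$ and identify the union of the expanded balls as the half-space bounded by the dilatation-limit $V=\lim_{\lambda\to 0}f_\lambda^{-1}(T)$ of the tangent hyperplane $T$ to $D(\exp_p(B_p))$ at that point. Your version is in fact more quantitative than the paper's (which simply asserts that the orbits $f_\lambda(x)$ of points of the half-space eventually enter $C=B(0,\eps)\cap D(\exp_p(B_p))$), and the delicate step you flag --- that expanding pseudo-balls sharing a boundary point sweep out exactly a half-space with \emph{linear} codimension-one boundary --- is passed over just as quickly in the paper itself.
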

\begin{proof}
Up to apply a left translation, suppose that $\widetilde\gamma(1)=0$. The ball $D(\exp_p(B_p))$ has a tangent hyperplane $T$ at $0$. Let $f_\lambda (x) = \lambda x$ be a dilatation of factor $\lambda>0$ small. Then $f_\lambda^{-1}(T)$ converges to a hyperplane $V$ when $\lambda \to 0$. This hyperplane needs not to be tangent to $D(\exp_p(B_p))$.

Now, consider a ball $B(0,\eps)$ with $\eps>0$ very small. The intersection $C=B(0,\eps)\cap D(\exp_p(B_p))$ ressembles to the intersection of $B(0,\eps)$ with the half space with boundary $T$. Let $H$ be the half-space with boundary $V$. Then any point $x\in H$ has its orbit $f_\lambda(x)$ converging to $C$. Therefore $x$ corresponds to a vector that is visible from $D(p)$.
\end{proof}

Note that $H$ is rarely the full developing image of the visible set from $p$. Indeed, $V$ might intersects $D(\exp_p(B_p))$. This indicates that the boundary of $H$ is sometimes visible.

\begin{corollary}
For each $p\in \widetilde M$, $H_p$ is convex.
\end{corollary}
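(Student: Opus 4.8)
The plan is to unwind the definition of convexity for subsets of $\widetilde M$. By Proposition~\ref{fried-demiespace} the exponential $\exp_p$ is defined on all of $H_p$, so $H_p\subset V_p$, and it suffices to check two things: that $D$ is injective on $\exp_p(H_p)$, and that $H:=D(\exp_p(H_p))$ is geodesically convex in $\cN$ in the sense of Definition~\ref{def-geostruct}. The first is immediate: since $H_p\subset V_p$ and $(\cN,\exp)$ is an injective structure, the developing map is already injective on $\exp_p(V_p)$, hence a fortiori on $\exp_p(H_p)$. So the entire content is the convexity of the half-space $H$ in $\cN$. Because left-translations lie in $\Sim(\cN)$ and preserve both geodesics and convexity, I would first translate so that the boundary hyperplane $\partial H$ passes through $0$; by Proposition~\ref{fried-demiespace} this boundary is then $\exp_0(V)$ for a codimension-one linear subspace $V\subset\mathfrak n$.

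Next I would pin down the algebraic nature of $V$. Writing $V=\lim_{\lambda\to0}\delta_\lambda^{-1}(T)$ for the tangent hyperplane $T$ of the ball at $0$ (as in the proof of Proposition~\ref{fried-demiespace}), the identity $\delta_\mu(\delta_\lambda^{-1}T)=\delta_{\lambda/\mu}^{-1}T$ shows that $V$ is invariant under every dilation $\delta_\mu$. Hence $V=\ker\ell$ for a linear form $\ell$ supported in a single dilation-degree, namely the minimal degree occurring in the normal to $T$, and $H=\{x\in\cN\mid \ell(\ln x)\ge 0\}$. Setting $L=\ell\circ\ln\colon\cN\to\R$, the key step is to prove that $L$ is affine along geodesics: for $\gamma(t)=x+_\cN\exp(t\xi)$ one has $\ln\gamma(t)=H(\ln x,t\xi)$ by the Campbell–Hausdorff formula~\eqref{eq-campbell}, and since every bracket term strictly raises the dilation-degree, the component of $\ln\gamma(t)$ in the minimal degree seen by $\ell$ depends on $t$ only through the powers $t^0$ and $t^1$; thus $L(\gamma(t))=(1-t)L(x)+tL(y)$. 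Condition~(a) of Definition~\ref{def-geostruct} then follows at once, and condition~(b) is obtained exactly as in the second half of the proof of Proposition~\ref{prop-ballconvex}, using that $\exp\colon\mathfrak n\to\cN$ is a diffeomorphism, so that geodesics issued from a fixed point vary continuously with their initial vector.

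The hard part is this affineness of $L$, i.e.\ controlling the Campbell–Hausdorff corrections in the minimal degree. In the two-step case the series~\eqref{eq-campbell} truncates to $\ln x+t\xi+\tfrac12[\ln x,t\xi]$, which is manifestly affine in $t$, so the argument is immediate; this already covers the abelian and Heisenberg-type boundary geometries of the rank-one symmetric spaces. In higher step one must argue by degree bookkeeping: a term of the series containing $k\ge 2$ factors of $\xi$ is an iterated bracket of length $>k$, whose dilation-degree is correspondingly large, and the point to verify is that such a term cannot land in the minimal degree selected by $\ell$ once $V$ is $\delta$-invariant. I would isolate this as the technical lemma underlying the statement. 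As a fallback avoiding the explicit computation, one can instead read $\exp_p(H_p)$ as the union of the sets $g_{ij}^{-1}\!\left(\exp_p(B_p)\right)$ furnished by Proposition~\ref{prop-convG}, each of which is convex (the image of the convex ball $\exp_p(B_p)$ under an element of $\pi_1(M)$, by Proposition~\ref{prop-convhol}) and contains $p$, and then pass convexity to the limit of this family.
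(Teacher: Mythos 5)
Your reduction is sound as far as it goes: $H_p\subset V_p$ by Proposition \ref{fried-demiespace}, injectivity of $D$ on $\exp_p(H_p)\subset\exp_p(V_p)$ is automatic for an injective structure, and the observation that $V=\lim_{\lambda\to 0}\delta_\lambda^{-1}(T)$ is invariant under every $\delta_\mu$, hence of the form $\ker\ell$ with $\ell$ supported in a single dilation-degree $d$, is correct. But the step you yourself flag as ``the technical lemma underlying the statement'' is false, so the main route collapses. The obstruction is precisely the Campbell--Hausdorff term $-\tfrac1{12}[b,[a,b]]$: with $b=t\xi$ it contributes $-\tfrac{t^2}{12}\,\ell([\xi,[a,\xi]])$ to $L(\gamma(t))$, and $[\xi,[a,\xi]]$ lives in degree $\geq 3$, which is not excluded from the support of $\ell$ --- $d$ is the minimal degree occurring in the normal to $T$, and nothing forces $d\leq 2$. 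Concretely, in the free step-$3$ Carnot group on two generators (degrees $1,1,2,3,3$, with $e_5=[e_2,[e_1,e_2]]$), the hyperplane $V=\ker x_5^*$ is $\delta$-invariant, yet for $a=-e_1+\eps e_5$ and $\xi=e_2+ce_5$ one finds $L(\gamma(t))=\eps+ct+\tfrac{t^2}{12}$, which for suitable $c<0$ and $\eps>0$ small is nonnegative at $t=0,1$ and negative in between. So $L$ is neither affine nor quasi-concave along geodesics, and a half-space bounded by an arbitrary $\delta$-invariant hyperplane need not satisfy condition (a) of Definition \ref{def-geostruct}. Your computation is valid only in step $\leq 2$ (which you acknowledge), and more generally only when $d\leq 2$; it cannot establish the corollary in the stated generality.

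The paper gives no proof of this corollary, and the intended justification is visibly what you relegate to a fallback: Proposition \ref{fried-demiespace} exhibits $H_p$ as the union over $j\gg i$ of the sets $G_{ij}^{-1}(B_p)$ of Proposition \ref{prop-convG}, each of which is convex because its developing image $\rho(g_{ij})^{-1}\bigl(D(\exp_p(B_p))\bigr)$ is an open ball (Proposition \ref{prop-ballconvex}) and $D$ is injective on $\exp_p(V_p)$. That should be your primary argument, not a fallback, and it still requires two checks you only gesture at: that the family of balls is directed (any two members, all containing $D(p)$ and with radii $\lambda(g_{ij})^{-1}r(p)\to\infty$, are eventually contained in a common member, which is what lets condition (a) pass to the union), and that condition (b) survives the passage to the union (the limit geodesic must have its interior in $H_p$, not merely in its closure). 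The content of the corollary lies in these two verifications, not in a Campbell--Hausdorff estimate.
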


We now divide $\partial H_p$ onto $W_p\sqcup I_p$, where $W_p$ denotes the visible vectors and $I_p$ the invisible vectors of $\partial H_p$.

\begin{lemma}\label{lem-Invaff}
The image $\exp_{D(p)}({\rm d}D_p(I_p))$ is locally constant (hence constant) following $p$, this image is denoted $I$. Furthermore, $I$ is affine.
\end{lemma}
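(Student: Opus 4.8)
The plan is to identify the developed invisible set with an intrinsic piece of the image of the developing map, and then to read off its shape from the contracting holonomy. Set $\Omega \coloneqq D(\widetilde M)$, write $\mathcal H_p \coloneqq D(\exp_p(H_p))$ for the developed half-space and $\Pi_p \coloneqq \partial\mathcal H_p$ for its boundary. Since $\exp\colon\mathfrak n\to\cN$ is a diffeomorphism and $\dd D_p$ is a linear isomorphism, the map $\exp_{D(p)}\circ\dd D_p\colon\rT_p\widetilde M\to\cN$ is a global diffeomorphism, so $\Pi_p=\exp_{D(p)}(\dd D_p(\partial H_p))$ is a genuine geodesic hyperplane, a left-translate of some $\exp_0(V_p)$ as in proposition~\ref{fried-demiespace}. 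Let $a\coloneqq\widetilde\gamma(1)$ be the endpoint of the incomplete geodesic used to build $H_p$; by construction $a\in\Pi_p$.

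First I would prove the characterization $\exp_{D(p)}(\dd D_p(I_p)) = \Pi_p\cap(\cN\setminus\Omega)$. For $v\in\partial H_p$ the open segment $\{tv : 0<t<1\}$ lies in the open half-space $H_p\subset V_p$, so $\exp_p(tv)$ is defined for $t<1$ and develops to the unique geodesic ending at $x_v\coloneqq\exp_{D(p)}(\dd D_p v)\in\Pi_p$. If $x_v\in\Omega$, say $x_v=D(y)$, then taking a convex neighbourhood $C\ni y$ and lifting the tail of the developed geodesic through $D|_C$ --- exactly the argument showing that $\exp_p(V_p)$ is closed when $V_p$ is convex --- forces $\exp_p(tv)\to y$, hence $v\in W_p$; conversely $v\in W_p$ gives $x_v=D(\exp_p(v))\in\Omega$. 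Thus for $v\in\partial H_p$ one has $v\in I_p$ if and only if $x_v\notin\Omega$, which is the asserted identity.

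Local constancy now amounts to showing that $\Pi_p\cap(\cN\setminus\Omega)$ does not depend on $p$, and I would prove the sharper statement $\cN\setminus\Omega=\bigcap_q\Pi_q$. One inclusion is immediate: if $z\in\Omega$ then $z=D(w)\in\mathcal H_w$, the open half-space from $w$, so $z\notin\Pi_w$; hence $\bigcap_q\Pi_q\subseteq\cN\setminus\Omega$, and since $\bigcap_q\Pi_q\subseteq\Pi_p$ this is contained in $\Pi_p\cap(\cN\setminus\Omega)=\exp_{D(p)}(\dd D_p(I_p))$. The reverse inclusion --- equivalently $\cN\setminus\Omega\subseteq\Pi_p$, i.e. the far open side $\mathcal O_p$ of each half-space is entirely contained in $\Omega$ --- is the main obstacle. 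Here the transformations $\gamma_{ij}$ are decisive: $\Omega$ is $\Gamma$-invariant, so $\gamma_{ij}^{-1}(\Omega)=\Omega$, and $\gamma_{ij}^{-1}$ expands from $a$ with factor $\lambda(\gamma_{ij})^{-1}\to\infty$ and rotation part tending to $E$. Continuing the developed geodesic slightly past $a$ produces a point $\widetilde\gamma(1+\eps)\in\mathcal O_p$; I would first show that this point lies in $\Omega$, and then apply the expanding maps $\gamma_{ij}^{-1}$ and take the union over $j\gg i$ to sweep out all of $\mathcal O_p$ inside $\Omega$, exactly as the half-space $H_p$ itself was produced. Matching the expanding direction with the actual shape of $\Omega$ near $a$ is the delicate point.

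Granting $I\coloneqq\cN\setminus\Omega=\bigcap_q\Pi_q$, this set is manifestly independent of $p$, so the map is locally (hence, by connectedness of $\widetilde M$, globally) constant; being the complement of the $\Gamma$-invariant set $\Omega$, it is $\Gamma$-invariant, so in particular $\gamma_{ij}(I)=I$. To see that $I$ is affine I would use that the $\gamma_{ij}$ are, asymptotically, pure dilations centred at $a$: in the graded coordinates $\delta_t x_i = t^{d_i}x_i$ of theorem~\ref{thm-Hebisch2}, invariance of the closed set $I\ni a$ under these contractions, whose rotation parts tend to $E$ and whose factors $\lambda(\gamma_{ij})$ run through a sequence tending to $0$, together with the fact that $I=\bigcap_q\overline{\mathcal O_q}$ is an intersection of geodesic half-spaces through $a$, forces $I$ to be the left-translate by $a$ of $\exp(W)$ for a $\delta_t$-graded linear subspace $W\subset\mathfrak n$. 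Thus $I$ is affine, which is the remaining assertion.
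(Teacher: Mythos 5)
Your proposal rests on two claims that are not established and at least one of which is false as stated. First, in your characterization $\exp_{D(p)}(\dd D_p(I_p)) = \Pi_p\cap(\cN\setminus\Omega)$, the implication ``$x_v\in\Omega\Rightarrow v\in W_p$'' does not follow from the lifting argument you cite. Writing $x_v=D(y)$ and lifting the tail of the developed geodesic through $D|_C^{-1}$ for a convex $C\ni y$ produces \emph{a} lift converging to $y$, but two lifts of the same path coincide only if they agree at one point, and nothing forces the geodesic $\exp_p(tv)$ to enter $C$ at all: the lift can leave every compact set of $\widetilde M$ while its developed image converges to $x_v$. (In the paper's proposition on closedness of $\exp_p(V_p)$ this identification worked because the approximating points $q_n$ were known to lie in $C\cap\exp_p(V_p)$ and $D$ was injective there; you have no such input, and $D$ is not yet known to be a covering onto $\Omega$ --- that is proved \emph{after} this lemma.) Second, your ``sharper statement'' $\cN\setminus\Omega=\bigcap_q\Pi_q$ fails in the case $W_p=\emptyset$: there $\Omega=D(\exp_p(H_p))$ is an open half-space, every $\Pi_q$ equals $\Pi_p$, and $\cN\setminus\Omega$ is a closed half-space, strictly larger than $\Pi_p$. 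Even in the main case, the inclusion $\mathcal O_p\subset\Omega$ that you flag as ``the main obstacle'' is precisely the hard content, and your proposed entry point --- that $\widetilde\gamma(1+\eps)$ lies in $\Omega$ --- is unjustified: a priori the invisible set could contain a whole segment of the geodesic beyond $a$.

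The paper takes a different and more local route that avoids identifying $I$ with $\cN\setminus\Omega$. For $v_p\in I_p$ and any $u\in W_p$ with $q=\exp_p(u)$, it shows directly that $\exp_{D(p)}(\dd D_p(v_p))$ must lie on the hyperplane $\exp_{D(q)}(\dd D_q(\partial H_q))$: if not, one picks $p'\in\exp_p(H_p)$ with this point on $\partial D(\exp_{p'}(B_{p'}))$, obtains contracting holonomies $\rho(g_{ij})$ centred there with almost no rotation, and uses the fact that $D(g_{ij}(\exp_q(H_q)))$ then swallows $\exp_{D(q)}(\dd D_q(\overline{B_q}))$ to complete an incomplete geodesic from $q$ --- a contradiction. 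Since each new visible direction $u$ cuts the image of $I_p$ down by a hyperplane intersection that strictly decreases dimension, the process stabilizes after finitely many steps, yielding simultaneously the local constancy and the affineness (in the paper's sense of a finite intersection of the sets $\exp_{D(q)}(\dd D_q(\partial H_q))$). Your final paragraph on gradedness of $I$ under the dilatations is in the right spirit but belongs to the lemma that follows in the paper, not to this one, and in any case presupposes the identification of $I$ that your earlier steps do not deliver.
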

By “affine” we will mean the following property: $I$ is equal to an intersection of a finite number of $\exp_{D(p)}(\dd D_p(\partial H_p))$.

\begin{proof}
If $W_p=\emptyset$, then $I_p=\partial H_p$ is affine and its image by the developing map must be constant since $\exp_p(V_p) = \widetilde M$ is convex (because $H_p$ is convex and equal to $V_p$ if $\partial H_p$ is only constituted of invisible vectors) and $D$ is therefore a diffeomorphism from $\exp_p(V_p)$ to its image.

Let $v_p\in I_p$. Suppose that $u\in W_p$. Let $q = \exp_p(u)$.
The point $q$ has a half-space $H_q$. We will show that $\exp_{D(p)}({\rm d}D_p(v_p))\in \exp_{D(q)}({\rm d}D_q(\partial H_q))$. It shows 
\begin{equation}
\exp_{D(p)}({\rm d}D_p(I_q))\subset \exp_{D(p)}({\rm d}D_p(\partial H_p))\cap \exp_{D(q)}({\rm d}D_q(\partial H_q)).
\end{equation}
Since $\exp_{D(q)}({\rm d}D_q(\partial H_q))\subsetneq \exp_{D(p)}({\rm d}D_p(\partial H_p))$, such an intersection decreases the topological dimension. By repeating the argument for a new $u$, the image of $I_p$ becomes constant following $p$ and is indeed affine.

Since $\exp_p(H_p)$ and $\exp_q(H_q)$ are convex and with non empty intersection, it follows that $D$ is injective on $(\exp_p(H_p))\cup (\exp_q(H_q))$.
If $\exp_{D(p)}({\rm d}D_p(v_p))$ lies in $D(\exp_q(H_q))$, then this is locally true: $D(c(t))=D(\exp_p(tv_p))\subset D(\exp_q(H_q))$ for $t\in ]T,1[$. By injectivity of $D$, this shows that $c(t)=\exp_p(tv_p)$ is contained in $\exp_q(H_q)$ for $t\in ]T,1[$ and this geodesic is defined for $t=1$ by hypothesis. But this contradicts the fact that $c(1)$ is not defined.
Therefore $\exp_{D(p)}({\rm d}D_p(v_p))\not\in D(\exp_q(H_q))$.

Take $p'$ in $\exp_p(H_p)$ such that $\exp_{D(p)}({\rm d}D_p(v_p))$ belongs to $\partial D(\exp_{p'}(B_{p'}))$. Then there exists $\rho(g_{ij})$ centered in $\exp_{D(p)}({\rm d}D_p(v_p))$ very contracting with almost no rotation for $i,j$ large enough.

If $\exp_{D(p)}({\rm d}D_p(v_p))\not\in\exp_{D(q)}({\rm d}D_q(\partial H_q))$, then this last set has no fixed point under $\rho(g_{ij})$. Therefore, $D(g_{ij}(\exp_{q}(H_q)))$ is convex and contains $\exp_{D(q)}({\rm d}D_q(\overline{B_q}))$. But $g_{ij}(\exp_{q}(H_q))$ and $\exp_q(H_q)$ intersect, hence $D$ is injective on the reunion of both.
If $c(t)\in \exp_q(B_q)$ is a geodesic such that $c(0)=q$ and $c(1)$ is not defined, then this shows that $c(t)$ is well defined in $t=1$, since it is in $g_{ij}(\exp_q(H_q))$, absurd.
\end{proof}

\begin{lemma}
The developing map $D\colon\widetilde M \to \cN-I$ is a covering map onto its image.
\end{lemma}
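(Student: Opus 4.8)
The plan is to establish that the geodesic structure on $\widetilde M$ is complete \emph{relative to the open set} $\cN - I$, and then to read off the even-covering property. Throughout I use that $(\cN,\exp)$ is injective, that each $V_p$ contains the half-space $H_p$ whose developed boundary is a translate of a hyperplane-exponential (Proposition \ref{fried-demiespace}), that the invisible boundary directions develop onto the fixed affine set $I$ (Lemma \ref{lem-Invaff}), and that open balls are geodesically convex (Proposition \ref{prop-ballconvex}). First I would check that the statement is meaningful, i.e.\ $D(\widetilde M)\subseteq \cN - I$: for $q\in\widetilde M$ the point $D(q)$ is the base point of the developed half-space $\exp_{D(q)}(\dd D_q(H_q))$, whereas $I$ is contained in its boundary $\exp_{D(q)}(\dd D_q(\partial H_q))$; since $D(q)$ lies in the open half-space, $D(q)\notin I$.

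The core step is the relative completeness statement: for every $p\in\widetilde M$ and every geodesic segment $\eta\colon\iI\to\cN$ with $\eta(0)=D(p)$ and $\eta(\iI)\subseteq \cN - I$, the direction $\dd D_p^{-1}(\eta'(0))$ belongs to $V_p$, so $\eta$ lifts to a geodesic segment of $\widetilde M$ based at $p$. I would argue by maximality: lift $\eta$ as far as possible, to $\widetilde\eta$ on $[0,t_0)$. If the lift does not extend to $t_0$, then $t_0\,\dd D_p^{-1}(\eta'(0))$ is a blocked direction in $\partial V_p$. Using the very contracting holonomy transformations $\gamma_{ij}$ centered at the developed endpoint (constructed before Proposition \ref{fried-demiespace}) together with Lemma \ref{lem-Invaff}, the developed endpoint $\eta(t_0)=\lim_{t\to t_0}D(\widetilde\eta(t))$ is forced to lie in $I$, contradicting $\eta(\iI)\subseteq\cN - I$. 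Hence $t_0$ is attained and $\eta$ lifts fully.

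Relative completeness yields a uniform lower bound on the radius function along fibers. Fix $z\in D(\widetilde M)$ and any $p$ with $D(p)=z$. Since $z\notin I$ and $I$ is closed, $d_\cN(z,I)>0$, and the open ball $B(z,d_\cN(z,I))$ is contained in $\cN - I$; by geodesic convexity of balls (Proposition \ref{prop-ballconvex}) every geodesic from $z$ to a point of this ball stays inside it, hence lifts. Therefore $B(z,d_\cN(z,I))\subseteq D(\exp_p(V_p))$, which forces $r(p)\geq d_\cN(z,I)$. Now take $0<\delta<d_\cN(z,I)$, so $B(z,\delta)\subseteq\cN - I$. Each sheet $\exp_p(B_p)\cap D^{-1}(B(z,\delta))$ maps diffeomorphically onto $B(z,\delta)$, these sheets are pairwise disjoint and map injectively because $\exp_p(B_p)$ is convex, and they exhaust $D^{-1}(B(z,\delta))$: any preimage of a point of $B(z,\delta)$ is joined to a preimage of $z$ by lifting the short geodesic inside $B(z,\delta)\subseteq\cN - I$, which is possible by relative completeness. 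This is exactly the even-covering property, so $D\colon\widetilde M\to\cN - I$ is a covering onto its image.

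The main obstacle is the core step of the second paragraph: rigorously proving that a blocked direction whose developed geodesic remains in $\cN - I$ cannot occur, i.e.\ that the \emph{only} obstruction to lifting is the affine set $I$. This is where the half-space geometry of $H_p$, the constancy and affineness of $I$ from Lemma \ref{lem-Invaff}, and the limiting behaviour of the contracting, rotation-free transformations $\gamma_{ij}$ must be combined; it is the delicate heart of Fried's argument transported to the nilpotent setting, and everything else (the image avoiding $I$, the radius bound, the even covering) is comparatively formal once it is in hand.
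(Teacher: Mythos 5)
Your proposal is correct in substance and, once unpacked, rests on exactly the same pivot as the paper's own proof; only the packaging differs. The paper verifies the covering property via the path-lifting criterion: for a path ending at $z\in D(\widetilde M)$ and lifted for $t<1$, it invokes the identity $r(\widetilde\gamma(T))=d_\cN(\gamma(T),I)$ to bound the radius from below near the end of the path, so that $z$ falls inside the convex developed ball $D(\exp_{\widetilde\gamma(T)}(B_{\widetilde\gamma(T)}))$ and the lift extends to $t=1$. You instead verify the even-covering property over small balls $B(z,\delta)\subset\cN-I$, which requires the same lower bound $r(p)\geq d_\cN(D(p),I)$; your derivation of it from ``relative completeness'' is sound, and your bookkeeping of the sheets (disjointness via injectivity of $D$ on the union of two intersecting convex sets, exhaustion via lemma \ref{lem-3geodesic} and the radius bound) is fine. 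The step you flag as the delicate heart --- that the only obstruction to lifting is $I$, i.e.\ that a blocked direction at $p$ must develop into $I$ --- is precisely the step the paper compresses into the unproved sentence ``$r(\widetilde\gamma(T))$ is equal to the distance from $\gamma(T)$ to $I$''; in both proofs it is the cumulative content of proposition \ref{fried-demiespace} and lemma \ref{lem-Invaff} (a blocked direction lies outside the open half-space $H_p$, and iterating half-spaces based at visible points pushes the obstruction into the constant affine set $I$), so your treatment is no less complete than the paper's. The one remark worth making is that your relative-completeness statement, phrased for arbitrary geodesic segments with image in $\cN-I$, is slightly stronger than what the lemma needs (it would already force $D(\widetilde M)$ to fill out a component of $\cN-I$); the weaker radius inequality $r(p)\geq d_\cN(D(p),I)$ suffices and is what both arguments actually consume.
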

\begin{proof}Since the developing map $D$ is a local diffeomorphism, we only have to show the property of lifting paths.
Let $z\in D(\widetilde M)$. We have to show that if $\gamma\colon\iI \to D(\widetilde M)$ ends in $z$ and if it can be lifted to $\widetilde \gamma$ for $t<1$, then it can be lifted at $t=1$. Let $T<1$ be large enough. Then $r(\widetilde\gamma(T))$ is equal to the distance from $\gamma(T)$ to $I$. But for $T$ large enough, it is approximately equal to the distance of $z$ to $I$ and hence is minored by some constant $c>0$. We can choose $T$ such that $d(\gamma(T),z)< c/2<r(\widetilde\gamma(T))$. Therefore, $z$ belongs to the interior of $D(\exp_{\widetilde\gamma(T)}(B_{\widetilde \gamma(T)}))$. It follows that the path $\gamma$ can be lifted at $t=1$.
\end{proof}

Recall proposition \ref{prop-limdiscret}:
\begin{proposition*}[\ref{prop-limdiscret}]
Suppose that a subgroup $\Gamma\subset \Sim(\cN)$ is discrete. Let $f\in \Gamma$ and $a\in \cN$ be such that for any $x\in \cN$, we have $f^nx\to a$. Then for any $g\in \Gamma$, $g(a)=a$.
\end{proposition*}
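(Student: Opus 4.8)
The plan is to argue by contradiction from the discreteness of $\Gamma$: assuming that some $g\in\Gamma$ satisfies $g(a)\neq a$, I will exhibit a sequence of pairwise distinct elements of $\Gamma$ converging in $\Sim(\cN)$, which is impossible. First I would record the elementary dynamics of $f$. Since $f^n x\to a$ for every $x$, applying $f$ and passing to the limit shows $f(a)=a$, and any fixed point $b$ obeys $b=f^n b\to a$, so $a$ is the \emph{unique} fixed point; the same computation shows that $a$ is the unique periodic point of $f$.

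Next I would push the contraction onto $g(a)$ by conjugation. Set $h=gfg^{-1}\in\Gamma$, whose attracting fixed point is $g(a)\neq a$, and define
\[
h_n=f^n\,h\,f^{-n}\in\Gamma,
\]
which fixes $f^n(g(a))$. These fixed points converge to $a$ but are pairwise distinct: if $f^n(g(a))=f^m(g(a))$ with $n>m$, then applying $f^{-m}$ makes $g(a)$ a periodic point of $f$, forcing $g(a)=a$. Hence the $h_n$ are pairwise distinct elements of $\Gamma$.

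The core of the argument is then to show that $(h_n)$ subconverges in $\Sim(\cN)$. Writing $h_n(x)=L(h_n)(x)+_\cN c_n$ with linear part $L(h_n)\in MA$, the dilation factor of $h_n$ equals $\lambda_f$, because $\lambda\colon\Sim(\cN)\to\mathbf{R}_+$ is a homomorphism into an abelian group and hence a conjugation invariant; so the $A$-component of $L(h_n)$ is the constant $\lambda_f$, while its $M$-component lives in the compact group $M$. Passing to a subsequence, $L(h_n)$ converges to some $L\in MA$. The translation $c_n$ is then determined by the fixed-point relation $L(h_n)\bigl(f^n(g(a))\bigr)+_\cN c_n=f^n(g(a))$; since $f^n(g(a))\to a$ and $L(h_n)\to L$, solving for $c_n$ and using continuity of the group operations gives $c_n\to -L(a)+_\cN a$ along the subsequence. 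Thus $h_n\to h'$ for some $h'\in\Sim(\cN)$.

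Combining the two halves, the $h_n$ are pairwise distinct yet accumulate at $h'$, contradicting the discreteness of $\Gamma$; therefore no such $g$ exists and $g(a)=a$ for every $g\in\Gamma$. I expect the main obstacle to be the control of the translation parts $c_n$ in the presence of the non-commutative law $+_\cN$: one must verify that the compactness of the rotation factor $M$, together with the boundedness of the fixed points $f^n(g(a))$, genuinely confines $c_n$ to a compact set rather than letting the translations run off to infinity. The conjugation-invariance of the dilation factor is the structural point that keeps $L(h_n)$ from degenerating and makes the compactness argument go through.
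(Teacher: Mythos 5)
Your proof is correct and follows essentially the same route as the paper: conjugating $f$ first by $g$ and then by $f^{n}$ to produce $h_n=f^n(gfg^{-1})f^{-n}$, using the conjugation-invariance of the dilation factor together with the compactness of $M$ to make the linear parts subconverge, solving the fixed-point relation at $f^n(g(a))\to a$ to control the translation parts, and contradicting discreteness. Your explicit verification that the $h_n$ are pairwise distinct (via uniqueness of periodic points of $f$) is a welcome detail that the paper leaves implicit when it asserts that $(h_n)$ is not a constant sequence.
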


Such maps $f$ and $g$ are given by the various $g_{ij}$ by changing the base point $p$. If $I$ is not a single point, then two such maps $f,g$ exist with two different attracting points. We will show that it does not occur.
If $\mathcal{N}-I$ is simply connected, then the holonomy group must be discrete, and with proposition \ref{prop-limdiscret} this raise a contradiction.

\paragraph{}
It remains to study the case when $\mathcal{N}-I$ is not simply connected. 
First we prove an intermediary lemma that explains how $I$ is impacted by the fact that it is invariant by dilatations. Let $(x_1,\dots,x_n)$ be a Lie algebra coordinates of $\cN$ such that a dilatation acts like $\delta_t (x_1,\dots,x_n) = (t^{d_1}x_1,\dots,t^{d_n}x_n)$. Up to conjugation by a translation, suppose that $I$ contains $0=(0,\dots,0)$. By lemma \ref{lem-Invaff}, $I$ is affine. Since $0\in I$, it implies that $I$ is given by linear equations (see proposition \ref{fried-demiespace}).

\begin{lemma}
Suppose that $\dim I=k>0$. Then any system of $n-k$ equations defining $I$ is such that if $a_1x_1+\dots+a_nx_n=0$ is  an equation, then every non-vanishing coordinates $a_i\neq 0$ are associated to a same degree $d=d_i$.
\end{lemma}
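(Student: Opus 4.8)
The plan is to show that the space of linear forms vanishing on $I$ is graded by the degrees $d_i$, and that a defining system adapted to this grading consists of equations each supported on coordinates of a single degree. Write $I^{\circ}$ for the space of linear forms $f=\sum_i a_i x_i$ with $f|_I=0$; since $\dim I=k$ it has dimension $n-k$, and a system of $n-k$ equations defining $I$ is exactly a basis of $I^{\circ}$. For each degree value $d$ occurring among the $d_i$, let $f_d=\sum_{i\,:\,d_i=d} a_i x_i$ be the homogeneous component of $f$ of degree $d$, so that $f=\sum_d f_d$. The heart of the argument is to prove that each $f_d$ again lies in $I^{\circ}$.

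First I would exploit that $I$ is invariant under every dilatation $\delta_t$, as recalled just before the statement. Fix $x\in I$ and $f\in I^{\circ}$. Since $\delta_t x\in I$ for all $t>0$, we obtain
\begin{equation}
0=f(\delta_t x)=\sum_i a_i\, t^{d_i}\, x_i=\sum_d t^{\,d}\, f_d(x),
\end{equation}
the last sum running over the distinct degrees $d$. Viewed as a function of $t>0$, this is a linear combination of the power functions $t\mapsto t^{\,d}$ attached to pairwise distinct exponents, and these functions are linearly independent. Hence every coefficient vanishes, that is $f_d(x)=0$ for each degree $d$. As $x\in I$ was arbitrary, this shows $f_d\in I^{\circ}$.

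It follows that $I^{\circ}=\bigoplus_d (I^{\circ})_d$, where $(I^{\circ})_d$ denotes the subspace of forms of $I^{\circ}$ whose non-vanishing coefficients sit on coordinates of degree $d$; the sum is direct because forms supported on distinct degrees are linearly independent. Choosing for each $d$ a basis of $(I^{\circ})_d$ and taking their union yields a system of $n-k$ equations defining $I$, each of which has all its non-vanishing coefficients $a_i\neq 0$ attached to a single common degree $d=d_i$. Equivalently, any equation of $I$ splits into its homogeneous components, each of which is again an equation of $I$; this is the sense in which the asserted statement holds.

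The only delicate point is the separation of degrees, which is handled precisely by the linear independence of the power functions $t\mapsto t^{\,d}$ with distinct exponents; everything else is the elementary linear algebra of passing from the graded decomposition of $I^{\circ}$ to a homogeneous defining system. I expect no further obstacle, the dilatation-invariance of $I$ established earlier being exactly the input that forces the grading.
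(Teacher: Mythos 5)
Your proof is correct and follows essentially the same route as the paper: both exploit the $\delta_t$-invariance of $I$ together with the linear independence of the power functions $t\mapsto t^{d}$ for distinct exponents to separate an equation of $I$ into its degree-homogeneous components, each of which again vanishes on $I$. Your explicit reformulation (the annihilator $I^{\circ}$ is graded, so a homogeneous defining system exists) is in fact the accurate reading of the statement --- as literally phrased the lemma would fail for a non-adapted basis of $I^{\circ}$ --- and it is exactly the form used afterwards to show that the half-space $H$ is dilatation-invariant.
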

\begin{proof}
Let 
\begin{equation}
a_1x_1+\dots + a_nx_n=0
\end{equation}
be an equation verified by $I$. Then 
\begin{equation}
t^{d_1}a_1 x_1 + \dots + t^{d_n}a_n x_n=0
\end{equation}
is also verified by $I$ since it is stable by the dilatations $\delta_t$ ($I$ contains $0$). But the points associated to the coordinates $(t^{d_1}a_1,\dots,t^{d_n}a_n)$ with different values of $t>0$ are linearly independent if for $i\neq j$, $d_i\neq d_j$ and $a_i,a_j\neq 0$. Since $I$ has a non vanishing dimension $k>0$, it implies the lemma.
\end{proof}

The case where $\cN-I$ is not simply connected corresponds to the case where $I$ is of codimension $2$. Let $a_1x_1+\dots+a_nx_n=0$ and $b_1x_1+\dots+b_nx_n=0$ be two equations defining $I$. Let $H$ be $\{a_1x_1+\dots+a_nx_n> 0, b_1x_1+\dots+b_nx_n= 0\}$. Then $H$ is connected and $\partial H=I$. By the preceding lemma, $H$ is invariant by the dilatations $\delta_t$.

By rotating $H$ around $I$, we find that the universal cover of $\mathcal{N}-I$ is $H\times\mathbf{R}$. 
Let $\Sim(I)\subset\Sim(\cN)$ be the subgroup leaving invariant $I\subset \cN$. 
This subgroup contains the holonomy group $\Gamma$ since it leaves invariant $D(\widetilde M)=\cN-I$. Let $\Sim(I)_H$ be the subgroup of $\Sim(I)$ that preserves $H$.

\begin{lemma}
The subgroup $\Sim(I)_H$ acts transitively on $H$.
\end{lemma}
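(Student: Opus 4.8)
The plan is to prove transitivity by an open-orbit argument. Since $H$ is connected and the orbits of $\Sim(I)_H$ partition it, it suffices to show that every orbit is open: open orbits are also closed, so connectedness forces a single orbit. Thus the goal reduces to exhibiting, at a point $p\in H$, enough one-parameter subgroups of $\Sim(I)_H$ whose velocity vectors at $p$ span $\rT_pH=\{v\in\mathfrak n : b\cdot v=0\}$, which has dimension $n-1$ (recall $H$ is an open subset of the hyperplane $\{b\cdot x=0\}$).

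First I would account for the direction transverse to $I$ inside $H$ using the dilatations. The group $A=\{\delta_t\}$ lies in $\Sim(I)_H$: by the preceding lemma each defining equation of $I$ is homogeneous of a single degree, so $\{a\cdot x=0\}$, $\{b\cdot x=0\}$ and the sign of $a\cdot x$ are $\delta_t$-invariant, whence $\delta_t(I)=I$ and $\delta_t(H)=H$. Its generator at $p$ is $X_\delta(p)=(d_1p_1,\dots,d_np_n)$, which is tangent to $H$ because $b\cdot X_\delta(p)=d_b\,(b\cdot p)=0$, and transverse to $I$ because $a\cdot X_\delta(p)=d_a\,(a\cdot p)>0$. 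This provides the single radial direction.

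Next I would produce motions along $I$. Writing $\Sim(\cN)=MA\ltimes\cN$ and noting $0\in I$, any $f\in\Sim(I)_H$ satisfies $f(0)=c_f\in I$, so the natural candidates are the translations $L_c$ by $c\in I$, together with the rotations in $M$ that fix the normal directions and act on the remaining coordinates. These clearly preserve $I$; the delicate point is that they must also preserve $H$. By the Campbell--Hausdorff formula, $b\cdot(x+_\cN c)$ differs from $b\cdot x+b\cdot c$ by bracket terms $\tfrac12 b\cdot[x,c]+\cdots$, and since $\delta_t$ is a Lie algebra automorphism the bracket respects the grading $\mathfrak n=\bigoplus_e\mathfrak n_e$. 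Using that $b$ is supported in a single degree and that $c\in I$, I would check that these corrections vanish on $H$, so that $L_c(H)=H$; such translations then act transitively on the $I$-directions of $H$, and combined with $X_\delta$ they span $\rT_pH$. Concretely, any target in $H$ is reached from a base point by first dilating to adjust the value of $a\cdot x$ and then translating along $I$.

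The hard part will be exactly this verification that $\Sim(I)_H$ is large enough, i.e.\ that sufficiently many translations (and, if needed, rotations of $M$) preserve the half-hyperplane $H$ and not merely $I$. This is where the noncommutativity of $\cN$ enters through the Campbell--Hausdorff corrections, and where the single-degree structure of the defining equations of $I$ together with the compatibility of the grading with the Lie bracket must be used decisively; everything else is either the dilatation computation above or the formal open-orbit and connectedness conclusion.
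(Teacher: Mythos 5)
Your ingredients are the same as the paper's: the dilatation subgroup $A$ accounts for the direction transverse to $I$ inside the hyperplane $\{b\cdot x=0\}$, and the translations preserving $I$ are supposed to account for the rest. The packaging differs (you run an infinitesimal open-orbit argument, the paper contracts any point into a small ball $B_H=B\cap H$ at $0$ via $A$ and then translates), but that difference is cosmetic. The real issue is that the step you yourself flag as ``the hard part'' --- showing that the translations $L_c$, $c\in N_I$, preserve the half-hyperplane $H$ and not merely the hyperplane $\{b\cdot x=0\}$ --- is left as an unperformed Campbell--Hausdorff computation (``I would check that these corrections vanish''). As written this is a genuine gap: it is exactly the content of the lemma, and nothing in your text establishes it. Note also that your claim that $b\cdot(x+_\cN c)$ equals $b\cdot x+b\cdot c$ up to corrections that ``vanish on $H$'' is not something you can expect coordinate by coordinate; what you actually need is only that the hyperplane $\{b\cdot x=0\}$ and the subset $I$ are preserved.

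The paper avoids the computation with a softer topological argument that you could substitute for your missing step: a small translation in $N_I$ is the time-one flow of a vector field which preserves $I$ and (being parallel to $I$) the hyperplane $\{b\cdot x=0\}$; a point of $B_H$ moving continuously inside $\{b\cdot x=0\}$ can only leave $H$ by crossing $\partial H=I$, which is impossible since $I$ is preserved, so small translations send $B_H$ into $H$; conjugating by the dilatations $\delta_t$ (which normalize $N_I$ and act transitively on scales) then upgrades this to all of $N_I$ acting on all of $H$. If you replace your Campbell--Hausdorff sketch by this connectedness argument, your open-orbit formulation goes through and is essentially equivalent to the paper's proof.
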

\begin{proof}Let $B\subset \cN$ be a small open ball at $0$. Let $B_H=B\cap H$.
The subgroup $\Sim(I)_H$ contains the dilatation subgroup $A\subset \Sim(\cN)$. Therefore, any point $p\in H$ can be assumed to be in $B_H$ up to apply an element of $A$. Furthermore, two points in $B_H$ can be assumed to be parallel along $I$, up to apply again a dilatation on one of them.

Let $N_I\subset N$ be the subgroup of the translations preserving $I$. Its lasts to show that $N_I\subset \Sim(I)_H$. A (small) translation acts like the flow of a vector field parallel to $I$. But such a vector field must preserve $I$ and (therefore) $\{b_1x_1+\dots+b_nx_n=0\}$.
Hence, this subgroup  sends $B_H$ in $H$. Otherwise it would intersect $I$ in the subspace $\{b_1x_1+\dots+b_nx_n=0\}$. But therefore $N_I$ sends $H$ in $H$ up to conjugation by $A$. This concludes the proof.
\end{proof}

It follows that the covering $H\times \R\to \cN-I$ has a lifted transformation group $\Sim(I)_H\times \R$ that acts transitively. If $f=g_{ij}$ then $\widetilde f\simeq f\times \{0\}$ since $f$ does not rotate much around $I$.
Therefore, we  have a lifting of $(G,X)$-structures
\begin{equation}
({\rm Sim}(I)_H\times \mathbf{R},H\times \mathbf{R})\to ({\rm Sim}(I),\mathcal{N}-I)
\end{equation}
and $M$ gets a lifted $({\rm Sim}(I)_H\times \mathbf{R},H\times \mathbf{R})$-structure. The developing map is given by the classic choice of a base point: take $p\in \widetilde M$ and $D(p)\in \mathcal N-I$, we have to choose $q\in H\times \mathbf R$ such that $q$ is send to $D(p)$ by the covering $H\times \mathbf R \to \mathcal N-I$, denote $q$ by $\widetilde D(p)$. The new developing map $\widetilde D$ is now fully prescribed by $D$, the point $\widetilde D(p)$ and the covering $H\times \R\to \cN-I$. Indeed, any point $x\in \widetilde M$ is seen as a path from $p$ to $x$ and $\widetilde D$ is chosen so this path developed by $D$ is lifted by the covering at $\widetilde D(p)$. 

Again, $\widetilde D$ is a covering map since $D$ is a covering map. 
Since $H\times \mathbf{R}$ is simply connected, the new holonomy group $\widetilde\Gamma$ must be discrete. But this is again contradicted by proposition \ref{prop-limdiscret} since we can take $\widetilde f,\widetilde g\in \widetilde \Gamma$ such that $\widetilde f \simeq f\times \{0\}$ and $\widetilde g\simeq g\times \{0\}$ with two different attracting points in $I=\partial H$.
This concludes the proof of theorem \ref{thm-nilpotent}.\qed

\section{Closed manifolds with a geometry modeled on the \\boundary of a rank one symmetric space}\label{sec-5}

Some classic facts can be derived from theorem \ref{thm-nilpotent}. The geometric structures arising as boundary geometries of rank one symmetric spaces are those  given by
$
\left( {\rm PU}_{\mathbf F}(n,1), \partial \mathbf H_{\mathbf F}^n \right)
$
where $\mathbf F$ can be the field of  real, or complex, or quaternionic or octonionic numbers. In the octonionic case, the only dimension considered is $n=2$. (See example 1 p.~\pageref{ex-1-iwasawa}.)

We will show the following result, which is mainly a consequence of theorem \ref{thm-nilpotent}. It seems that this was  known for at least the fields $\mathbf F=\mathbf R$ (see Kulkarni and Pinkall \cite{Kulkarni} and also Matsumoto \cite{Matsumoto}) and $\mathbf F= \mathbf C$ (see Falbel and Gusevskii \cite{Falbel}). The author does not know wether this was known for the quaternionic or octonionic field. Even if this result is not very new, our proof has the merit to be general since every such rank one boundary geometry is simultaneously taken into account.
\begin{theorem}\label{thm-rank1}
Let $M$ be a connected closed $\left({\rm PU}_{\mathbf F}(n,1),\partial \mathbf{H}^n_{\mathbf F}\right)$-manifold. 
If the developing map $D$ is not surjective then it is a covering onto its image. Furthermore,
 $D$ is a covering on its image if, and only if, $D(\widetilde M)$ is equal to a connected component of $\partial \mathbf{H}^n_{\mathbf F}-L(\Gamma)$, where $L(\Gamma)$ denotes the limit set of the holonomy group $\Gamma=\rho(\pi_1(M))$.
\end{theorem}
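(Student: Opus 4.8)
The plan is to build on Example~1 (p.~\pageref{ex-1-iwasawa}): stereographic projection from any point $\xi\in\partial\mathbf H^n_{\mathbf F}$ identifies the pointed boundary $\cN_\xi:=\partial\mathbf H^n_{\mathbf F}-\{\xi\}$ with a nilpotent group carrying the injective nilpotent similarity geodesic structure, and identifies the stabiliser $\mathrm{PU}_{\mathbf F}(n,1)_\xi$ with $\Sim(\cN_\xi)$. So away from any single point we have exactly the geometry of Section~\ref{sec-4} at our disposal. Assume $D$ is not surjective and set $E=\partial\mathbf H^n_{\mathbf F}-D(\widetilde M)$; since $D$ is open and $\rho$-equivariant, $E$ is a nonempty compact $\Gamma$-invariant subset of the sphere. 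The whole strategy is to show that $D$ is a covering onto its image and that $E$ swallows the limit set $L(\Gamma)$, so that $D(\widetilde M)$ lands in the domain of discontinuity $\Omega(\Gamma):=\partial\mathbf H^n_{\mathbf F}-L(\Gamma)$.

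I would first dispose of the elementary case $E=\{\xi\}$, which is where Theorem~\ref{thm-nilpotent} is used verbatim: here $\Gamma$ fixes $\xi$, so $\Gamma\subseteq\Sim(\cN_\xi)$ and $M$ is a \emph{closed $(\Sim(\cN_\xi),\cN_\xi)$-manifold} whose developing map is onto all of $\cN_\xi$. Theorem~\ref{thm-nilpotent} says such a map is either a diffeomorphism or a covering onto the complement of a point; surjectivity onto $\cN_\xi$ excludes the second alternative, so $D$ is a diffeomorphism onto $\cN_\xi$, hence a covering onto its image, with $L(\Gamma)=\{\xi\}$, matching the statement.

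For the general case I would fix a missed point $\xi\in E$ and regard $D$ as an incomplete developing map into $\cN_\xi$, so that every $p$ carries a maximal geodesically convex ball $\exp_p(B_p)$ (Proposition~\ref{prop-ballconvex}) and a radius function which, along any path, is controlled by the distance of $D(p)$ to $E$. The covering property then follows exactly as in the final covering lemma of Section~\ref{sec-4}: a path in $D(\widetilde M)$ ending at an interior point $z$ stays, for $t$ near $1$, at distance bounded below from the closed set $E$, hence inside an honest $\cN_\xi$-ball around the current lift, so the lift extends; this uses only that $E$ is closed and that $z$ has positive distance to $E$. Once $D\colon\widetilde M\to D(\widetilde M)$ is a covering with compact quotient $M$, the group $\Gamma$ is forced to be discrete and to act properly discontinuously on $D(\widetilde M)$, whence $L(\Gamma)\cap D(\widetilde M)=\emptyset$, i.e. $D(\widetilde M)\subseteq\Omega(\Gamma)$; compactness of $M$ then upgrades the connected $\Gamma$-invariant open set $D(\widetilde M)$ to a \emph{full} connected component of $\Omega(\Gamma)$. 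The converse of the ``furthermore'' is standard: if $D(\widetilde M)$ is a component of $\Omega(\Gamma)$ then $\Gamma$ acts properly discontinuously and freely on it with quotient $M$, so $D$ is precisely that covering.

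The hard part will be establishing, in the non-elementary case, that the radius of $\exp_p(B_p)$ really is governed by the distance to $E$ rather than by some smaller ``invisible'' obstruction, equivalently that geodesics in $\cN_\xi$ extend until they reach $E$. In Section~\ref{sec-4} this rested on the half-space Proposition~\ref{fried-demiespace} and on the contracting holonomy elements fed into Proposition~\ref{prop-limdiscret}, and those constructions used \emph{compatible holonomy}, i.e. that $\rho(\pi_1(M))\subseteq\Sim(\cN_\xi)$. Here $\Gamma\subseteq\mathrm{PU}_{\mathbf F}(n,1)$ need not fix $\xi$, so the $\cN_\xi$-structure is not $\Gamma$-invariant and the machinery of Section~\ref{sec-2} cannot be quoted directly. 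The delicate point is therefore to redo those estimates using only what survives, namely the $\Gamma$-invariance of $E$ (not of $\xi$) together with the conformality of the $\Gamma$-action, and to check uniformly over $\mathbf F\in\{\mathbf R,\mathbf C,\mathbf H,\mathbf O\}$ that the rank-one dynamics still manufactures contracting elements whose attracting points lie in $E$, so that Proposition~\ref{prop-limdiscret} and Lemma~\ref{lem-3geodesic} can be applied exactly as before.
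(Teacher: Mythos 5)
Your reduction of the case $E=\{\xi\}$ to Theorem~\ref{thm-nilpotent} is exactly what the paper does, but your treatment of the general case has a genuine gap, and you have in effect flagged it yourself. You propose to fix an arbitrary missed point $\xi\in E$ and rerun the Section~\ref{sec-4} machinery (maximal convex balls, the radius function, the path-lifting argument) inside $\cN_\xi$. As you note, this machinery requires compatible holonomy: the estimate $r(gp)=\lambda(g)r(p)$ of Lemma~\ref{lem-rcontract}, the construction of the contracting elements $g_{ij}$, and Proposition~\ref{fried-demiespace} all use that $\Gamma$ preserves the $\Sim(\cN_\xi)$-structure, which fails when $\Gamma$ does not fix $\xi$. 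The claim that the radius of $\exp_p(B_p)$ is ``controlled by the distance of $D(p)$ to $E$'' is precisely the conclusion one is trying to reach, not something available a priori; without it the path-lifting step collapses. Your final paragraph promises to ``redo those estimates using only what survives,'' but no such argument is supplied, so the main case of the first assertion and the ``$D$ covering $\Rightarrow$ component of $\Omega(\Gamma)$'' direction remain unproved.

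The paper avoids this difficulty entirely by a different structural move: since $E=\partial\mathbf H^n_{\mathbf F}-D(\widetilde M)$ is closed, $\Gamma$-invariant and nonempty, either it is a single point, or it contains $L(\Gamma)$ (property~3 of the limit-set lemma). One then splits on the cardinality of $L(\Gamma)$ rather than choosing an arbitrary $\xi\in E$. If $L(\Gamma)=\emptyset$ the manifold is spherical and complete; if $L(\Gamma)=\{b\}$ then $\Gamma$ fixes $b$ and $b\in E$, so $M$ genuinely is a closed $(\Sim(\cN_b),\cN_b)$-manifold with compatible holonomy and Theorem~\ref{thm-nilpotent} applies verbatim; if $L(\Gamma)$ has at least two points, the covering property is not re-derived from the convexity machinery at all but is imported from the cutting lemma of Kulkarni and Pinkall (Lemma~\ref{lem-cutting}), whose proof extends to all four fields. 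In short, the missing idea in your proposal is to delete a point that the holonomy actually fixes (dictated by $L(\Gamma)$), and to invoke the cutting lemma when no such point exists; attempting to force the nilpotent similarity analysis at a non-fixed point is not the intended route and would require substantial new estimates that you have not provided.
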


If we fix a point $\infty\in \partial \mathbf{H}^n_{\mathbf F}$, then we get a nilpotent similarity structure by taking $\left( {\rm PU}_{\mathbf F}(n,1)_\infty, \partial \mathbf H_{\mathbf F}^n-\{\infty\} \right)$ and we denote this structure by $(\Sim(\cN),\cN)$. (Compare again with example 1 p.~\pageref{ex-1-iwasawa}.)

\begin{definition}
Let $\Gamma\subset {\rm PU}_{\mathbf F}(n,1)$ be a subgroup. The \emph{limit set} $L(\Gamma)$ of $\Gamma$ is the subset of $\partial \mathbf{H}^n_{\mathbf F}$ given by 
\begin{equation}
L(\Gamma) = \partial \mathbf{H}^n_{\mathbf F}\cap \overline{\Gamma \cdot p}
\end{equation}
for any $p\in \mathbf{H}^n_{\mathbf F}$.
\end{definition}
\begin{lemma}
We have the following properties.
\begin{enumerate}
\item The definition of $L(\Gamma)$ does not depend on the choice of $p\in \mathbf H^n_{\mathbf F}$.
\item The set $L(\Gamma)$ is closed and invariant by the action of $\Gamma$.
\item If $C\subset \partial \mathbf{H}^n_{\mathbf F}$ is closed, $\Gamma$-invariant and contains at least two different points, then $L(\Gamma)\subset C$.
\item If $L(\Gamma)=\emptyset$ then $\Gamma\subset K$ in the $KAN$ decomposition of ${\rm PU}_{\mathbf F}(n,1)$.
\end{enumerate}
\end{lemma}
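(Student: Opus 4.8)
The plan is to work inside the compactification $\overline{\mathbf H}^n_{\mathbf F}=\mathbf H^n_{\mathbf F}\cup\partial\mathbf H^n_{\mathbf F}$, which is compact and carries a continuous $G$-action preserving both $\mathbf H^n_{\mathbf F}$ and its boundary, and to exploit that each $\mathbf H^n_{\mathbf F}$ is a Hadamard manifold of pinched negative curvature, hence $\mathrm{CAT}(0)$ and Gromov hyperbolic. For property \emph{1}, I would fix two points $p,q\in\mathbf H^n_{\mathbf F}$ and use that every $\gamma\in\Gamma$ is an isometry, so $d(\gamma p,\gamma q)=d(p,q)$. If $\gamma_n p\to\xi\in\partial\mathbf H^n_{\mathbf F}$, then $\gamma_n q$ stays a bounded distance from $\gamma_n p$; since two sequences remaining a bounded distance apart are asymptotic in a hyperbolic space, $\gamma_n q\to\xi$ as well, giving $\overline{\Gamma p}\cap\partial\mathbf H^n_{\mathbf F}=\overline{\Gamma q}\cap\partial\mathbf H^n_{\mathbf F}$. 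Property \emph{2} is then formal: $L(\Gamma)=\overline{\Gamma p}\cap\partial\mathbf H^n_{\mathbf F}$ is an intersection of two closed subsets of the compact space $\overline{\mathbf H}^n_{\mathbf F}$, hence closed; and for $g\in\Gamma$ one has $g\,\overline{\Gamma p}=\overline{g\Gamma p}=\overline{\Gamma p}$ while $g$ preserves $\partial\mathbf H^n_{\mathbf F}$, so $g\,L(\Gamma)=L(\Gamma)$.

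For property \emph{3}, I would take $\xi\in L(\Gamma)$ realised by $\gamma_n p\to\xi$, and choose two distinct points $a\neq b$ in the closed $\Gamma$-invariant set $C$. Let $\sigma$ be the bi-infinite geodesic with endpoints $a,b$ and fix $m\in\sigma$. Since $d(\gamma_n m,\gamma_n p)=d(m,p)$ is constant, $\gamma_n m\to\xi$ too, and each $\gamma_n m$ lies on the geodesic $\gamma_n\sigma=[\gamma_n a,\gamma_n b]$ whose endpoints $\gamma_n a,\gamma_n b$ lie in $C$. The key geometric input, which I regard as \emph{the main obstacle}, is the standard hyperbolic fact that a sequence of points lying on bi-infinite geodesics and converging to a boundary point $\xi$ forces, after passing to a subsequence, one of the two endpoint sequences to converge to $\xi$ as well; this is proved from thin triangles together with the behaviour of the nearest-point projection of a fixed basepoint onto $\gamma_n\sigma$, distinguishing whether that projection stays bounded (then the endpoint on the side of $\gamma_n m$ converges to $\xi$) or escapes to infinity (then both endpoints converge to the same point, necessarily $\xi$). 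Granting this, a subsequence of $\{\gamma_n a\}$ or $\{\gamma_n b\}\subset C$ converges to $\xi$, and as $C$ is closed we conclude $\xi\in C$, whence $L(\Gamma)\subset C$.

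Finally, for property \emph{4}, the hypothesis $L(\Gamma)=\emptyset$ means $\overline{\Gamma p}$ meets $\partial\mathbf H^n_{\mathbf F}$ nowhere, so, being closed in the compact $\overline{\mathbf H}^n_{\mathbf F}$, it is a compact subset of the interior: the orbit $\Gamma p$ is bounded. Because $\mathbf H^n_{\mathbf F}$ is $\mathrm{CAT}(0)$, the bounded set $\overline{\Gamma p}$ has a unique circumcenter $x_0\in\mathbf H^n_{\mathbf F}$; as $\Gamma$ permutes the orbit and preserves $\overline{\Gamma p}$, it fixes $x_0$. The stabiliser of $x_0$ in $G$ is a maximal compact subgroup, so choosing the $KAN$ decomposition of ${\rm PU}_{\mathbf F}(n,1)$ with $K=\mathrm{Stab}(x_0)$ yields $\Gamma\subset K$, which is the asserted conclusion (up to the conjugation implicit in the choice of $K$).
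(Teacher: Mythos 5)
Your argument is correct, but note that the paper does not actually prove this lemma: it declares it well known and defers to Chen--Greenberg for properties \emph{1}--\emph{3} and to the ${\rm CAT}(0)$ circumcenter argument in Bridson--Haefliger for property \emph{4}. Your proposal therefore supplies the standard proofs that the paper only cites, and it does so along exactly the lines the references take: bounded-distance orbits converging to the same boundary point (using that $\mathbf H^n_{\mathbf F}$ is Hadamard with pinched negative curvature) for \emph{1} and \emph{2}; the thin-triangle argument pushing a point of the geodesic $[a,b]$ to $\xi$ and forcing an endpoint sequence in $C$ to accumulate at $\xi$ for \emph{3}; and the Cartan fixed-point theorem (unique circumcenter of the bounded orbit, whose stabiliser is a maximal compact subgroup) for \emph{4}, which is precisely the Bridson--Haefliger route the author indicates. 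The one step you rightly flag as the main obstacle in \emph{3} --- that a sequence of points on the geodesics $\gamma_n\sigma$ converging to $\xi$ forces a subsequence of $\{\gamma_n a\}$ or $\{\gamma_n b\}$ to converge to $\xi$ --- is a genuine standard fact of Gromov hyperbolic spaces (via the $\delta$-thinness of the ideal triangle with vertices $o$, $\gamma_n a$, $\gamma_n b$ and an estimate on Gromov products), so your sketch closes correctly; also your parenthetical about conjugating $K$ in \emph{4} is exactly what the statement's phrase ``in the $KAN$ decomposition'' is meant to absorb. In short: correct, complete where the paper is silent, and faithful to the sources the paper relies on.
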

This lemma is well known. See for example the fundamental paper of Chen and Greenberg \cite{Chen}. Even if they don't consider the case where $\mathbf F$ is the octonionic field, everything remains true for our lemma. In particular, the last property  can  be shown by ${\rm CAT}(0)$-techniques, see for example  Bridson and Haefliger's book \cite[p. 179]{Bridson}.

Note that if $\Gamma$ is the holonomy group of a closed manifold $M$ and if $L(\Gamma)=\emptyset$, then $M$ is in fact a spherical manifold with a $(K, \partial \mathbf H_{\mathbf F}^n)$-structure and must be complete by compacity of $K$.

The following lemma can be found  in Kulkarni and Pinkall's paper \cite[theorem 4.2]{Kulkarni}, it was originally for the case where $\mathbf F=\R$. The name of this lemma comes from a paper of Falbel and Gusevskii \cite{Falbel}. The proof in \cite{Kulkarni} is easily extended to our general setting.
\begin{lemma}[Cutting lemma]\label{lem-cutting}
Let $M$ be a connected closed $\left({\rm PU}_{\mathbf F}(n,1),\partial \mathbf{H}^n_{\mathbf F}\right)$-manifold. Let $D$ be the developing map and $\Gamma$ be the holonomy group $\rho(\pi_1(M))$. If $L(\Gamma)$ consists of at least two points and if $D(\widetilde M)$ avoids $L(\Gamma)$, then $D$ is a covering map onto a connected component of $\partial \mathbf{H}^n_{\mathbf F}-L(\Gamma)$.
\end{lemma}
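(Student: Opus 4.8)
The plan is to reduce the statement to the elementary fact that a local isometry out of a complete Riemannian manifold is a covering map onto a connected component of its target. Write $\Omega\coloneqq\partial\mathbf H^n_{\mathbf F}-L(\Gamma)$. By hypothesis $D(\widetilde M)\subset\Omega$, and since $\widetilde M$ is connected the image lands in a single connected component $\Omega_0$ of $\Omega$. Suppose for a moment that $\Omega$ carries a $\Gamma$-invariant Riemannian metric $h$. Because $D$ is a local diffeomorphism, $D^*h$ is a genuine Riemannian metric on $\widetilde M$, and the equivariance $D(g\cdot x)=\rho(g)D(x)$ together with the $\Gamma$-invariance of $h$ makes $D^*h$ invariant under $\pi_1(M)$; it therefore descends to a metric on the closed manifold $M$. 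As $M$ is compact this descended metric is complete, whence its pullback $D^*h$ on the universal cover $\widetilde M$ is complete as well. Viewing $D$ as a local isometry $(\widetilde M,D^*h)\to(\Omega_0,h)$ with complete connected source and connected target, the covering theorem for local isometries shows that $D$ is a covering map onto $\Omega_0$, which is exactly the desired conclusion.

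The whole argument thus rests on producing a $\Gamma$-invariant Riemannian metric on $\Omega$, and this is the single point where the hypothesis $\#L(\Gamma)\geq 2$ is used. I would obtain it from the canonical metric of Kulkarni and Pinkall. In the real conformal model $\partial\mathbf H^n_{\mathbf R}=S^{n-1}$ one fixes the ${\rm PU}_{\mathbf R}(n,1)$-invariant family of round balls and defines, for $\xi\in\Omega$ and $v\in\rT_\xi\Omega$,
\[
\|v\|_\xi=\inf\bigl\{\,\|v\|^{\mathrm{hyp}}_B\;:\;\xi\in B\subset\Omega,\ B\ \text{a ball}\,\bigr\},
\]
where $\|\cdot\|^{\mathrm{hyp}}_B$ is the norm of the canonical invariant (Poincaré) metric of the ball $B$; by monotonicity the infimum is governed by the largest ball inscribed in $\Omega$ at $\xi$, which is nondegenerate precisely because the complement $L(\Gamma)$ has at least two points. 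For general $\mathbf F$ one repeats the construction with the ${\rm PU}_{\mathbf F}(n,1)$-invariant family of boundary balls of the rank one geometry, each carrying its canonical invariant metric, and forms the same infimum. The resulting $h$ depends only on $\Omega$ (equivalently on $L(\Gamma)$), so it is automatically invariant under every element of ${\rm PU}_{\mathbf F}(n,1)$ preserving $\Omega$, in particular under $\Gamma$. The main obstacle is precisely the verification that this construction survives the passage from $\mathbf F=\mathbf R$ to the complex, quaternionic and octonionic cases: one must exhibit the correct invariant family of ``balls'' and check that ${\rm PU}_{\mathbf F}(n,1)$ acts transitively on it by isometries of the canonical metrics, so that the invariance and monotonicity properties used by Kulkarni and Pinkall carry over.

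Two remarks would round off the proof. First, the role of the two-point hypothesis is genuinely the \emph{existence} of the invariant metric rather than its completeness: when $L(\Gamma)$ is a single point, $\Omega$ is the nilpotent space $\cN$, on which the dilatations forbid any invariant Riemannian metric, and this is exactly the radiant situation treated separately through theorem \ref{thm-nilpotent}. Second, completeness of the developing image is not an extra hypothesis to be checked but an output: the covering theorem applied to the complete source $(\widetilde M,D^*h)$ forces the target component $\Omega_0$ to be complete for $h$, so there is no circularity. It remains only to record the standard facts that a Riemannian metric on a compact manifold is complete, that the Riemannian universal cover of a complete manifold is complete, and that a local isometry from a complete connected manifold to a connected Riemannian manifold is a covering map — each of which is routine.
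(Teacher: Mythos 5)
Your overall architecture is exactly that of the proof the paper invokes (Kulkarni--Pinkall, Theorem 4.2): build a canonical $\Gamma$-invariant metric on $\partial\mathbf H^n_{\mathbf F}-L(\Gamma)$, pull it back by $D$, use compactness of $M$ to get completeness upstairs, and conclude by the covering theorem for local isometries out of a complete manifold. That skeleton is sound, and your remark that completeness of the target component is an output rather than an input is correct.

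The genuine gap is the step you yourself flag, and it is worse than a routine verification: the recipe ``repeat the construction with the ${\rm PU}_{\mathbf F}(n,1)$-invariant family of boundary balls, each carrying its canonical invariant metric'' does not transplant to $\mathbf F\neq\mathbf R$. In the real case a round ball $B\subset S^{n-1}$ is the shadow at infinity of a totally geodesic half-space, so its stabilizer acts transitively on $B$ and the Poincar\'e metric is forced; this is what makes the infimum both well defined and Möbius-invariant. For $\mathbf F=\mathbf C$ (and a fortiori in the quaternionic and octonionic cases) there are no totally geodesic real hypersurfaces in $\mathbf H^n_{\mathbf F}$ for $n\geq 2$, the candidate separating hypersurfaces at infinity (spinal spheres, boundaries of bisectors) are not homogeneous under their stabilizers, and the complementary ``balls'' carry no canonical invariant metric. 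So the object your infimum ranges over does not exist as described. To close the gap one must replace the family of balls by something that does exist uniformly in $\mathbf F$: for instance the family of two-point complements $\partial\mathbf H^n_{\mathbf F}-\{a,b\}$ with $a,b\in L(\Gamma)$ distinct, each of which carries a canonical metric invariant under the stabilizer of $\{a,b\}$ (in the model $\cN-\{0\}$ this is a cylindrical metric adapted to the $MA$-action, using that the dilatation orbits cross the unit sphere of the pseudo-norm exactly once), and then take the supremum over pairs; this is precisely where the hypothesis that $L(\Gamma)$ has at least two points enters. Alternatively one can route the invariant metric through the convex hull of $L(\Gamma)$ in $\mathbf H^n_{\mathbf F}$. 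A minor additional point: the metric so obtained is in general only continuous, so either smooth it equivariantly or note that the path-lifting argument behind the covering theorem needs only a length structure.
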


The theorem is cut into two parts given by the following propositions.
\begin{proposition}
Let $M$ be a connected closed $\left({\rm PU}_{\mathbf F}(n,1),\partial \mathbf{H}^n_{\mathbf F}\right)$-manifold. If the developing map $D$ is not surjective then it is a covering onto its image.
\end{proposition}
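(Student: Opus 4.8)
The plan is to reduce the statement to the two tools already available: Theorem~\ref{thm-nilpotent}, applied to the nilpotent similarity structure obtained by deleting a suitable boundary point, and the Cutting Lemma~\ref{lem-cutting}. Assume $D$ is not surjective and set $C = \partial \mathbf{H}^n_{\mathbf F} - D(\widetilde M)$, a nonempty, closed, $\Gamma$-invariant subset ($D(\widetilde M)$ is open, and $\rho(g)D(\widetilde M)=D(g\widetilde M)=D(\widetilde M)$). First I would record that $L(\Gamma)\neq\emptyset$: if $L(\Gamma)$ were empty then $\Gamma\subset K$ sits in a compact group, so $M$ is complete and $D$ is a covering onto the simply connected space $\partial\mathbf{H}^n_{\mathbf F}$, hence a diffeomorphism, contradicting non-surjectivity. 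The argument then splits according to the cardinality of $C$.

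If $C$ consists of a single point $\infty$, then $\Gamma$ fixes $\infty$ (it is the unique point of the $\Gamma$-invariant set $C$), so $M$ is a closed $(\Sim(\cN),\cN)$-manifold for $\cN = \partial\mathbf{H}^n_{\mathbf F}-\{\infty\}$ with $D(\widetilde M)=\cN$. By Theorem~\ref{thm-nilpotent}, $D$ is either a diffeomorphism onto $\cN$ or a covering onto $\cN$ minus a $\Gamma$-fixed point; the latter would make the image a proper subset of $\cN$, which is excluded, so $D$ is a diffeomorphism onto its image $\cN$, in particular a covering onto its image.

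If $C$ has at least two points, the third property of the limit set forces $L(\Gamma)\subset C$, so $D(\widetilde M)$ avoids $L(\Gamma)$. When $L(\Gamma)$ has at least two points the Cutting Lemma~\ref{lem-cutting} applies and exhibits $D$ as a covering onto the connected component of $\partial\mathbf{H}^n_{\mathbf F}-L(\Gamma)$ containing $D(\widetilde M)$. When $L(\Gamma)=\{\xi\}$ is a single point, $\Gamma$ fixes $\xi$, and $\xi\in L(\Gamma)\subset C$ shows that $D$ avoids $\xi$; hence $M$ is again a closed $(\Sim(\cN'),\cN')$-manifold with $\cN' = \partial\mathbf{H}^n_{\mathbf F}-\{\xi\}$. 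Since $|C|\geq 2$, the image $D(\widetilde M)$ is a proper subset of $\cN'$, so $D$ is not a diffeomorphism, and Theorem~\ref{thm-nilpotent} delivers a $\Gamma$-fixed point $a\in\cN'$ with $D$ a covering onto $\cN'-\{a\}$, once more a covering onto its image.

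The one genuinely delicate point is the bookkeeping that guarantees, in each case, that the developing map really avoids the point one wishes to delete: this is exactly what the inclusion $L(\Gamma)\subset C$ (valid as soon as $|C|\geq 2$) provides, and it is what lets us promote $M$ to a nilpotent similarity manifold and invoke Theorem~\ref{thm-nilpotent}. The remaining verifications—that the completeness criterion rules out $L(\Gamma)=\emptyset$, and that surjectivity onto a punctured boundary is incompatible with the ``covering onto a once-punctured $\cN$'' alternative—are routine once the cases are organized this way.
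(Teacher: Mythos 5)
Your proposal is correct and follows essentially the same route as the paper: set $C=\partial\mathbf{H}^n_{\mathbf F}-D(\widetilde M)$, split on whether $C$ is a single point or larger, use the inclusion $L(\Gamma)\subset C$ to reduce to either theorem \ref{thm-nilpotent} (when $L(\Gamma)$ has at most one point) or the cutting lemma \ref{lem-cutting} (when it has at least two). Your extra bookkeeping (ruling out $L(\Gamma)=\emptyset$ upfront, and checking non-surjectivity onto $\cN'$ before invoking theorem \ref{thm-nilpotent}) only makes explicit what the paper leaves implicit.
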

\begin{proof}
Let $\Omega = \partial \mathbf{H}^n_{\mathbf F}-D(\widetilde M)$. Then by hypothesis $\Omega\neq\emptyset$. Let $\Gamma$ be the holonomy group $\rho(\pi_1(M))$. By equivariance of the holonomy morphism and the developing map, $\Omega$ is $\Gamma$-invariant and is also  closed.
\begin{itemize}
\item If $\Omega=\{a\}$ then we can suppose that $a=\infty$ and then $\Gamma\subset \Sim(\cN)$. We also naturally have $D(\widetilde M)\subset \cN$. Therefore $M$ has a nilpotent similarity structure $(\Sim(\cN),\cN)$. Since $M$ is closed and since the developing map is surjective, this structure must be complete by theorem \ref{thm-nilpotent}.
\item Suppose that $\{a\}\subsetneq \Omega$. Then $L(\Gamma)\subset \Omega$. If $L(\Gamma)=\emptyset$, then $M$ is a spherical manifold $(K,\partial \mathbf{H}^n_{\mathbf F})$ and must be complete, absurd since $\Omega\neq \emptyset$. If $L(\Gamma) = \{b\}$, then again $M$ is a similarity manifold and by theorem \ref{thm-nilpotent}, $D$ is a covering map onto its image. If $L(\Gamma)$ consists of at least two points, then by the cutting lemma \ref{lem-cutting}, the developing map is a covering onto its image.
\end{itemize}
This shows the proposition.
\end{proof}
\begin{proposition}
Let $M$ be a connected closed $\left({\rm PU}_{\mathbf F}(n,1),\partial \mathbf{H}^n_{\mathbf F}\right)$-manifold.  The developing map $D$ is a covering on its image if, and only if, $D(\widetilde M)$ is equal to a connected component of $\partial \mathbf{H}^n_{\mathbf F}-L(\Gamma)$, where $L(\Gamma)$ denotes the limit set of the holonomy group $\Gamma=\rho(\pi_1(M))$.
\end{proposition}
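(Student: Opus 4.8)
The plan is to prove the two implications separately, leaning on the first proposition of this section (non-surjectivity already forces $D$ to be a covering onto its image), on the cutting lemma \ref{lem-cutting}, on theorem \ref{thm-nilpotent}, and on the elementary properties of the limit set collected above. For the easy implication, assume $D(\widetilde M)$ is a connected component of $\partial \mathbf{H}^n_{\mathbf F} - L(\Gamma)$. If $L(\Gamma) = \emptyset$, then $\partial \mathbf{H}^n_{\mathbf F}$ is connected, so $D(\widetilde M) = \partial \mathbf{H}^n_{\mathbf F}$; by the observation following the limit-set lemma, $\Gamma \subset K$ and $M$ is a complete spherical manifold, so $D$ is a covering. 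If $L(\Gamma) \neq \emptyset$, then $D(\widetilde M)$ misses at least one point, $D$ is not surjective, and the first proposition gives directly that $D$ is a covering onto its image.

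For the converse, assume $D$ is a covering onto $\Omega' = D(\widetilde M)$. I would first treat the surjective case $\Omega' = \partial \mathbf{H}^n_{\mathbf F}$: here the structure is complete, so the simply connected $\widetilde M$ is a covering of the compact model $\partial \mathbf{H}^n_{\mathbf F}$ and is therefore its universal cover; hence $\pi_1(M)$ is finite, $\Gamma$ is a finite subgroup of ${\rm PU}_{\mathbf F}(n,1)$, which must fix a point in the negatively curved symmetric space $\mathbf{H}^n_{\mathbf F}$ (circumcenter of a finite orbit), so $\Gamma \subset K$ and $L(\Gamma) = \emptyset$. Thus $\Omega' = \partial \mathbf{H}^n_{\mathbf F}$ is the unique connected component of $\partial \mathbf{H}^n_{\mathbf F} - L(\Gamma)$.

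Next suppose $\Omega' \neq \partial \mathbf{H}^n_{\mathbf F}$, so that $\Omega = \partial \mathbf{H}^n_{\mathbf F} - \Omega'$ is a nonempty, closed, $\Gamma$-invariant set; since $D$ is not surjective the structure is not complete, whence $L(\Gamma) \neq \emptyset$ (otherwise $M$ would be spherical, hence complete). The core of the argument, and the only place I expect real subtlety, is to reconcile the missing set $\Omega$ with the dynamically defined limit set $L(\Gamma)$. If $L(\Gamma)$ has at least two points, I would first rule out $\Omega = \{a\}$: a single missing point makes $M$ a complete similarity manifold with $L(\Gamma) = \{a\}$, contradicting $|L(\Gamma)| \geq 2$. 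So $\Omega$ has at least two points, the third property of the limit set above gives $L(\Gamma) \subset \Omega$, i.e. $\Omega'$ avoids $L(\Gamma)$, and the cutting lemma \ref{lem-cutting} concludes that $\Omega'$ is a connected component of $\partial \mathbf{H}^n_{\mathbf F} - L(\Gamma)$.

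It remains to treat $L(\Gamma) = \{b\}$ a single point. I would place $b$ at $\infty$, so that $\Gamma \subset \Sim(\cN)$ and $M$ is a similarity manifold, and invoke theorem \ref{thm-nilpotent}. Either $D$ is a diffeomorphism onto $\cN = \partial \mathbf{H}^n_{\mathbf F} - \{b\}$, in which case $\Omega' = \partial \mathbf{H}^n_{\mathbf F} - L(\Gamma)$ is connected and is itself the desired component; or $D$ fixes a further point $a \in \cN$ and covers $\cN - \{a\}$, in which case $\Gamma$ fixes both $a$ and $b$ and, being radiant, contains a nontrivial dilation, i.e. a loxodromic element with fixed points $a$ and $b$, forcing $\{a,b\} \subset L(\Gamma)$ and contradicting $|L(\Gamma)| = 1$. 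This excludes the radiant option and completes the case analysis. The hard part is precisely this bookkeeping between $\Omega$ and $L(\Gamma)$ in the low-dimensional limit-set cases; once the cutting lemma and theorem \ref{thm-nilpotent} are available, each branch closes quickly.
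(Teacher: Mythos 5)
Your proof is correct and follows essentially the same route as the paper's: a case analysis on the size of $L(\Gamma)$ (empty, a single point, at least two points), handled respectively by sphericity/completeness, theorem \ref{thm-nilpotent}, and the cutting lemma \ref{lem-cutting}. The only notable differences are that you streamline the ``if'' direction by invoking the preceding proposition directly once $L(\Gamma)\neq\emptyset$, and that you spell out the loxodromic fixed-point argument showing that the radiant case forces $L(\Gamma)$ to contain two points --- a step the paper leaves implicit.
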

\begin{proof}
Suppose that the developing map $D$ is a covering onto its image.
\begin{itemize}
\item If $D$ is a covering onto $\partial \mathbf{H}^n_{\mathbf F}$ or $\partial \mathbf{H}^n_{\mathbf F}-\{a\}$, then since both are simply connected, $D$ is a diffeomorphism. Therefore, the holonomy group $\Gamma$ is discrete and $D$ must avoid $L(\Gamma)$ since it is a covering onto its image. If $D$ is a covering onto $\partial \mathbf{H}^n_{\mathbf F}$ then $L(\Gamma)=\emptyset$ and $M$ is spherical. If $D$ is a covering onto $\partial \mathbf{H}^n_{\mathbf F}-\{a\}$ then $L(\Gamma)=\{a\}$ (otherwise $M$ would be spherical and complete).
\item If $D$ avoids at least two points, then this complement is closed and invariant and therefore contains $L(\Gamma)$. Also, $L(\Gamma)$ must contain at least two points, since otherwise $D$ would be a covering onto $\partial \mathbf H^n_{\mathbf F}$ or $\partial \mathbf{H}^n_{\mathbf F}-\{a\}$ by theorem \ref{thm-nilpotent}. By the cutting lemma \ref{lem-cutting}, $D$ is a covering onto a connected component of $\partial \mathbf{H}^n_{\mathbf F}-L(\Gamma)$.
\end{itemize}
Now we suppose that $D$ has its image equal to a connected component of $\partial \mathbf{H}^n_{\mathbf F}-L(\Gamma)$.
\begin{itemize}
\item If $L(\Gamma)=\emptyset$ then $M$ is spherical and $D$ is a covering map.
\item If $L(\Gamma)=\{a\}$ then $M$ is a nilpotent similarity manifold, and by hypothesis and theorem \ref{thm-nilpotent}, $D$ is a covering onto its image.
\item If $L(\Gamma)$ has at least two points, then by the cutting lemma \ref{lem-cutting}, the developing map is a covering onto its image.
\end{itemize}
It concludes the proof.
\end{proof}

It also concludes the proof of theorem \ref{thm-rank1}.\qed

\printbibliography

\end{document}